\title{Regular Polyhedra of Index Two, II}
\author{Anthony M. Cutler\\
Northeastern University\\
Boston, MA 02115, USA\\
{\small amcutler@msn.com}}
\date{ }
\newtheorem{lemma}{Lemma}[section] 
\newtheorem{theorem}[lemma]{Theorem}
\theoremstyle{definition}
\newtheorem*{acknowledgement}{Acknowledgement}
\renewcommand{\Gamma}{\varGamma} 
\renewcommand{\epsilon}{\varepsilon}
\renewcommand{\geq}{\geqslant} 
\newcommand{\E}{\mathbb{E}^3}
\begin{document}

\maketitle

\begin{abstract}
\noindent
A polyhedron in Euclidean $3$-space $\E$ is called a \textit{regular polyhedron of index $2$} if it is combinatorially regular and its geometric symmetry group has index $2$ in its combinatorial automorphism group; thus its automorphism group is flag-transitive but its symmetry group has two flag orbits. The present paper completes the classification of finite regular polyhedra of index $2$ in $\E$. In particular, this paper enumerates the regular polyhedra of index $2$ with vertices on one orbit under the symmetry group. There are ten such polyhedra.\\
\\
{\it Key words.} ~ Regular polyhedron, Kepler-Poinsot polyhedra, Archimedean polyhedra, face-transitivity, regular maps on surfaces, abstract polytopes, face shape, non-Petrie duality.\\[.02in]
{\it MSC 2000.} ~ Primary: 51M20.  Secondary: 52B15.
\end{abstract}

\section{Introduction}
\label{intro}

The definition of a finite regular polyhedron in Euclidean $3$-space has changed over time, and as a result so has the number of such polyhedra, as Coxeter~\cite{cox} and Gr\"unbaum~\cite{grsame}, among other authors, have pointed out. Kepler realized that some star polyhedra, built by stellation from the dodecahedron or icosahedron, are regular if the convexity requirement is dropped. Poinsot later rediscovered Kepler's two polyhedra, and used star vertex-figures to construct the remaining two regular star polyhedra. Thus as the notion of a face (or vertex-figure) was expanded to include intersecting boundary edges, the four Kepler-Poinsot polyhedra joined the five classical Platonic solids as regular polyhedra.

In the 1920's and 1930's, Coxeter and Petrie~\cite{cox,crsp} discovered that there are additional regular polyhedra if planarity is removed as a requirement of vertex-figures; these polyhedra, known as the Petrie-Coxeter polyhedra, are infinite and have convex faces but skew vertex-figures. Petrie also was the first to observe that the family of `Petrie polygons' of a regular map (cell-decomposition) on a closed surface (real $2$-manifold) usually forms another regular map on another closed surface, called the `Petrie dual' of the original map; here, a Petrie polygon is a path along edges such that every two, but no three, consecutive edges lie in the same face. In the 1970's, Gr\"unbaum~\cite{gr1} restored the symmetry in the definition by admitting the possibility that a polyhedron has skew faces. Thus a polyhedron became a geometric graph in space equipped with a distinguished (connected) family of simple edge-cycles, called faces, independent of whether or not topological discs could be spanned into these cycles to result in a surface free of self-intersections. As the generally accepted definition of regularity changed, the Petrie duals of the known regular polyhedra were recognized as regular polyhedra, bringing the number of finite regular polyhedra to eighteen (see \cite{ms1} and \cite[\S 7E]{arp}). This graph-theoretic approach to polyhedral structures and symmetry has driven many recent developments. Gr\"unbaum~\cite{gr1} began, and Dress~\cite{d1,d2} completed, the classification of the forty-eight infinite regular polyhedra with non-planar polygons (apeirogons) as faces. Schulte~\cite{chi1,chi2} has shown that discrete chiral polyhedra in Euclidean $3$-space belong to six families, three with finite faces and three with infinite helical faces. McMullen~\cite{pm1,pm,pm2,ms3} has obtained classification results for polytopes in higher dimensions. Classification results have also been obtained for polyhedra with specific symmetry properties; see \cite{far,grhol,mw5,ps}, and Martini \cite{mar1,mar2} for surveys of convex polyhedra.

These changes in perception of a polyhedron emphasized the combinatorial aspects of the structure over geometric ones, and went hand-in-hand with the gradual change in defining the term `regular', eventually leading to the modern definition in terms of a flag-transitive symmetry group. The combinatorial structure of a finite polyhedron can be represented by a finite map, which is a cell-decomposition of a closed compact surface into topological polygons. Coxeter~\cite{cox} and Coxeter-Moser~\cite{cm} listed all regular maps of genus less than $3$. This was extended by Sherk~\cite{she} and Garbe~\cite{gar}, who classified the orientable maps of genus $3$, and $4, 5,$ and $6$, respectively. Subsequently, Conder and Dobesanyi~\cite{codo} using computer search expanded this list to regular maps of genus up to $15$, and more recently Conder~\cite{con} has listed all regular maps of genus up to $100$ in the orientable case, and $200$ in the nonorientable case.

Less is known about the geometric realizations of these combinatoric structures. Not every regular map has a polyhedral realization. Additionally, two or more geometrically dissimilar polyhedra may be realized from the same regular map. Every geometric polyhedron realizing a regular map has associated with it the (geometric) symmetry group, which can be viewed as a subgroup of the (combinatorial) automorphism group of the underlying regular map. When the index of this subgroup is finite, it is called the index of the polyhedron. The $18$ polyhedra mentioned above are the only finite regular polyhedra of index $1$. Wills~\cite{wills} showed that there are precisely five orientable finite regular polyhedra of index $2$ with planar faces. For figures of these, see Richter~\cite{rich}, and also \cite[Figs. 4.4a, 6.4c]{cox}, \cite[Figs. 45, 53]{clm}, \cite[$De_{2}f_{2}$ (Plate XI), $Ef_{1}g_{1}$ (Plate IX)]{cdfp}, \cite[Fig. 26, Tafel VIII]{bru} and \cite{sw}. These polyhedra are described in \cite{bru}, \cite[\S 6.4]{cox} and \cite{gor}, or can be obtained from those by polarity (see \cite{gsh}). E. Schulte and the author \cite{cs} found all finite regular polyhedra of index $2$ with vertices in two orbits under the symmetry group. There are twenty-two families of such polyhedra, where polyhedra are in the same family if they differ only in the relative diameters of their vertex orbits. Each of these polyhedra is orientable with a face-transitive symmetry group, but only two have planar faces. No finite regular polyhedron of index $2$ can be chiral, although Schulte~\cite{chi1,chi2} has shown that there are six classes of infinite regular polyhedra of index $2$ which are chiral.

The main result of this paper is to complete the classification of finite regular polyhedra of index $2$, regardless of orientability or planarity of faces or vertex-figures, by finding all such polyhedra with vertices in one orbit under the symmetry group. There are precisely ten such polyhedra. This paper, which, like its predecessor \cite{cs}, is based on \cite{cut}, is organized as follows. Section~\ref{mapo} sets out definitions and basic properties of maps and regular polyhedra. Section~\ref{vedgs} establishes constraints on the vertex positions and edge lengths of regular polyhedra of index $2$. In Section~\ref{fshap}, some results concerning `face shape', a concept introduced in \cite{cs}, are obtained. All regular polyhedra of index $2$ with edges of different lengths are found in Section~\ref{difel}. All index $2$ polyhedra with one vertex orbit and with edges of the same length are found in Sections~\ref{eopt}, \ref{dirty}, and \ref{bicty}. Finally, a non-Petrie duality that applies to all regular polyhedra of index $2$ is described in Section~\ref{npdua}.

\section{Maps and Polyhedra}
\label{mapo}

Coxeter-Moser~\cite{cm} defines a map $M$ as a decomposition of a closed $2$-manifold into non-overlapping, simply-connected regions by means of arcs. The regions are called the faces of $M$, the arcs are called the edges of $M$, and the intersections of the arcs are called the vertices of $M$. Thus each edge is incident with precisely two vertices, one at each end, and subtends precisely two faces. In this way, the vertices, edges, and faces of a map form a partially ordered set, which usually is identified with the underlying map. If $M$ is finite, the underlying surface will be compact.

The cyclically ordered set of edges bordering each face is called the boundary of that face. The f-vector of $M$ is the triplet $(f_{0}, f_{1}, f_{2})$, where $f_{i}$ is the number of vertices, edges, and faces, respectively. We always assume that each of these is finite. A flag of $M$ is a set consisting of one vertex, one edge incident with this vertex, and one face containing this edge. The map is said to be of type $\{p, q\}$ if each of its faces is a topological $p$-gon (so has a boundary consisting of $p$ edges), and if $q$ edges meet at each vertex.

The automorphism group of $M$ is the group of incidence preserving bijections of the partially ordered set of vertices, edges, and faces of $M$, and is denoted by $\Gamma(M)$. Following McMullen and Schulte~\cite{arp}, we say that a map $M$ is (combinatorially) regular if it is flag-transitive. Thus a regular map is reflexible in the sense of \cite{cm}. The automorphism group $\Gamma(M)$ of a regular map $M$ is generated by three involutions, $\rho_{0}$, $\rho_{1}$, and $\rho_{2}$. For a fixed flag, consisting of a vertex $F_{0}$, an edge $F_{1}$, and a face $F_{2}$, these generators can be defined by
$\rho_{i}(F_{j})=F_{j}$ if and only if $i \neq j$, for $i,j = 0, 1, 2$.

These generators satisfy the standard relations
\begin{equation}
\label{relone}
\rho_{0}^{2} = \rho_{1}^{2} = \rho_{2}^{2} =
(\rho_{0}\rho_{1})^{p} = (\rho_{1}\rho_{2})^{q} = (\rho_{0}\rho_{2})^{2} = 1,
\end{equation}
but in general satisfy other independent relations also.

The automorphism $\sigma_{1}:=\rho_{0}\rho_{1}$ cyclically permutes the successive edges on the boundary of the face $F_{2}$, and the automorphism $\sigma_{2}:=\rho_{1}\rho_{2}$ cyclically permutes the successive edges meeting at the vertex $F_{0}$ of this face. The elements $\sigma_{1}$ and $\sigma_{2}$ generate the combinatorial rotation subgroup $\Gamma^{+}(M)$ of $\Gamma(M)$, consisting of the automorphisms that can be expressed as products of an even number of generators from $\rho_{0}$, $\rho_{1}$, and $\rho_{2}$.

Every map $M$, with f-vector $(f_{0}, f_{1}, f_{2})$, has associated with it a dual map $M^{*}$ with f-vector $(f_{2}, f_{1}, f_{0})$, with one vertex of $M^{*}$ lying in each face of $M$, and vice versa, and with each edge of $M^{*}$ crossing one edge of $M$. Thus if $M$ is of type $\{p, q\}$ then $M^{*}$ is of type $\{q, p\}$. Additionally, if $M$ is regular then it also has associated with it a polygon called the Petrie polygon. Given $M$, a Petrie polygon is a `zigzag' along the edges of $M$ such that every two but no three successive edges of the polygon are edges of a single face of $M$. Because of the flag-transitivity of $\Gamma(M)$, the Petrie polygons of $M$ are all combinatorially equivalent under $\Gamma(M)$, so we may talk about the Petrie polygon of $M$.

Identifying those pairs of vertices of the regular tessellation $\{p, q\}$ on the $2$-sphere or in the Euclidean or hyperbolic plane which are separated by $r$ steps along a Petrie polygon of the tessellation gives, for suitable values of $r$, a finite regular map $M$ denoted by $\{p, q\}_{r}$. The value of $r$ gives us one further relation to complement those in (\ref{relone}),		$(\rho_{0}\rho_{1}\rho_{2})^{r} = 1$. The map $\{p, q\}_{r}$ then has Petrie polygons of length $r$.

If we leave the vertices and edges of the regular map $M$ of type $\{p, q\}_{r}$ in place, and replace the faces by the Petrie polygons, we get a new map of type $\{r, q\}_{p}$ with the same automorphism group, but not necessarily on the same surface. Thus there is one common automorphism group for six related regular maps of type $\{p, q\}_{r}$, $\{p, r\}_{q}$, $\{q, p\}_{r}$, $\{q, r\}_{p}$, $\{r, p\}_{q}$, and $\{r, q\}_{p}$.	(See \cite{cm},\cite{ms1},\cite{arp},\cite{wills}.)

For a regular map, we also have the combinatorial identities $qf_{0} = 2f_{1}$, and $pf_{2} = 2f_{1}$, which can be obtained by counting edge `ends' and edge `sides', respectively.

A polyhedron, $P$, is a faithful geometric realization in Euclidean $3$-space, $\E$, of a map, $M$ (see \cite[Ch. 5]{arp}). A face of $P$ is determined by its boundary, which is a closed simple sequence of edges of $P$. Thus the vertices, edges, and faces of $P$ are faithfully represented by points, line segments, and closed simple sequences of edges; that is, there exist bijections between the sets of vertices, edges, and faces of $M$ and the sets of vertices, edges, and faces of $P$, respectively. If $M$ is a regular map, then $P$ is called a combinatorially regular polyhedron. The boundary of a face of a combinatorially regular polyhedron need not be planar or convex.

A consequence of the definition given above of a combinatorially regular polyhedron, $P$, as a faithful realization of a regular map, $M$, through bijections between $M$ and $P$, is that no face of $P$ has a vertex occurring more than once in its boundary.

According to our definition, a polyhedron $P$ is a geometric graph in space with a distinguished family of edge cycles, the faces of $P$. The faces, as noted, need not be planar. The polyhedral realizations that we consider do not include realizations of regular maps where more than one edge joins two vertices or a vertex lies on only two edges. Specifically, we require that $p \geq 3$ and $q \geq 3$, for maps of type $\{p, q\}$, in order to achieve non-degenerate geometric realization in $\E$. A consequence of this constraint is that it is not the case for any combinatorially regular polyhedron that two faces share two consecutive edges in their boundaries.

The geometric symmetry group, $G(P)$, of $P$ consists of the isometries of $\E$ that map $P$ to itself. This is the group of rotations and (point or plane) reflections that preserve $P$, and can be viewed as a subgroup of $\Gamma(M)$ = $\Gamma(P)$, when we have identified $M$ and $P$. The index of $P$ is defined to be the index of this subgroup in $\Gamma(P)$. Thus a `regular polyhedron of index $2$' is a combinatorially regular, geometric polyhedron whose symmetry group, $G(P)$, has index $2$ in the automorphism group, $\Gamma(M)$. A consequence of this is that a regular polyhedron of index $2$ has precisely two flag orbits under $G(P)$.
An important subgroup of $G(P)$ is $G^{+}(P)$, the group of rotations that preserve $P$. If $P$ is non-orientable, the rotation group, $G^{+}(P)$, is the same as the symmetry group, $G(P)$; but if $P$ is orientable, then $G^{+}(P)$ is a subgroup of $G(P)$ of index $2$.

\section{Vertices and Edges}
\label{vedgs}

Throughout we assume that $P$ is a regular polyhedron of index $2$ whose vertices lie on one orbit under $G(P)$, the symmetry group of $P$. We take $P$ to be of type $\{p_{P}, q_{P}\}$, and to have f-vector $(f_{0}, f_{1}, f_{2})$, so that $f_{0}$, $f_{1}$, and $f_{2}$ are the number of vertices, edges, and faces, respectively, of $P$.

By Theorems 4.1 of \cite{cs} and 3.1 of \cite{cut} we know that $G(P)$ is the full symmetry group of a Platonic solid, and by Lemma 4.4 of \cite{cs} the vertices of $P$ coincide with the vertices of $S$, where $S$ is either a Platonic solid, a truncated Platonic solid (that is, an octahedron occurring as a truncated tetrahedron), a cuboctahedron, or an icosidodecahedron. Throughout $S = S(P)$ will refer to this object. We let $q_{S}$ be the number of edges of $S$ that end at any specified vertex of $S$, so that $q_{S}$ is $3$, $4$, or $5$.

Up to similarity, $P$ is metrically unique. Without loss of generality, we fix the size of $P$ by taking the edge length of $S$ to be $1$.

\begin{lemma}
\label{lem1}
Let $P$ be a regular polyhedron of index $2$ with its vertices on one orbit, and let $S$ be the convex hull of its vertices. Then the symmetry group, $G(P)$, of $P$ is the full symmetry group, $G(S)$, of $S$.
\end{lemma}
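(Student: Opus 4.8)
The plan is to establish the two inclusions $G(P)\subseteq G(S)$ and $G(S)\subseteq G(P)$ separately. The first is immediate and requires no hypotheses beyond the definitions: any $\gamma\in G(P)$ is an isometry of $\E$ carrying $P$ onto itself, so it permutes the vertex set of $P$. Since $S$ is by definition the convex hull of exactly these vertices, $\gamma$ maps $S$ onto $S$, so $\gamma\in G(S)$. Thus $G(P)\leq G(S)$. Note also that $S$ is a convex polyhedron, so $G(S)$ is finite, and that both $G(P)$ and $G(S)$ fix the centroid of the common vertex set; hence both may be regarded as finite point groups about that centre. It remains to show that $G(S)$ is no larger than $G(P)$.

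For the reverse inclusion I would exploit the structural input already available: we are told that $G(P)$ is the full symmetry group of a Platonic solid, so $G(P)$ is one of the three reflection groups $T_d$, $O_h$, $I_h$ (full tetrahedral, octahedral, icosahedral), and $G(S)$ is a finite point group containing it. The key general fact is that $O_h$ and $I_h$ are \emph{maximal} among the finite groups of isometries fixing a point. Consequently, if $G(P)$ is octahedral or icosahedral, then the chain $G(P)\leq G(S)$ with $G(S)$ finite forces $G(S)=G(P)$, and the lemma holds in these two cases with no further work.

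The remaining case $G(P)=T_d$ is the main obstacle, precisely because $T_d$ is \emph{not} maximal: its only finite overgroup among the point groups is $O_h$ (it cannot lie in $I_h$, since $I_h$ has no $4$-fold axes to accommodate the order-$4$ rotoreflections of $T_d$). So the whole difficulty reduces to excluding $G(S)=O_h$, i.e.\ to ruling out that the hull $S$ is a regular octahedron or a cube. Here I would combine the counting identity $|G(P)|=2f_{1}$ (which follows from index $2$ together with $|\Gamma(M)|=4f_{1}$) with the transitivity of $G(P)$ on the vertices. If the single vertex orbit were the six vertices of a regular octahedron, then $f_1=12$, and the edges of $P$ — being a $T_d$-invariant set of $12$ of the $15$ available chords — would have to be exactly the twelve edges of the octahedron; but the only regular polyhedra supported on that vertex-and-edge graph are the octahedron $\{3,4\}$ and its Petrie dual $\{6,4\}_{3}$, each of which is centrally symmetric and hence has geometric symmetry group $O_h$, making it of index $1$ and contradicting the hypothesis. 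The same argument, applied to the cube graph and its Petrial, excludes $S$ being a cube. Thus when $G(P)=T_d$ the hull $S$ must be the truncated tetrahedron among the admissible shapes, for which $G(S)=T_d=G(P)$, and the proof is complete. The crux throughout is this final exclusion: demonstrating that a single vertex orbit cannot covertly carry the extra octahedral symmetry without forcing $P$ to collapse to an index-$1$ polyhedron.
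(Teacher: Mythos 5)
Your opening inclusion $G(P)\leq G(S)$, the maximality argument disposing of the octahedral and icosahedral cases, and your treatment of the octahedral hull are all sound; indeed the octahedron exclusion is essentially the paper's own key step (the edge count $f_1=|G(P)|/2=12$ forces the edges of $P$ to be the twelve octahedron edges, whence $P$ is the octahedron or its Petrie dual, both of index $1$ --- though note you assert, where the paper argues, that no other regular polyhedron lives on that graph). The genuine gap is the reduction sentence: ``excluding $G(S)=O_h$, i.e.\ ruling out that the hull $S$ is a regular octahedron or a cube.'' That equivalence is unjustified and, as stated, false. A single $T_d$-orbit whose convex hull has full octahedral symmetry need not be the vertex set of an octahedron or a cube: the $12$ vertices of a cuboctahedron form a single $T_d$-orbit (even a single orbit under the tetrahedral rotation group), and the $24$ vertices of a truncated octahedron form a single $T_d$-orbit as well; in both cases $G(S)=O_h$. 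These are exactly the configurations that maximality does not touch, and you never exclude them. The repair uses tools you quote but do not deploy here: $q_P=2f_1/f_0=|G(P)|/f_0\geq 3$ gives $f_0\leq 8$, and the possible sizes of a $T_d$-orbit are $1,4,6,12,24$ (a point whose stabilizer contains a $3$-fold rotation automatically picks up the mirrors through that axis, so no orbit of size $8$ exists); hence $f_0\in\{4,6\}$, and only $f_0=6$, the octahedron, is dangerous. This counting is in effect what lies behind the paper's appeal to Lemma 4.4 of \cite{cs}, whose derivation ties each admissible hull to its group, so that a cuboctahedron can occur only with octahedral symmetry.

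Two lesser points. The same orbit-size computation shows your cube case is vacuous: a cube's $8$ vertices are never one $T_d$-orbit (they split into the two inscribed tetrahedra), which is the paper's one-line disposal of that case; your detour through the cube graph and its Petrial argues about a configuration that cannot occur under the one-orbit hypothesis. Finally, your closing identification of the hull as ``the truncated tetrahedron'' is confused: in the paper's terminology the admissible truncated Platonic solid \emph{is} the octahedron viewed as a truncated tetrahedron (which you have just excluded), while the Archimedean truncated tetrahedron has $12$ vertices and would give $q_P=2$. Fortunately nothing is needed there: once $G(S)=O_h$ is excluded, your own maximality observation (the only finite point group properly containing $T_d$ is $O_h$) already yields $G(S)=T_d=G(P)$, which is all the lemma asserts.
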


\begin{proof}
We first eliminate the case that $G(P)$ is a proper subgroup of $G(S)$, which would occur exactly when $G(P)$ is the full tetrahedral group and $S$ is the octahedron, viewed as a truncated tetrahedron. Suppose that $P$ has tetrahedral symmetry and the vertices of $P$ are on the midpoints of the edges of a tetrahedron. In this case $S$ is an octahedron; $P$ has $12$ edges, since $|G(P)| = 24$; and $q_{P} = 2f_{1} / f_{0} = 4$. Consequently, since at least three of the four edges at a vertex must be edges of $S$, the symmetry also forces the fourth edge to be an edge of $S$. Thus all edges of $P$ must coincide with those of $S$. Since $P$ is a polyhedron, adjacent edges of any face of $P$ can not also be adjacent edges of any other face of $P$, for this would give $q_{P} = 2$. It follows that in this case, adjacent edges of any face of $P$ must be adjacent edges of a face of $S$, allowing only two possibilities for the type of faces of $P$, namely triangles or skew hexagons. Thus $P$ can only be a regular octahedron or the Petrie dual of a regular octahedron, and so is not an index $2$ polyhedron with tetrahedral symmetry.

Further, if $S$ is a cube then $P$ can not have tetrahedral symmetry as that would require two vertex orbits, not one. Thus we have that if $S$ is a tetrahedron, $P$ has tetrahedral symmetry; if $S$ is an octahedron or a cube, $P$ has octahedral symmetry; and in all other cases, $P$ has icosahedral symmetry. The lemma then follows from Theorem 4.1 of \cite{cs}.
\end{proof}

\begin{lemma}
\label{lem2}
Let $P$ be a regular polyhedron of index $2$ with its vertices on one orbit, and let $S$ be the convex hull of its vertices. Then $S$ can not be a tetrahedron or an octahedron. If $S$ is a cube or a dodecahedron, $q_{P} = 6$. If $S$ is an icosahedron, $q_{P} = 10$. If $S$ is a cuboctahedron or an icosidodecahedron, $q_{P} = 4$.
\end{lemma}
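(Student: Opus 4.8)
The plan is to reduce the whole statement to a single counting identity for $q_{P}$ and then to examine each candidate $S$ in turn. First I would compute the order of $\Gamma(P)$. Since $P$ realizes a regular map, its automorphism group acts simply transitively on flags, and counting flags by faces or by vertices gives $|\Gamma(P)| = 4f_{1} = 2q_{P}f_{0}$. Because $P$ has index $2$ and $G(P) = G(S)$ by Lemma~\ref{lem1}, we also have $|\Gamma(P)| = 2|G(P)| = 2|G(S)|$. Combining these, $q_{P}f_{0} = |G(S)|$, so
\[
q_{P} = \frac{|G(S)|}{f_{0}}.
\]
Since the vertices of $P$ form a single $G(S)$-orbit, $f_{0} = |G(S)|/|\mathrm{Stab}_{G(S)}(v)|$ for a vertex $v$, so equivalently $q_{P}$ is just the order of the stabilizer of a vertex of $S$ in $G(S)$.

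The second step is to evaluate this quotient for each $S$ permitted by the discussion preceding the lemma, namely the five Platonic solids, the octahedron viewed as a truncated tetrahedron, the cuboctahedron, and the icosidodecahedron. Here the one point requiring care is the octahedron-as-truncated-tetrahedron case: Lemma~\ref{lem1} guarantees that $G(S)$ is then the \emph{full} octahedral group of order $48$, not merely the tetrahedral group, so the same value of $|G(S)|$ is used whichever way the octahedron arises. Substituting the known values of $|G(S)|$ and $f_{0}$ gives $q_{P} = 6$ for the tetrahedron, $q_{P} = 8$ for the octahedron, $q_{P} = 6$ for the cube and the dodecahedron, $q_{P} = 10$ for the icosahedron, and $q_{P} = 4$ for the cuboctahedron and the icosidodecahedron. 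For every case except the tetrahedron and octahedron this already matches the asserted value, settling those parts of the statement.

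It remains to rule out the tetrahedron and octahedron. The key observation is that $P$ is a geometric graph on the $f_{0}$ vertices of $S$, so the valence $q_{P}$ of any vertex cannot exceed $f_{0}-1$. For the tetrahedron $f_{0}-1 = 3$, yet the identity forces $q_{P} = 6$; for the octahedron $f_{0}-1 = 5$, yet $q_{P} = 8$. Both are impossible, so neither solid can occur as $S$, which establishes the first sentence of the lemma. I expect the counting identity itself to be routine; the only genuine subtlety is invoking Lemma~\ref{lem1} to fix the correct symmetry group in the truncated-tetrahedron case, after which the elementary degree bound disposes of the two excluded solids immediately.
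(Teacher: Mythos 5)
Your proof is correct and follows essentially the same route as the paper's: both count edges via the index-$2$ relation $|\Gamma(P)|=2|G(S)|$ together with flag/edge counting to get $q_{P}=2f_{1}/f_{0}=|G(S)|/f_{0}$, and both then eliminate the tetrahedron and octahedron by the degree bound $q_{P}<f_{0}$ (the paper phrases this as $q_{P}=2q_{S}=f_{0}+2$ for those two solids, while you phrase $q_{P}$ as the order of a vertex stabilizer --- a cosmetic difference only).
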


\begin{proof}
Since the symmetry group of $P$ is $G(S)$, it follows that $f_{1}$, the number of edges of $P$, is $12$ if $P$ has tetrahedral symmetry, $24$ if $P$ has octahedral symmetry, and $60$ if $P$ has icosahedral symmetry. Since $P$ has the same number of vertices as $S$, and $q_{P} = 2f_{1} / f_{0}$, we have $q_{P} = 4$ if $S$ is a cuboctahedron or an icosidodecahedron, and $q_{P} = 2q_{S}$ if $S$ is a Platonic solid. But if $S$ is a tetrahedron or an octahedron, $2q_{S} = f_{0} + 2$, which contradicts $q_{P} < f_{0}$.
\end{proof}

As in \cite{cs}, we define the length of any edge $\{v_{1},v_{2}\}$ of $P$ to be the length of the shortest path from $v_{1}$ to $v_{2}$ along edges of $S$. The length of any edge of $P$ is thus an integer. This concept of length will be modified later when $S$ is a cuboctahedron or icosidodecahedron, but for now we adopt the above definition.

\begin{lemma}
\label{lem3}
Let $P$ and $S$ be as defined above. If $P$ has edges of different lengths, there are three possible combinations of $S$ and edge lengths of $P$. These are: a) $S$ is a cube and $P$ has edges of length $1$ and $2$; b) $S$ is a dodecahedron and $P$ has edges of length $1$ and $4$; or c) $S$ is an icosahedron and $P$ has edges of length $1$ and $2$. If all edges of $P$ are the same length, then either d) $S$ is a dodecahedron and $P$ has edges of length $2$ or $3$; or e) $S$ is a cuboctahedron or an icosidodecahedron.
\end{lemma}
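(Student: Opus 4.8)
The plan is to argue locally at a single vertex. By Lemma~\ref{lem1} we have $G(P)=G(S)$, so the stabilizer $H:=\mathrm{Stab}_{G(P)}(v)$ of a vertex $v$ is the stabilizer of $v$ in the full symmetry group of $S$, of order $|G(S)|/f_{0}$. The $q_{P}$ edges of $P$ at $v$ join $v$ to $q_{P}$ other vertices of $S$; since every element of $H$ fixes $v$ and preserves $P$, it permutes these edges, so the set of their other endpoints is a union of $H$-orbits on the vertex set of $S$. As $H\le G(S)$ acts by graph automorphisms of the $1$-skeleton of $S$ fixing $v$, it preserves distance from $v$; hence each $H$-orbit lies in a single distance-shell and therefore corresponds to edges of $P$ of a single length. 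Because $G(P)$ is vertex-transitive, the lengths occurring at $v$ are exactly the edge-lengths of $P$. The problem thus reduces, for each admissible $S$ from Lemma~\ref{lem2}, to decomposing the distance-shells around $v$ into $H$-orbits and then listing every way to express $q_{P}$ as a sum of orbit sizes, reading off the lengths involved.

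First I would dispose of the three Platonic hulls. For the cube, $H\cong C_{3v}$ has order $6$, $q_{P}=6$, and the shells at distances $1,2,3$ have sizes $3,3,1$, each a single $H$-orbit; the only sum equal to $6$ is $3+3$, forcing lengths $1$ and $2$ and giving~(a). For the dodecahedron, again $H\cong C_{3v}$ and $q_{P}=6$, while the shells at distances $1,\ldots,5$ have sizes $3,6,6,3,1$. I would check that each of the two hexads is a single $H$-orbit (a reflection fixing $v$ and one neighbour interchanges the two rotation-orbits within the shell) and that the triads at distances $1$ and $4$ are single orbits related by the central inversion; the sums to $6$ are then the distance-$2$ hexad (length $2$), the distance-$3$ hexad (length $3$), or the union of the two triads (lengths $1$ and $4$), giving~(d) and~(b). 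For the icosahedron, $H\cong C_{5v}$ has order $10$, $q_{P}=10$, and the shells $1,2,3$ have sizes $5,5,1$, each a single orbit; only $5+5$ reaches $10$, forcing lengths $1$ and $2$ and giving~(c). In particular neither the cube nor the icosahedron admits a single-length configuration, so neither appears in~(d) or~(e).

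Finally I turn to the quasiregular hulls, where $q_{P}=4$ and $|H|=4$ with $H\cong C_{2}\times C_{2}$. For the cuboctahedron the shells at distances $1,2,3$ have sizes $4,6,1$: distance $1$ is a single $4$-orbit, distance $2$ splits as a $2$-orbit together with a $4$-orbit, and distance $3$ is the fixed antipode. The only subsets of orbits summing to $4$ are the distance-$1$ tetrad and the distance-$2$ tetrad, each of one length, so every such $P$ has equal edge-lengths and the cuboctahedron falls under~(e). The genuinely delicate case, which I expect to be the main obstacle, is the icosidodecahedron: it has $30$ vertices and the ordering by Euclidean distance no longer matches the ordering by graph distance, so the shell structure must be found directly. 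I would first establish the graph-distance distribution $1,4,8,8,8,1$ about $v$ (most cleanly by identifying the $1$-skeleton of the icosidodecahedron with the line graph of the dodecahedron, or else by a coordinate computation), and then decompose each shell under $H$. The crucial observation is that the only $H$-orbits of size smaller than $q_{P}=4$ are two $2$-element orbits of axis-vertices, and both lie in the single shell at distance $3$; hence the only ways to write $4$ as a sum of orbit sizes are a single $4$-orbit, lying in one shell, or the union of those two $2$-orbits, both at distance $3$. Either way the four edges at $v$ share one length, so the icosidodecahedron also falls under~(e). The part demanding real care is precisely the verification that no small orbit is hidden in a second shell, which would permit a mixed-length sum to $4$; ruling this out is what forces the icosidodecahedron (and the cuboctahedron) into the equal-length alternative rather than into~(a)--(c).
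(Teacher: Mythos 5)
Your proof is correct, and at bottom it uses the same vertex-local counting as the paper --- compare $q_P$ with the sizes of the orbits of the vertex stabilizer on the distance shells of $S$ --- but your systematic orbit decomposition buys you two things the paper's terser version does not. First, you never need the antipodal-edge exclusion (Lemma 6.1 of \cite{cs}, which the paper invokes at the outset): singleton orbits can never complete a sum to $q_P$; likewise your check that the two dodecahedral hexads are single orbits of the full stabilizer is what rigorously kills mixed sums such as $3+3$ realizing lengths $1$ and $2$, a point the paper passes over by citing shell sizes alone. Second, and more importantly, your treatment of the icosidodecahedron is more accurate than the paper's own. The paper argues that a vertex distinct from $u,u^*$ has stabilizer-orbit of size $2$ exactly when it lies on the perpendicular bisector of $u$ and $u^*$, and then asserts that ``there are only two vertices of the exceptional kind''; that count is right for the cuboctahedron but wrong for the icosidodecahedron, where the bisector plane contains four vertices forming two $2$-orbits (they are the vertices on the two $2$-fold axes orthogonal to the axis through $u$, and together with $u,u^*$ they span a regular octahedron). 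So the paper's inference that $u$ must be joined to a vertex in a $4$-orbit is unjustified as written. Your enumeration handles exactly the case the paper misses: the only sums giving $4$ are a single $4$-orbit or the union of both $2$-orbits, and since the two $2$-orbits lie in one graph-distance shell (distance $3$), the four edges still share one length, so conclusion (e) stands either way. (In the paper this overlooked configuration is effectively disposed of only later, in Lemma~\ref{lem9}, where edge length $1+d$ --- precisely an edge to one of these exceptional vertices --- is excluded because $P$ would then be a compound of five octahedra rather than a polyhedron.) Two small points to tighten in your write-up: state explicitly that the antipode's singleton orbit can never enter a sum to $q_P$, and replace the loose phrase ``axis-vertices'' by something precise, since every vertex of the icosidodecahedron lies on a $2$-fold axis.
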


\begin{proof}
By Lemma 6.1 of \cite{cs}, no edge of $P$ goes between opposite vertices of $S$. Thus the longest possible edge lengths of $P$ are $2$ if $S$ is a cube, an icosahedron or a cuboctahedron; and $4$ if $S$ is a dodecahedron, or an icosidodecahedron.

If $S$ is an icosahedron, $q_{P} = 10$, but for any given edge length there are at most $5$ vertices equidistant from any specified vertex of $P$. If $S$ is a cube or dodecahedron, $q_{P} = 6$, and there are at most $3$ vertices equidistant from any specified vertex, $v$, of $P$, unless $S$ is a dodecahedron and the edges of $P$ have length $2$ or $3$, in which case the stabilizer of $v$ in $G(S)$ has order $6$. Thus if $S$ is a Platonic solid then all edges of $P$ are the same length if and only if $S$ is a dodecahedron and the edge lengths of $P$ are either $2$ or $3$.

Suppose now that $S$ is a cuboctahedron or an icosidodecahedron, so that $q_{P} = 4$ by Lemma~\ref{lem2}. Let $u$ be a specified vertex of $P$, and let $u^{*}$ be the vertex opposite $u$. Since the stabilizer of $u$ in $G(S)$ has order $4$, the orbit of a vertex $v$ (distinct from $u, u^{*}$) under this stabilizer contains $4$ vertices, except when $v$ lies on the perpendicular bisector of $u$ and $u^{*}$, when it contains only $2$ vertices. Since $q_{P} = 4$ and there are only two vertices of the exceptional kind, $u$ must be connected by an edge of $P$ to a vertex $v$ not on the perpendicular bisector of $u$ and $u^{*}$, and thus there must be $4$ edges at $u$ equivalent to $\{u,v\}$. Thus if $S$ is a cuboctahedron or an icosidodecahedron then all edges of $P$ are the same length.
\end{proof}

\section{Face Shapes}
\label{fshap}

The notion of face shapes was introduced in \cite{cs}. If the edge lengths of a regular polyhedron, $P$, of index $2$ are known, then any face, $F$, of $P$ can be specified by four symbols $[a,b,c,d]$, provided that a directed starting edge of $P$ has been specified (modulo $G^{+}(S)$). The symbols represent the change of direction at successive vertices of the boundary of $F$ when it is projected onto the circumsphere of $S$. These directions in counterclockwise order are denoted by $r, f,$ and $l$ (`right', `forward', `left'), when there are three, and $hr, sr, (f,) sl,$ and $hl$ (`hard right', `soft right', etc.), when there are four or five. We say that $[a,b,c,d]$ is the shape of $F$. A fuller description, with examples, is provided in Section 6 of \cite{cs}.

In general $[a,b,c,d]$ is not a unique designation for $F$. $F$ may also be described by $[b,c,d,a]$, $[c,d,a,b]$, or $[d,a,b,c]$, depending on the number of directed edge orbits of $P$ under $G^{+}(P)$. In addition, if we traverse the boundary of $F$ in the opposite direction we see that the four shapes above may be the same as $[d',c',b',a']$, $[a',d',c',b']$, $[b',a',d',c']$, and $[c',b',a',d']$, where $x'$ represents the direction $x$ when traversing the boundary in the opposite direction.

In addition to being a convenient notation, we can also on occasion compare two face shapes to obtain further information. The stipulation that the starting edges for the shapes of faces of $P$ all lie in the same edge orbit of directed edges under $G^{+}(S)$ is necessary to properly compare faces of $P$. Specifically, if two faces of $P$, say $F_{1}$ and $F_{2}$, lie in the same orbit under $G^{+}(S)$, then they will have the same shape. We can obtain constraints on this common shape in the following manner. Suppose the shape of $F_{1}$ is $[a,b,c,d]$. Then the shape of $F_{1}$ can also be represented by up to $8$ derivations of this shape, namely the cyclic permutations of $[a,b,c,d]$, as well as those of $[d',c',b',a']$. No other quartet of symbols can possibly represent the shape of $F_{1}$. Therefore, since $F_{2}$ has the same shape as $F_{1}$, one of these eight derivations must be the shape of $F_{2}$, with symbol-by-symbol equality. This observation is used in the proof of Lemma~\ref{lem8}.

Let $F$ be a face of $P$. $P$ has $2|G^{+}(S)|/p$ faces, and either all of them or half of them are in the same orbit as $F$ under $G^{+}(S)$ and thus have the same shape as $F$. So the stabilizer, $G_{F}^{+}(P)$, of $F$ in $G^{+}(S)$, which is the subgroup of rotational symmetries of $F$ contained in $G^{+}(S)$ has order either $p/2$ (when all faces of $P$ are in one orbit under $G^{+}(S)$, in which case $p$ is even) or $p$ (when $P$ has two face orbits under $G^{+}(S)$), as stated in Lemma 5.3 of \cite{cs}.

Since $G_{F}^{+}(P)$ is a subgroup of $\Gamma_{F}(P)$, and $\Gamma_{F}(P)$ is a dihedral group generated by $\rho_{0}(F)$ and $\rho_{1}(F)$, it follows that each symmetry in $G_{F}^{+}(P)$ is a combinatorial automorphism in $\Gamma_{F}(P)$ either of the form $\sigma_{1}^{j}(F)$, where $\sigma_{1}(F) := \rho_{0}(F)\rho_{1}(F)$ and $j = 0,\ldots,p-1$, or $\rho_{v}(F)$ or $\rho_{e}(F)$, where $\rho_{v}(F)$ flips about a vertex $v$ of $F$ (while preserving $F$) and $\rho_{e}(F)$ flips the ends of an edge $e$ of $F$ (while preserving $F$).

The possible shapes of faces of $P$ are described in the following lemma.

\begin{lemma}
\label{lem4}
Suppose $F$ is a face of $P$, a regular polyhedron of index $2$.\\
If $P$ has two face orbits under $G^{+}(P)$, then either
\begin{itemize}
\item[(a)] $G_{F}^{+}(P)$ has $p$ automorphisms of the form $\sigma_{1}^{j}(F)$, and the shape of $F$ is of the form $[a,a,a,a]$ and edges of $F$ are the same length; or
\item[(b)] $G_{F}^{+}(P)$ has $p/2$ automorphisms of the form $\sigma_{1}^{2j}(F)$ and $p/2$ automorphisms of the form $\rho_{e}(F)$, and the shape of $F$ is of the form $[a,a',a,a']$ and alternate edges of $F$ are the same length; or
\item[(c)] $G_{F}^{+}(P)$ has $p/2$ automorphisms of the form $\sigma_{1}^{2j}(F)$ and $p/2$ automorphisms of the form $\rho_{v}(F)$, and the shape of $F$ is of the form $[f,f,f,f]$ and edges of $F$ are the same length, and $S$ is a cuboctahedron or an icosidodecahedron.
\end{itemize}
If $P$ has one face orbit under $G^{+}(P)$, then either
\begin{itemize}
\item[(d)] $G_{F}^{+}(P)$ has $p/2$ automorphisms of the form $\sigma_{1}^{2j}(F)$, and the shape of $F$ is of the form $[a,b,a,b]$ and alternate edges of $F$ are the same length; or
\item[(e)] $G_{F}^{+}(P)$ has $p/4$ automorphisms of the form $\sigma_{1}^{4j}(F)$ and $p/4$ automorphisms of the form $\rho_{e}(F)$, and the shape of $F$ is of the form $[a,a',b,b']$ or $[a,b,b',a']$ and alternate edges of $F$ are the same length; or
\item[(f)] $G_{F}^{+}(P)$ has $p/4$ automorphisms of the form $\sigma_{1}^{4j}(F)$ and $p/4$ automorphisms of the form $\rho_{v}(F)$, and the shape of $F$ is of the form $[a,f,a',f]$ or $[f,a,f,a']$ and edges of $F$ are the same length, and $S$ is a cuboctahedron or an icosidodecahedron.
\end{itemize}
\end{lemma}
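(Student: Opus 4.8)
The plan is to read off each shape directly from the subgroup structure of $G_{F}^{+}(P)$ inside the combinatorial face‑stabilizer $\Gamma_{F}(P)$, which is dihedral of order $2p$, generated by $\rho_{0}(F)$ and $\rho_{1}(F)$ with $\sigma_{1}(F)=\rho_{0}(F)\rho_{1}(F)$ of order $p$. As recalled just before the statement, the geometric rotations fixing $F$ fall into three combinatorial types: the powers $\sigma_{1}^{j}(F)$ (rotations about the face‑axis), the edge half‑turns $\rho_{e}(F)$, and the vertex half‑turns $\rho_{v}(F)$; and by the orbit count preceding the lemma (Lemma 5.3 of \cite{cs}) one has $|G_{F}^{+}(P)|=p$ when $P$ has two face orbits and $|G_{F}^{+}(P)|=p/2$ when it has one. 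Thus the two halves of the statement correspond exactly to the index‑$2$ and index‑$4$ subgroups of $\Gamma_{F}(P)$, and my first task is to list them.

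Writing $\sigma_{1}^{a}\tau_{k}=\tau_{k+a}$ for the reflection reversing the boundary about the point of ``index'' $k$, the index‑$2$ subgroups are $\langle\sigma_{1}\rangle$ together with, when $p$ is even, the two dihedral groups $\langle\sigma_{1}^{2},\tau_{0}\rangle$ and $\langle\sigma_{1}^{2},\tau_{1}\rangle$; the index‑$4$ subgroups are $\langle\sigma_{1}^{2}\rangle$ together with, when $4\mid p$, the dihedral groups $\langle\sigma_{1}^{4},\tau_{k}\rangle$. Since every subgroup of a dihedral group is cyclic or dihedral, this list is complete. Because the reflections of $\langle\sigma_{1}^{m},\tau_{k}\rangle$ are precisely the $\tau_{k+mj}$, they all share the parity of $k$ and so are all edge‑flips ($k$ odd) or all vertex‑flips ($k$ even); the residues $k\equiv 0,2$ and $k\equiv 1,3\pmod 4$ then give the two variants offered in (f) and in (e), respectively. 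This produces exactly six candidates, matching (a)--(f).

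Next I would convert each subgroup into a shape. A rotation $\sigma_{1}^{m}$ forces the turn sequence to have period $m$, so $\langle\sigma_{1}\rangle$, $\langle\sigma_{1}^{2}\rangle$, $\langle\sigma_{1}^{4}\rangle$ give periods $1,2,4$, i.e. $[a,a,a,a]$, $[a,b,a,b]$, $[a,b,c,d]$ before further identification. A combinatorial reflection $\tau_{k}$ reverses the boundary, hence imposes $\mathrm{turn}(i)=\mathrm{turn}(k-i)'$, where $'$ is the reversal of a turn symbol with $f'=f$ and $r'=l$, $sr'=sl$, and so on. Solving these relations on $t_{0},\dots,t_{3}$ is the routine core: an edge‑flip pairs adjacent symbols as $x,x'$ and yields $[a,a',a,a']$, $[a,a',b,b']$, or $[a,b,b',a']$, while a vertex‑flip fixes a vertex and therefore forces the turn there to satisfy $t=t'$, i.e. $t=f$, producing $[f,f,f,f]$, $[f,a,f,a']$, or $[a,f,a',f]$. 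The edge‑length assertions then drop out by letting these same symmetries act on the edges of $F$: the rotations equate edges a period apart, and each $\rho_{e}$ equates the two edges flanking its fixed edge‑midpoint, which together with the restriction to at most two edge lengths in Lemma~\ref{lem3} gives ``all edges equal'' in (a),(c),(f) and ``alternate edges equal'' in (b),(d),(e).

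The step I expect to be the main obstacle is pinning the vertex‑flip cases (c) and (f) to a cuboctahedron or an icosidodecahedron. A vertex‑flip $\rho_{v}(F)$ is realised geometrically as a half‑turn of $\E$ whose axis passes through a vertex of $F$, hence through a vertex of $S$; such a half‑turn lies in $G(S)$ only if that vertex sits on a $2$‑fold rotation axis of $G(S)$. For the cube, dodecahedron and icosahedron the axes through vertices are $3$‑, $3$‑ and $5$‑fold and carry no such half‑turn, whereas the vertices of the cuboctahedron and icosidodecahedron (the edge‑midpoints of a cube and of an icosahedron) do lie on $2$‑fold axes. Hence a vertex‑flip can belong to $G_{F}^{+}(P)$ only when $S$ is one of these two, which is exactly the extra conclusion attached to (c) and (f). Keeping the orientation‑reversal bookkeeping of the $'$‑operation consistent with the fixed‑point pattern of each $\tau_{k}$—so that the correct variant, up to the cyclic relabelling allowed by the face‑shape conventions, appears in (e) and (f)—is the other place where care is required.
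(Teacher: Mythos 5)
Your proposal is correct and reaches exactly the six cases, but its core step is organized differently from the paper's proof. The paper never enumerates subgroups of the dihedral group $\Gamma_{F}(P)$ abstractly: it starts from the geometric fact $\sigma_{1}^{4}(F)\in G_{F}^{+}(P)$ (imported from Section 6 of \cite{cs}), uses the relations $\rho_{e}(F)\rho_{e'}(F)=\sigma_{1}^{4}(F)=\rho_{v}(F)\rho_{v'}(F)$ to propagate flips two steps along the boundary, rules out coexistence of edge-flips and vertex-flips by a separate contradiction (a pair $\rho_{e}(F),\rho_{v}(F)$ with $v$ a vertex of $e$ would give $\sigma_{1}(F)=\rho_{e}(F)\rho_{v}(F)$ and hence a group of order $2p$, too large), proves $p$ even in the flip cases by a geometric argument about the axes, and only then pins down the rotation part as $\sigma_{1}^{k}(F)$ with $k=\mathrm{hcf}(4,p)$. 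You instead invoke the complete classification of index-$2$ and index-$4$ subgroups of a dihedral group of order $2p$, so that $\sigma_{1}^{4}(F)\in G_{F}^{+}(P)$, the divisibility conditions on $p$, and the edge-flip/vertex-flip dichotomy (constant parity of the reflection indices $\tau_{k+mj}$, which is meaningful because dihedral subgroups of these indices force $p$ even) become consequences rather than inputs; your only external ingredient is the order count $|G_{F}^{+}(P)|\in\{p/2,p\}$ from Lemma 5.3 of \cite{cs}. The geometric endgame is the same in both proofs: involutions in $G^{+}(P)$ are half-turns, a flip fixing a vertex forces the symbol $f$ there (your combinatorial $t=t'$ argument replaces the paper's coplanarity-of-edges-with-the-axis argument), and a half-turn whose axis passes through a vertex of $S$ forces a $2$-fold vertex axis, hence $q_{S}$ even, hence $S$ a cuboctahedron or icosidodecahedron, exactly as in the paper. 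Your route buys a cleaner, essentially self-contained case division in which mutual exclusion comes for free; the paper's route stays phrased in the geometric generators that are reused later. One shared soft spot, not a gap relative to the paper: in case (e) the group generated by $\sigma_{1}^{4}(F)$ and the edge-flips only forces the edge-length pattern $a,b,c,b$ around the boundary (the two classes of flip-fixed edges are not equated with each other), so the assertion that alternate edges of $F$ have the same length needs slightly more than either you (via Lemma~\ref{lem3}) or the paper (by bare assertion) supplies; since case (e) is later shown not to occur, nothing downstream depends on this.
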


\begin{proof}
Let $e'$ (respectively $v'$) be an edge (resp. vertex) of $F$ two steps away from $e$ (resp. $v$) along the boundary of $F$. Since $\sigma_{1}^{4}(F) \in G_{F}^{+}(P)$ (see \cite{cs}, Section 6), and $\rho_{e}(F)\rho_{e'}(F) = \sigma_{1}^{4}(F) = \rho_{v}(F)\rho_{v'}(F)$, we have that $\rho_{e'}(F) \in G_{F}^{+}(P)$ if and only if $\rho_{e}(F) \in G_{F}^{+}(P)$, and $\rho_{v'}(F) \in G_{F}^{+}(P)$ if and only if $\rho_{v}(F) \in G_{F}^{+}(P)$. We deduce from this that $G_{F}^{+}(P)$ can not contain involutions of the form $\rho_{e}(F)$ as well as of the form $\rho_{v}(F)$, for if so then $G_{F}^{+}(P)$ must also contain a pair $\rho_{e}(F)$, $\rho_{v}(F)$, where $v$ is a vertex of $e$. In this case $\sigma_{1}(F) = \rho_{e}(F)\rho_{v}(F) \in G_{F}^{+}(P)$ and, together with $\rho_{e}(F)$, generates a group of order $2p$, contradicting the fact that $G_{F}^{+}(P)$ has order $p/2$ or $p$.

Since $\rho_{e}(F)$ is a half-turn that flips the two vertices of $F$ in $e$, it rotates $P$ about an axis through {\em o\/} (the center of P) and the midpoint of $e$. If $\rho_{e}(F)$ is in $G_{F}^{+}(P)$ this axis must also pass through the midpoint of the edge of $F$ opposite $e$; if it passed through a vertex, $v$, of $F$, $\rho_{v}(F)$ would also be in $G_{F}^{+}(P)$. An analogous argument applies to $\rho_{v}(F)$. Thus if $G_{F}^{+}(P)$ does not contain $\sigma_{1}(F)$, $p$ must be even.

Since $\sigma_{1}^{4}(F)$ and $\sigma_{1}^{p}(F)$ are in $G_{F}^{+}(P)$, we also have that $\sigma_{1}^{k}(F) \in G_{F}^{+}(P)$, where $k = hcf(4, p) = 1, 2,$ or $4$. It follows that the automorphisms in $G_{F}^{+}(P)$ are as described in each of the six cases above. It remains to determine the face shape and edge lengths for these cases.

Clearly, if $\sigma_{1}(F) \in G_{F}^{+}(P)$, then all edges of $F$ have the same length, and all four symbols in the face shape are the same, and so $F$ has shape $[a,a,a,a]$. Similarly, if $\sigma_{1}^{2}(F) \in G_{F}^{+}(P)$, then $a$ and $c$ are the same, as are $b$ and $d$, and so $F$ has shape $[a,b,a,b]$, and alternate edges of $F$ are the same length.

Suppose $G_{F}^{+}(P)$ contains an automorphism of the form $\rho_{v}(F)$. Then $\rho_{v}(F)$, being an involution, must be a half-turn about an axis passing through a vertex, $v$, of $S$. Thus $q_{S}$ is even, so $S$ must be a cuboctahedron or an icosidodecahedron, and by Lemma~\ref{lem3} all edges of $P$ are the same length. The edges of $F$ that contain $v$ must span a plane that contains the rotation axis of $\rho_{v}(F)$. Hence, the boundary of $F$ does not change direction at $v$, indicated by an `$f$' at the corresponding position in the shape of $F$. Moreover, we can apply the same analysis to $\rho_{v'}(F)$, where $v'$ is as defined above. It follows that the face shape also has an `$f$' at the position corresponding to $v'$. Finally, the two other symbols in the face shape are of the form $a$ and $a'$, since they represent the vertices of $F$, distinct from $v$, that are the endpoints of the edges of $F$ at $v$, and these are mapped into each other by the half-turn $\rho_{v}(F)$. Thus $F$ has shape $[a,f,a',f]$ or $[f,a,f,a']$.

Finally, suppose $G_{F}^{+}(P)$ contains an automorphism of the form $\rho_{e}(F)$. Then, for $e'$ as defined above, we also have $\rho_{e'}(F) \in G_{F}^{+}(P)$. Both $\rho_{e}(F)$ and $\rho_{e'}(F)$ must be half-turns. Since $\rho_{e}(F)$ is a half-turn that flips the two vertices of $F$ in $e$, it follows that alternate edges of $F$ are the same length, and the symbols of the face shape corresponding to those vertices are adjacent and of the form $a$, $a'$. We argue similarly for $\rho_{e'}(F)$ and the two vertices of $e'$, giving entries $b$, $b'$ in the face shape. Thus the shape of $F$ is $[a,a',b,b']$ or $[a,b,b',a']$.
\end{proof}

We will see later that cases $(e)$ and $(f)$ do not exist, so that it will always be the case that $G_{F}^{+}(P)$ contains $\sigma_{1}^{2}(F)$.

\begin{lemma}
\label{lem5} Suppose $P$, a regular polyhedron of index $2$, has one face orbit under $G(P)$. Then no face of $P$ has shape $[f,f,f,f]$.
\end{lemma}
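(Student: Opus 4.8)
The plan is to convert the combinatorial symbol $[f,f,f,f]$ into a rigid geometric statement about $F$ and then collide it with a simple orbit count. First I would unpack the symbol $f$. Projecting $P$ radially from the center $o$ onto the circumsphere of $S$, each edge of $F$ becomes an arc of the great circle cut out by the plane through $o$ and that edge; an $f$ at a vertex $v$ records that the two edges of $F$ at $v$ are coplanar with $o$, so their arcs lie on one common great circle. If every vertex carries an $f$, then consecutive edge-arcs always continue along the same great circle, forcing all of them onto a single great circle $C$. Hence all vertices of $F$ lie in the plane $\Pi$ of $C$, with $o\in\Pi$; that is, $F$ is a planar polygon centered at $o$ and inscribed in $C$, traversed monotonically, and so convex.

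Next I would read off the relevant symmetries from Lemma~\ref{lem4}. Shape $[f,f,f,f]$ occurs there only in case (c): $S$ is a cuboctahedron or an icosidodecahedron, $p$ is even, and $G_F^{+}(P)$ (of order $p$) contains $p/2$ half-turns $\rho_v(F)$ about axes through vertices of $F$. Since $p$ is even, each such vertex half-turn of the $p$-gon $F$ fixes $v$ together with the diametrically opposite vertex of $F$, and its axis passes through both; as $F$ is centered at $o$, that opposite vertex is the antipode $-v$ on $C$. The $p/2$ half-turns thus pair up all $p$ vertices of $F$ into antipodal pairs, so the vertex set of $F$ is invariant under the central inversion $\iota=-I$. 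Because a cuboctahedron and an icosidodecahedron are centrally symmetric, $\iota\in G(S)=G(P)$ by Lemma~\ref{lem1}; and since $\iota$ fixes $\Pi$, preserves $C$, and preserves the antipode-closed vertex set of the convex polygon $F$, I would conclude $\iota(F)=F$. As $\iota$ reverses orientation, this places $\iota\in G_F(P)\setminus G_F^{+}(P)$.

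The contradiction then comes from the single-orbit hypothesis. By flag-transitivity $|\Gamma(P)|=4f_1$, so $|G(P)|=2f_1$; a single $G(P)$-orbit of faces has size $f_2=2f_1/p$, whence the face stabilizer satisfies $|G_F(P)|=|G(P)|/f_2=p$. On the other hand, in case (c) one has $|G_F^{+}(P)|=p$ as well, so $G_F(P)=G_F^{+}(P)\subseteq SO(3)$ consists entirely of rotations and admits no orientation-reversing element. This contradicts $\iota\in G_F(P)\setminus G_F^{+}(P)$ from the previous step, and the contradiction proves that no face has shape $[f,f,f,f]$.

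The step I expect to be the main obstacle is establishing $\iota(F)=F$, i.e.\ that the planar face is centrally symmetric. The delicacy is that $G_F^{+}(P)$ need not contain the in-plane half-turn about the normal to $\Pi$ (for example when $p\equiv 2\pmod 4$), so central symmetry of $F$ cannot simply be read off from $G_F^{+}(P)$; it must instead be extracted from the fact that the half-turns of case (c) are reflections of the even polygon $F$ through pairs of \emph{opposite vertices}, which are genuine antipodes on $C$. I would need to verify carefully that each such axis meets $F$ in two vertices rather than at edge-midpoints, and that convexity makes $F$ the unique face on its vertex set, so that $\iota$ returns $F$ to itself rather than to a distinct coplanar face.
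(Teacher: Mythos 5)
There is a genuine gap, and it is not the one you flagged at the end. Your whole argument rests on the claim that shape $[f,f,f,f]$ ``occurs only in case (c)'' of Lemma~\ref{lem4}. That is a misreading: the symbols $a$, $b$ in the other cases are placeholders that are allowed to take the value $f$ (and $f'=f$), so a face of shape $[f,f,f,f]$ is equally compatible with cases (a), (b), (d), (e), and (f). This is not a hypothetical loophole: the polyhedron of type $\{5,6\}_4$ in Table~\ref{tab:OneOrbit} has faces of shape $[f,f,f,f]$ which are convex pentagons with $\sigma_{1}(F)\in G_{F}^{+}(P)$, i.e.\ case (a). (That polyhedron has two face orbits under $G(P)$, so it does not contradict the lemma, but it shows your case reduction fails.) Since your contradiction --- the central inversion lying in a face stabilizer that consists only of rotations --- is derived entirely from the case-(c) data ($S$ a cuboctahedron or icosidodecahedron, $|G_{F}^{+}(P)|=p$ with vertex half-turns), all the remaining ways a $[f,f,f,f]$ face could arise under the one-$G(P)$-orbit hypothesis are left untouched.

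The geometric unpacking of $f$ in your first step also fails in exactly the cases you would still need to handle. When $S$ is a dodecahedron with all edges the same length ($q_{P}=6$, five directions), an $f$ does \emph{not} mean that the two edges at the vertex are coplanar with $o$; the paper states this explicitly in the proof of Theorem~\ref{thm2}. Concretely, for edge length $2$ a face of shape $[f,f,f,f]$ is a convex pentagon not passing through $o$ (and with an odd number of vertices, so its vertex set cannot be antipodally closed), and for edge length $3$ it is a skew antiprismatic star-decagon, not planar at all. So planarity, centering at $o$, convexity, and the antipodal pairing all break down there. The paper's own proof needs none of this geometry: since $P$ has one face orbit under $G(P)$ and $f'=f$, \emph{every} face must have shape $[f,f,f,f]$; then the two faces sharing an edge $\{u,v\}$ both continue ``forward'' at $v$, hence share the next edge $\{v,w\}$ as well, which is impossible for a polyhedron (it would force $q_{P}=2$). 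To repair your proof you would either have to rule out the non-(c) symmetry types under the one-orbit hypothesis by a separate argument, or abandon the rigid-geometry route in favor of a local combinatorial argument of the paper's kind.
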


\begin{proof}
Let the vertices of $P$ be coincident with the vertices of $S$, where by Lemma~\ref{lem3} $S$ is a Platonic solid, a cuboctahedron, or an icosidodecahedron. Let $F$ be a face of $P$, and assume that $F$ has shape $[f,f,f,f]$. If $P$ has one face orbit under $G^{+}(P)$, then each face of $P$ also has shape $[f,f,f,f]$. If $P$ has two face orbits under $G^{+}(P)$, then each face in the same orbit as $F$ has shape $[f,f,f,f]$, and each face not in that orbit is equivalent under $G^{+}(P)$ to a planar reflection of $F$ (since $P$ has one face orbit under $G(P)$), and so has shape $[f',f',f',f']$. It is straightforward to verify that in all cases $f' = f$, so that $[f',f',f',f']$ is the same shape as $[f,f,f,f]$. (In fact $F$ is planar, and is coplanar with \emph{o}, except when $S$ is a dodecahedron and the edges of $P$ are all the same length.) Thus all faces of $P$ have shape $[f,f,f,f]$. Let $F'$ be a face of $P$ adjacent to $F$, and let $\{u,v\}$ be their common edge. Both $F$ and $F'$ have a change of direction of $f$ ($= f'$) at $v$, and so both share another common edge $\{v,w\}$. But this is not possible, as $q_{P} \geq 3$.
\end{proof}

We establish for later use two results concerning the face shapes of Petrie duals of polyhedra.

\begin{lemma}
\label{lem6} Let $P$ be a finite regular polyhedron of index $2$. If each face of $P$ has shape either $[a,a',a,a']$ or $[a',a,a',a]$, where $a$ represents any specific change of direction, then the shape of each Petrie polygon of $P$ is either $[a,a,a,a]$ or $[a',a',a',a']$. If each face of $P$ has shape either $[a,a,a,a]$ or $[a',a',a',a']$, then the shape of each Petrie polygon of $P$ is either $[a,a',a,a']$ or $[a',a,a',a]$.
\end{lemma}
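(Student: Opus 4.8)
The plan is to read the shape of a Petrie polygon directly off the shapes of the faces it runs along, using the defining zigzag property of the polygon. Write the Petrie polygon as an edge path $\ldots,v_{i-1},v_i,v_{i+1},\ldots$ with $e_i=\{v_i,v_{i+1}\}$, so that by definition each consecutive pair $e_{i-1},e_i$ lies on a common face $F_i$ but no three consecutive edges lie on one face. Since no two faces of $P$ share two consecutive edges, $e_{i-1}$ and $e_i$ are in fact consecutive edges of $F_i$, and hence the change of direction of the Petrie polygon at $v_i$ is literally the change of direction of $F_i$ at $v_i$, read in the sense $e_{i-1}\to e_i$. Thus the shape of the Petrie polygon is recorded, vertex by vertex, as one turn drawn from each of the successive flanking faces $F_1,F_2,F_3,\ldots$.

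First I would establish the orientation alternation that drives the argument. The faces $F_i$ and $F_{i+1}$ share the edge $e_i$, and the polygon crosses $e_i$ from $v_i$ to $v_{i+1}$; since the two boundary orientations that $F_i$ and $F_{i+1}$ induce on their common edge are opposite, the sense in which the Petrie traversal agrees with a face's own boundary orientation flips from $F_i$ to $F_{i+1}$. Consequently the handedness with which each successive face-turn is read alternates along the polygon, and reading a turn with reversed handedness replaces its symbol $x$ by $x'$. This step is local to consecutive faces, so it is insensitive to global orientability: one transports a local orientation along the path, which is well defined because consecutive faces are distinct and meet in the single edge $e_i$.

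With these two facts the two cases fall out. If every face has shape $[a,a,a,a]$ (or its reverse $[a',a',a',a']$), every face-turn is the symbol $a$ read forward and $a'$ read backward; the alternating handedness then makes the recorded turns run $a,a',a,a',\ldots$, so the Petrie polygon has shape $[a,a',a,a']$ or $[a',a,a',a]$. If instead every face has shape $[a,a',a,a']$ (or $[a',a,a',a]$), the crucial point is that this sequence is carried to itself by the traversal-reversal $[a,b,c,d]\mapsto[d',c',b',a']$, so flipping the handedness does not change which of $a,a'$ is read; what does change, in passing from $v_i$ on $F_i$ to $v_{i+1}$ on $F_{i+1}$, is that one steps to the adjacent vertex of the next face, advancing the built-in $a,a'$ alternation by one position. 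I would check that this one-step advance exactly cancels the handedness flip, leaving the same symbol recorded at every vertex, so that the Petrie polygon has the constant shape $[a,a,a,a]$ or $[a',a',a',a']$.

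The hard part will be the bookkeeping in this last case: making precise that the within-face alternation and the across-face handedness flip cancel. Concretely, at the shared vertex $v_{i+1}$ one must compare the turn that $F_i$ makes there (between $e_i$ and the next edge of $F_i$) with the turn that $F_{i+1}$ makes there (between $e_i$ and $e_{i+1}$), and show these positive-orientation turns agree; granting this, a short computation with the prime operation gives constancy, while in the $[a,a,a,a]$ case the same relation holds automatically and yields alternation instead. This matching of adjacent faces at their shared vertex is exactly where the hypothesis that \emph{all} faces carry the prescribed common shape is used, and it is the one point that requires the face-shape formalism of \cite{cs} rather than the bare definition of a Petrie polygon.
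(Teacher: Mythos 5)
Your reduction of the Petrie shape to turns read off the successive flanking faces $F_1,F_2,\ldots$, and your prime-operation bookkeeping, are sound; that part is essentially the computation the paper compresses into ``the lemma follows from the way that Petrie polygons are constructed.'' But there is a genuine gap at exactly the point you flag and then defer with ``granting this'': the matching of the turns of $F_i$ and $F_{i+1}$ at the shared vertex $v_{i+1}$. You attribute this matching to ``the hypothesis that all faces carry the prescribed common shape'' and to the face-shape formalism, but the hypothesis cannot supply it: it is invariant under swapping $a\leftrightarrow a'$ independently on each face, so nothing in it prevents $F_i$ and $F_{i+1}$ from making the \emph{same} turn at $v_{i+1}$ when both are read entering along $e_i$ (equivalently, from having opposite positive-orientation turns in your transported-orientation setup). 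If that happened, your computation would output a constant Petrie shape in the $[a,a,a,a]$ case and an alternating one in the $[a,a',a,a']$ case --- precisely the opposite of the lemma. In particular, your remark that in the $[a,a,a,a]$ case ``the same relation holds automatically'' is false: the matching is needed equally in both cases and is automatic in neither.

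What actually proves the matching is polyhedrality, and this one observation is the entire content of the paper's proof. If $F_i$ and $F_{i+1}$ had the same change of direction at $v_{i+1}$, read in the same sense entering along $e_i$, then, since the next edges $\{v_{i+1},w_1\}$ of $F_i$ and $\{v_{i+1},w_2\}$ of $F_{i+1}$ have equal length (forced by the constant or alternating edge-length pattern that accompanies the hypothesis shapes), the turn symbol together with that length determines the next vertex, so $w_1=w_2$; the two faces would then share the two consecutive edges $e_i$ and $\{v_{i+1},w_1\}$, which is impossible for a polyhedron. Hence adjacent faces always carry \emph{different} symbols there, and since both symbols lie in $\{a,a'\}$ by hypothesis, one is $a$ and the other $a'$ (which also forces $a\neq f$); with this inserted, your bookkeeping closes in both cases. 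A secondary remark: the orientation layer you build (faces ``inducing opposite orientations'' on a shared edge) presupposes a coherent local choice of face orientations --- faces of a map have no intrinsic orientation, and no global choice exists in the non-orientable case --- and it is dispensable anyway: reading every turn simply in the direction of the Petrie traversal, as the paper does, avoids it entirely.
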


\begin{proof}
Let $\{u,v\}$ be any edge of $P$, and let $F_{1}$ and $F_{2}$ be the two faces of $P$ bordering $\{u,v\}$. Let the boundary of $F_{1}$ be $\{u,v,w_{1},\ldots\}$, and the boundary of $F_{2}$ be $\{u,v,w_{2},\ldots\}$. The boundaries of $F_{1}$ and $F_{2}$ each have a change of direction represented by either $a$ or $a'$ at vertex $v$. If these two changes of direction were the same, then $w_{1}$ and $w_{2}$ would coincide since the edges $\{v,w_{1}\}$ and $\{v,w_{2}\}$ have the same length, and so the boundaries of $F_{1}$ and $F_{2}$ would share two consecutive edges, and $P$ could not be a polyhedron. Thus at vertex $v$, the boundary of either $F_{1}$ or $F_{2}$ has a change of direction represented by $a$, and the boundary of the other one has a change of direction represented by $a'$. (Note that this implies $a \neq f$.) Thus the neighboring faces along any edge of $P$ have different changes of direction at an end-point of that edge. The lemma follows from the way that Petrie polygons are constructed.
\end{proof}

\begin{lemma}
\label{lem7} If $P$ is a regular polyhedron of index $2$ with all its faces having shape $[a,f,b,f]$ or $[f,c,f,d]$, where $a, b, c,$ and $d$ represent any change of direction (including $f$), then the Petrie dual $Q$ of $P$ is a regular polyhedron of index $2$ with two face orbits under $G(P)$, and $Q$ has a face with shape $[f,f,f,f]$.
\end{lemma}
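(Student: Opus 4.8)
The plan is to analyze how the Petrie polygons of $P$ become the faces of its Petrie dual $Q$, and to track the changes of direction along those Petrie polygons using the shape data of $P$'s faces. Recall that a Petrie polygon of $P$ zigzags along edges so that every two consecutive edges lie in a common face, but no three do. So as we traverse a Petrie polygon, at each successive vertex we alternately use one face of $P$ and then the next. Since every face of $P$ has shape $[a,f,b,f]$ or $[f,c,f,d]$, the defining feature is that at alternate vertices of any face the boundary does not change direction (the `$f$' entries). The key observation I would establish first is that as the Petrie polygon turns through $P$, the change of direction it makes at each vertex is governed by whether the locally relevant face contributes an `$f$' or a non-$f$ symbol at that vertex.

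\textbf{Step 1: Set up the correspondence between Petrie polygons and faces of $Q$.} By the discussion of Petrie duals in Section~\ref{mapo}, $Q$ is a regular map sharing the automorphism group of $P$, with the Petrie polygons of $P$ as its faces and the same vertices and edges. Since $P$ has index $2$, so does $Q$: the symmetry group $G(P) = G(Q)$ has the same index $2$ in the common automorphism group. I would note explicitly that $Q$ is a genuine polyhedron (its vertices and edges coincide geometrically with those of $P$, and the Petrie polygons are simple edge-cycles), so that the face-shape formalism of Section~\ref{fshap} applies to $Q$.

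\textbf{Step 2: Compute the shape of a Petrie polygon of $P$.} This is the technical heart. Fix an edge $\{u,v\}$ of $P$ and the two faces $F_1, F_2$ bordering it, just as in the proof of Lemma~\ref{lem6}. A Petrie polygon traversing $\{u,v\}$ uses $F_1$ to determine its next edge after $v$, then the face on the far side to determine the edge after that, and so on, alternating. Because every face has the form $[a,f,b,f]$ or $[f,c,f,d]$, the boundary of each face alternates between an `$f$' vertex and a non-`$f$' vertex. The crucial combinatorial point to pin down is the \emph{parity} with which these `$f$' and non-`$f$' positions line up along a Petrie polygon: I expect that the zigzag structure forces the Petrie polygon to pick up, at each of its vertices, exactly one `$f$' from the alternation pattern of the faces, so that the resulting face of $Q$ has shape $[f,f,f,f]$. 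Making this parity argument precise — showing the `$f$' positions of successive faces always register at the Petrie vertices rather than being skipped — is the step I expect to be the main obstacle, and I would do it by carefully labeling a short initial segment $u,v,w_1,\ldots$ of the Petrie polygon and reading off the change of direction at $v$, $w_1$, etc., from the face shapes, using the rule (established implicitly in Lemma~\ref{lem6}) that adjacent faces disagree at a shared vertex only on non-`$f$' entries.

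\textbf{Step 3: Deduce the orbit structure of $Q$ and conclude.} Once a face of $Q$ is shown to have shape $[f,f,f,f]$, I must verify that $Q$ has two face orbits under $G(P)$ rather than one. Here I would invoke Lemma~\ref{lem5}: if $Q$ had one face orbit under $G(Q) = G(P)$, then no face of $Q$ could have shape $[f,f,f,f]$. Since we have produced such a face, $Q$ must instead have two face orbits under $G(P)$. This contrapositive use of Lemma~\ref{lem5} cleanly forces the two-orbit conclusion and simultaneously confirms the existence of the $[f,f,f,f]$ face, completing the proof. I would close by remarking that all the hypotheses of Lemma~\ref{lem5} are met, since by Lemma~\ref{lem3} the convex hull $S$ is a Platonic solid, a cuboctahedron, or an icosidodecahedron in every case under consideration.
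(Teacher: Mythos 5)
Your overall architecture matches the paper's own proof more closely than you might expect: the paper also derives the $[f,f,f,f]$ shape of a Petrie polygon from the observation that the two faces sharing an edge must have their `$f$' entries at opposite endpoints of that edge (otherwise the two boundaries would share two consecutive edges, contradicting polyhedrality of $P$), and it also concludes by applying Lemma~\ref{lem5} in contrapositive form to force two face orbits. The parity argument you defer in Step 2 is exactly the paper's mechanism, and it does go through.

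The genuine gap is in your Step 1: you assert parenthetically that $Q$ ``is a genuine polyhedron'' because ``the Petrie polygons are simple edge-cycles.'' That is precisely the non-trivial point, not a given. Under the paper's definitions a polyhedron must be a \emph{faithful} realization of a map, so no face may have a vertex occurring more than once in its boundary; Petrie polygons of a perfectly good polyhedron can, in general, revisit vertices, in which case the Petrie dual is not polytopal at all. Your argument is also circular as stated: you invoke the face-shape formalism for $Q$ (which presupposes $Q$ is a polyhedron) in order to produce the $[f,f,f,f]$ face, but polytopality of $Q$ is part of what the lemma asserts. The paper avoids this by decoupling the two issues. It first computes the shape $[f,f,f,f]$ while explicitly ``ignoring for the moment the question of polytopality of $Q$,'' treating $Q$ merely as the collection of Petrie polygons of $P$ together with their vertices and edges --- a shape can be assigned to any closed edge-path. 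It then proves polytopality via Lemma 3.2 of \cite{cs}, which reduces the question to showing that a Petrie polygon of $P$ does not revisit a vertex; by combinatorial regularity it suffices to check one Petrie polygon, and the paper checks the one of shape $[f,f,f,f]$ by direct geometric inspection: for each admissible $S$ (dodecahedron, cuboctahedron, icosidodecahedron) and each edge length, no polygon of shape $[f,f,f,f]$ revisits a vertex. Without some such verification, your conclusion that $Q$ is a regular polyhedron of index $2$ does not follow, and Lemma~\ref{lem5} (which is stated for polyhedra) cannot be applied to $Q$ in Step 3.
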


\begin{proof}
Assume that $Q$ is a polyhedron. Then Lemma 3.2 of \cite{cs} tells us that $Q$ is a regular polyhedron of index $2$ with $G(Q) = G(P)$. We establish the existence of a face of $Q$ of shape $[f,f,f,f]$, ignoring for the moment the question of polytopality of $Q$. Let $\{u,v\}$ be any edge of $P$, and let $F_{1}$ and $F_{2}$ be the two faces of $P$ bordering $\{u,v\}$. The boundaries of $F_{1}$ and $F_{2}$ each have a change of direction of $f$ at either $u$ or $v$. If it were at the same vertex, then the boundaries of $F_{1}$ and $F_{2}$ would share two consecutive edges, and $P$ could not be a polyhedron. So the boundary of either $F_{1}$ or $F_{2}$ has an $f$ at $u$, and the boundary of the other one has an $f$ at $v$. It follows from the way the Petrie dual is constructed that one of the faces of $Q$ has shape $[f,f,f,f]$. This is true regardless of whether or not $Q$ is polytopal. If $Q$ is not polytopal, we simply regard $Q$ as the collection of Petrie polygons of $P$, and their vertices and edges.

By Lemma 3.2 of \cite{cs}, to prove that $Q$ is a polyhedron it suffices to show that a Petrie polygon of $P$ does not revisit a vertex. Since $P$ is combinatorially regular, it is enough to verify this condition for a single Petrie polygon, and we will take the polygon which is a face of shape $[f,f,f,f]$ of $Q$. Regardless of the choice of $S$ (a dodecahedron, cuboctahedron, or icosidodecahedron), inspection of the few possibilities shows that no polygon of shape $[f,f,f,f]$ and of any given edge length does revisit a vertex. Hence the face of $Q$ with shape $[f,f,f,f]$ does not revisit a vertex either. It follows that $Q$ is a polyhedron, and so by Lemma~\ref{lem5} it has two face orbits under $G(P)$.
\end{proof}

\section{Different Edge Lengths}
\label{difel}

If $P$ has edges of different lengths, there are just three possible combinations of $S$ and edge lengths of $P$, by Lemma~\ref{lem3}. These are: a) $S$ is a cube and $P$ has edges of length $1$ and $2$; b) $S$ is a dodecahedron and $P$ has edges of length $1$ and $4$; or c) $S$ is an icosahedron and $P$ has edges of length $1$ and $2$. In particular, $S$ is a Platonic solid.

We introduce a further adaptation of the face shape notation. If the vertex positions and edge lengths of $P$ are clear, as well as the convention for identifying the starting edge of the face shape, then $P$ can be characterized in terms of the shape of its faces. The notation used here is that if $P$ has one face orbit under $G^{+}(P)$ with the faces of $P$ having shape $[a,b,c,d]$ then $P$ is said to have shape $[a,b,c,d]$, whereas if $P$ has two face orbits under $G^{+}(P)$ with the faces in one orbit having shape $[a,b,c,d]$, and the faces in the other orbit having shape $[e,g,h,i]$, then $P$ is said to have shape $[a,b,c,d] \& [e,g,h,i]$ (or, equivalently, $[e,g,h,i] \& [a,b,c,d]$).

\begin{lemma}
\label{lem8} If $P$ has edges of different lengths, then $P$ has one of the four shapes $[a,a,a,a]$, $[a,a,a',a']$, $[a,a',a',a]$, or $[a,a',a,a'] \& [a',a,a',a]$. In each case $a \neq f$, and $G(P)$ acts face-transitively.
\end{lemma}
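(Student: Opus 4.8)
The plan is to start from Lemma~\ref{lem3}, which in the present different-length situation forces $S$ to be one of three Platonic solids carrying exactly two edge lengths: a cube with lengths $1,2$, a dodecahedron with lengths $1,4$, or an icosahedron with lengths $1,2$. In particular $S$ is never a cuboctahedron or icosidodecahedron, so cases (c) and (f) of Lemma~\ref{lem4} are immediately ruled out, and every face $F$ of $P$ falls under one of the cases (a), (b), (d), (e). The goal is then to see which of these survive and to pin the surviving face shapes down to the four listed forms.

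I would first record that $G(P)$ acts face-transitively, independently of the shape analysis. Since $P$ has index $2$, there is a homomorphism $\pi\colon\Gamma(P)\to\mathbb{Z}_2$ with kernel $G(P)$, and a stratum of cells (vertices, edges, or faces) lies in a single $G(P)$-orbit exactly when $\pi$ is nontrivial on the corresponding generator pair $\langle\rho_1,\rho_2\rangle$, $\langle\rho_0,\rho_2\rangle$, or $\langle\rho_0,\rho_1\rangle$. The hypothesis that $P$ has edges of two distinct lengths means $G(P)$ is not edge-transitive, so $\pi(\rho_0)=\pi(\rho_2)=1$; combining this with vertex-transitivity (which gives $\pi(\rho_1)\neq1$ or $\pi(\rho_2)\neq1$) forces $\pi(\rho_1)\neq1$, whence $\pi$ is nontrivial on $\langle\rho_0,\rho_1\rangle$, i.e. $G(P)$ is face-transitive. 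This is the last assertion of the lemma, and it also lets me eliminate case (a): there every face has all its edges of one length, so adjacent faces share that common length, and by connectedness of the face-adjacency graph all edges of $P$ would be equal, contradicting the hypothesis.

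This leaves cases (b), (d), (e), in each of which alternate edges of $F$ are equal. Since not all edges of $F$ can be equal (the case-(a) argument again), the edge lengths of $F$ strictly alternate between the two values, so $p$ is even and every vertex of $F$ is a junction of one short and one long edge. In case (b) the half-turn $\rho_e(F)$ interchanges the two endpoints of an edge and reverses the boundary orientation, carrying the turn at one vertex to that at the next with reversed chirality; this forces the base shape $[a,b,a,b]$ to become $[a,a',a,a']$, and since case (b) has two face orbits under $G^+(P)$ that are mirror images, a reflection in $G(S)=G(P)$ (Lemma~\ref{lem1}) identifies them into one $G(P)$-orbit, producing exactly the fourth listed shape $[a,a',a,a']\,\&\,[a',a,a',a]$. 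In cases (d) and (e) the rotational stabilizer is generated by $\sigma_1^2(F)$ and $\sigma_1^4(F)$ respectively, giving base shapes $[a,b,a,b]$ and $[a,a',b,b']$ (or $[a,b,b',a']$); here the task is to show the second direction-change symbol $b$ must equal $a$ or $a'$, which collapses these to $[a,a,a,a]$ and to $[a,a,a',a']$ or $[a,a',a',a]$, the first three listed shapes. For this I would use the eight-derivation comparison recorded before Lemma~\ref{lem4}: all faces lie in a single $G^+(S)$-orbit in cases (d),(e), so any face obtained from $F$ by an element of $G(S)$ has a shape equal, symbol by symbol, to one of the eight derivations of $[a,b,c,d]$, and feeding in the restricted geometry of short-long junctions for each solid forces $b\in\{a,a'\}$. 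Finally $a\neq f$ holds throughout: for the all-$a$ shape this is Lemma~\ref{lem5}, and for the primed shapes it is the observation in the proof of Lemma~\ref{lem6} that two faces meeting along an edge must differ in their turn at a shared endpoint, forcing $a\neq a'$ and hence $a\neq f$.

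The main obstacle is the step in cases (d) and (e) of proving $b\in\{a,a'\}$, that is, that only one turn magnitude occurs around the boundary. The $\sigma_1^2$- or $\sigma_1^4$-symmetry alone permits two a priori independent junction types, and the reversal among the eight derivations turns out to be automatically consistent and to yield no constraint; the real leverage must come from matching $F$ against its images under orientation-reversing elements of $G(S)$ together with the fact that, for each of the three specific solids and its two admissible edge lengths, there is essentially a single short-long junction up to reflection. Carrying this comparison out solid by solid, and checking that the resulting closed edge-cycles are genuine faces (no repeated vertex), is where the bulk of the work lies.
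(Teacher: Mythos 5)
Your opening is sound, and in places genuinely different from (and arguably cleaner than) the paper's proof. The $\mathbb{Z}_2$-homomorphism argument is valid: since $G(P)$ is a normal subgroup of index $2$, a stratum of cells forms one $G(P)$-orbit exactly when the quotient map $\pi$ is nontrivial on the corresponding stabilizer $\langle\rho_i,\rho_j\rangle$; edge-intransitivity (forced by the two edge lengths) gives $\pi(\rho_0)=\pi(\rho_2)=1$, vertex-transitivity then gives $\pi(\rho_1)\neq 1$, hence face-transitivity. The paper instead extracts face-transitivity at the end of each case, so this is a real simplification, and it correctly powers your elimination of case (a) of Lemma~\ref{lem4} and your treatment of case (b): with $G(P)$ face-transitive, an orientation-reversing symmetry must swap the two $G^{+}(P)$-orbits, so the second orbit carries the mirrored shape $[a',a,a',a]$. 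Your derivation of $a\neq a'$ (hence $a\neq f$) from the observation in the proof of Lemma~\ref{lem6} is also correct.

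The gap is in cases (d) and (e), which is where the lemma actually lives, and your proposal leaves that step unproved. You assert that the eight-derivation comparison is ``automatically consistent'' and yields no constraint, and that $b\in\{a,a'\}$ must instead come from a solid-by-solid analysis of short-long junctions --- which you then do not carry out. Both claims are wrong. The comparison of $F$ with its image $F'$ under a single plane reflection is purely symbolic and settles everything: in cases (d), (e) there is one face orbit under $G^{+}(P)$, so $F'$ has the same shape as $F$; on the other hand, because the starting edges of the two shapes lie in the same directed-edge orbit (hence have the same length, so only shifts by two steps and primed reversals are admissible), the shape of $F'$ computed from $[a,b,c,d]$ is one of $[d,c,b,a]$, $[b,a,d,c]$, $[a',b',c',d']$, $[c',d',a',b']$. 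Equating these with $[a,b,a,b]$ forces $a=b$ or ($a=a'$ and $b=b'$, i.e.\ the all-$f$ shape); equating them with $[a,a',b,b']$ forces $b=a'$ or all-$f$; equating them with $[a,b,b',a']$ forces $b=a$ or all-$f$; and $[f,f,f,f]$ is excluded by Lemma~\ref{lem5}. That is the entire argument --- no geometry of the cube, dodecahedron or icosahedron enters at this stage (that case analysis belongs to Theorem~\ref{thm1}, where the surviving shapes are tested for actual polyhedrality). Moreover, the geometric fact you propose to lean on is false as stated: on the cube with edge lengths $1$ and $2$, a short and a long edge can meet with turn $r$, $f$, or $l$, so there are two junction types even up to reflection; and the ``no repeated vertex'' check you mention is not part of this lemma at all. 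As written, the central assertion of the lemma rests on an unexecuted and misdirected plan.
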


\begin{proof}
Let $F$ be a face of $P$ such that the boundary of $F$ contains edges of different lengths. From Lemma~\ref{lem4}, we have that $F$ has alternating edge lengths, and $G_{F}^{+}(P)$ can only contain $\sigma_{1}^{2}(F)$, or automorphisms of the form $\rho_{e}(F)$, or both.

Let $F'$ be the image of $F$ under any plane reflection of $P$, so that $F'$ and $F$ are in the same orbit under $G(P)$. Then the boundaries of both $F$ and $F'$ have alternating edge lengths. If the shape of $F$ is $[a,b,c,d]$ then the shape of $F'$ is one of  $[d,c,b,a]$, $[b,a,d,c]$, $[a',b',c',d']$, or $[c',d',a',b']$, because the starting edges of the shapes of $F$ and $F'$ lie in the same directed edge orbit under $G^{+}(P)$, and so have the same length. As noted previously, if $F$ and $F'$ are in the same orbit under $G^{+}(P)$ then their shapes will be identical.

If $G_{F}^{+}(P)$ contains both $\sigma_{1}^{2}(F)$ and an automorphism of the form $\rho_{e}(F)$, then $G_{F}^{+}(P)$ has order $p$, so $P$ has two face orbits under $G^{+}(P)$, and the shape of $F$ is $[a,a',a,a']$ (see Lemma~\ref{lem4}). Consequently, the shape of $F'$ is $[a',a,a',a]$. If $F$ and $F'$ are in the same orbit under $G^{+}(P)$, then they have the same shape; in that case, equating the shapes of $F$ and $F'$ gives a shape of $[a,a,a,a]$, where $a' = a$. But there is only one such face (which is $[f,f,f,f]$) and so $P$ would have only one face orbit under $G^{+}(P)$, not two. Therefore $F$ and $F'$ are in different orbits under $G^{+}(P)$, and it follows that $P$ has shape $[a,a',a,a'] \& [a',a,a',a]$ with $a \neq f$, and has only one face orbit under $G(P)$.

If $G_{F}^{+}(P)$ contains $\sigma_{1}^{2}(F)$, but no automorphisms of the form $\rho_{e}(F)$, then $G_{F}^{+}(P)$ has order $p/2$, so $P$ has one face orbit under $G^{+}(P)$ and $G(P)$, and the shape of $F$ is $[a,b,a,b]$. Consequently, the shape of $F'$ must be $[b,a,b,a]$ or $[a',b',a',b']$. Since $F$ and $F'$ are in the same orbit under $G^{+}(P)$, they have the same shape. Equating the shapes of $F$ and $F'$ gives just two possibilities: either $a = b$; or $a = a'$ and $b = b'$ (and hence $a = b = f$). The latter gives the shape $[f,f,f,f]$ for $F$, which is excluded by Lemma~\ref{lem5}. Hence $F$ must have shape $[a,a,a,a]$, with $a \neq f$.

If $G_{F}^{+}(P)$ contains only automorphisms of the form $\rho_{e}(F)$, then $G_{F}^{+}(P)$ has order $p/2$, so $P$ has one face orbit under $G^{+}(P)$, and the shape of $F$ is either $[a,a',b,b']$ or $[a,b,b',a']$ (depending on whether or not the edge flipped by $\rho_{e}(F)$ is the same length as the starting edge). It follows that the shape of $F'$ is either $[b',b,a',a]$ or $[a',a,b',b]$ in the first instance, and either $[a',b',b,a]$ or $[b,a,a',b']$ in the second instance. Since $F$ and $F'$ are in the same orbit under $G^{+}(P)$, they have the same shape, which we find by equating the shapes of $F$ and $F'$, and which must be $[a,a',a',a]$ or $[f,f,f,f]$ in the first case, and $[f,f,f,f]$ or $[a,a,a',a']$ in the second case. However, by Lemma~\ref{lem5}, if $P$ has one face orbit under $G(P)$ then the face shape can not be $[f,f,f,f]$. Thus $F$ must have shape $[a,a',a',a]$ or $[a,a,a',a']$, with $a \neq f$.
\end{proof}

With this result we obtain all possibilities for finite regular polyhedra of index $2$ with different edge lengths.

\begin{theorem}
\label{thm1}
There are precisely two combinatorially regular polyhedra of index $2$ with vertices on one orbit and edges of different lengths. They are Petrie duals of each other, are of type $\{6, 6\}_{6}$, and have their vertices at the vertices of a dodecahedron. One is orientable, of genus $11$, with planar faces; the other is non-orientable, of genus $22$, with non-planar faces.
\end{theorem}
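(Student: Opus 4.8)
The plan is to combine the two preceding reductions and then realize or exclude each surviving possibility by an explicit face-tracing argument. By Lemma~\ref{lem3}, if $P$ has edges of different lengths then $S$ is a Platonic solid and there are exactly three possibilities: $S$ a cube with edge lengths $1,2$; $S$ a dodecahedron with edge lengths $1,4$; or $S$ an icosahedron with edge lengths $1,2$. By Lemma~\ref{lem2}, $q_{P}=6$ in the first two cases and $q_{P}=10$ in the third. Since the two edge lengths differ, every face has $p_{P}$ even with its edge lengths alternating, and by Lemma~\ref{lem8} its shape is one of $[a,a,a,a]$, $[a,a,a',a']$, $[a,a',a',a]$, or $[a,a',a,a']\,\&\,[a',a,a',a]$, with $a\neq f$ and $G(P)$ acting face-transitively. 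This leaves a finite list of triples (a choice of $S$ and edge lengths, and a choice of shape) to examine.

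For each triple I would attempt a constructive realization. Fixing a directed starting edge modulo $G^{+}(S)$, I trace out a face by alternating the two edge lengths and turning, at each successive vertex, according to the symbolic turn prescribed by the shape; since each admissible symbol (an element of $\{r,f,l\}$ when $q_{S}=3$, of $\{hr,sr,f,sl,hl\}$ when $q_{S}=5$) together with the incoming directed edge determines the outgoing edge, it fixes the next vertex among the neighbours of $S$ at the prescribed distance. The face closes at the least $p$ for which the trajectory returns to its initial directed edge, which fixes the type $\{p,q_{P}\}$; the combinatorial identities $q_{P}f_{0}=2f_{1}=p_{P}f_{2}$ then give the $f$-vector, and the Petrie length $r$ is read off from the single automorphism group shared by the six related maps. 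I then verify that the traced faces assemble into a genuine map: no face revisits a vertex, and exactly $q_{P}$ of them surround each vertex. Realizability already forces index $2$, since $|\Gamma(M)|$ equals the flag number $4f_{1}$ while $G(P)=G(S)$, so the index $4f_{1}/|G(S)|$ equals $2$ in every one of the three cases.

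The elimination step is the crux. Running the trace for the cube with lengths $1,2$ and for the icosahedron with lengths $1,2$, across all four shapes, I expect each attempt to fail: the trajectory either revisits a vertex before it can close (so yields no polyhedron), or cannot be completed to $q_{P}$ faces about a vertex without forcing two faces to share consecutive edges. The same trace also eliminates the shapes $[a,a,a',a']$ and $[a,a',a',a]$ for the dodecahedron (these are Lemma~\ref{lem4}(e), and their nonexistence is exactly what was promised in the remark following Lemma~\ref{lem4}). This portion is a careful but finite spherical/coordinate computation, and I anticipate it to be the main obstacle, since it requires checking each symbolic turn-sequence against the concrete neighbour structure of $S$.

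What survives is the dodecahedron with lengths $1$ and $4$, and exactly the two shapes $[a,a,a,a]$ and $[a,a',a,a']\,\&\,[a',a,a',a]$. Here the trace closes precisely after six edges, so $p_{P}=6$ and, with $q_{P}=6$, the type is $\{6,6\}$; the $f$-vector is $(f_{0},f_{1},f_{2})=(20,60,20)$ and the Euler characteristic is $20-60+20=-20$. Tracing a Petrie polygon shows $r=6$, giving type $\{6,6\}_{6}$. The shape $[a,a,a,a]$ has constant turn and realizes the polyhedron $P_{1}$ with planar hexagonal faces; its map is orientable, so $2-2g=-20$ gives $g=11$. Since $p=r=6$, the Petrie dual of a $\{6,6\}_{6}$ map is again of type $\{6,6\}_{6}$; by Lemma~\ref{lem6} Petrie duality interchanges $[a,a,a,a]$ with $[a,a',a,a']$, so this Petrie dual $P_{2}$ is exactly the second surviving polyhedron, and it is a regular index-$2$ polyhedron with the same symmetry group by Lemma~3.2 of \cite{cs} (as invoked in the proof of Lemma~\ref{lem7}). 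Its alternating turns $[a,a',a,a']$ make its faces skew, and its map is non-orientable, whence $2-g=-20$ gives $g=22$. Thus there are precisely the two polyhedra claimed, Petrie duals of each other, both of type $\{6,6\}_{6}$, with the stated orientability, planarity, and genera.
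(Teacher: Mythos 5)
Your reduction to the sixteen candidate structures and your face-tracing elimination follow the same strategy as the paper, but there is a genuine gap at the step where you assert that ``realizability already forces index $2$, since $|\Gamma(M)|$ equals the flag number $4f_{1}$.'' That identity holds \emph{only} when the underlying map $M$ is combinatorially regular (flag-transitive); for a map that merely happens to be realized with vertex-, edge-, and face-transitive geometric symmetry, $|\Gamma(M)|$ can be strictly smaller than the number of flags. Combinatorial regularity is exactly what must be verified, not assumed --- it is the defining requirement for an index-$2$ polyhedron. This is not a pedantic point, because your two geometric elimination criteria (a face revisiting a vertex, or failure to fit $q_{P}$ faces around a vertex) do not dispose of all the candidates. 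In the paper's proof, four of the fourteen rejected structures --- $[r,l,l,r]$ and $[r,l,r,l] \& [l,r,l,r]$ on the cube, and $[hr,hl,hr,hl] \& [hl,hr,hl,hr]$ and $[sr,sl,sr,sl] \& [sl,sr,sl,sr]$ on the icosahedron --- have faces that do \emph{not} self-intersect and do assemble into maps; they are excluded solely because the combinatorial automorphism $\rho_{1}$ is not well-defined, i.e.\ the underlying map is not combinatorially regular (the paper cites Appendix 3 of \cite{cut} for this). Your trace would let these four structures through, and your index formula would then wrongly certify them as regular polyhedra of index $2$, leaving you with six polyhedra rather than the two claimed by the theorem.

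The same omission affects the positive direction: for the two surviving dodecahedral structures of shapes $[a,a,a,a]$ and $[a,a',a,a'] \& [a',a,a',a]$, closure of the traced faces into a map with the correct local data does not by itself establish flag-transitivity of $\Gamma(M)$; the paper does not get this for free either, but defers the verification of combinatorial regularity to Appendix 2 of \cite{cut}. The remainder of your proposal --- the use of Lemmas~\ref{lem3}, \ref{lem2} and \ref{lem8}, the f-vector $(20,60,20)$, the Euler characteristic $-20$, the genus computations $g=11$ (orientable, from $2-2g=-20$) and $g=22$ (non-orientable, from $2-g=-20$), and the identification of the two polyhedra as Petrie duals via Lemma~\ref{lem6} --- is sound once the regularity verification is supplied for the survivors and a non-regularity argument is supplied for the four structures that your geometric criteria cannot kill.
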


\begin{proof}
If $S$ is a cube (and $P$ has edges of length $1$ and $2$) or a dodecahedron (and $P$ has edges of length $1$ and $4$), then $q_{P} = 6$, by Lemma~\ref{lem2}, and so there are three possible directions ($r, f,$ or $l$) for a face boundary to continue from any vertex, since edge lengths alternate along a face and around a vertex. By Lemma~\ref{lem8}, there are seven possible shapes for $P$. Let $F$ be a face of $P$, and $F'$ be the image of $F$ under any plane reflection of $P$. If $G_{F}^{+}(P)$ does not contain both $\sigma_{1}^{2}(F)$ and an automorphism of the form $\rho_{e}(F)$, then $F$ and $F'$ are in the same orbit under $G^{+}(P)$, so that $[r,r,r,r]$, $[r,r,l,l]$, and $[r,l,l,r]$ represent the same faces as $[l,l,l,l]$, $[l,l,r,r]$, and $[l,r,r,l]$, respectively. (Note here that each edge of $P$ lies in a reflection plane of $S$.) Eliminating these duplications from separate consideration reduces the analysis to polyhedra of shapes $[r,r,r,r]$, $[r,r,l,l]$, $[r,l,l,r]$, or $[r,l,r,l] \& [l,r,l,r]$.

If $S$ is an icosahedron (and $P$ has edges of length $1$ and $2$), then $q_{P} = 10$, by Lemma~\ref{lem2}, and so there are five possible directions for a face boundary to continue from any vertex. By Lemma~\ref{lem8}, there are then $14$ possible shapes for $P$, and in the same manner as above, we see that $6$ pairs of these represent duplicate faces, so that we are left with $8$ possible different shapes for $P$, namely $[hr,hr,hr,hr]$, $[hr,hr,hl,hl]$, $[hr,hl,hl,hr]$, and $[hr,hl,hr,hl] \& [hl,hr,hl,hr]$, as well as $[sr,sr,sr,sr]$, $[sr,sr,sl,sl]$, $[sr,sl,sl,sr]$, and $[sr,sl,sr,sl] \& [sl,sr,sl,sr]$.

These sixteen structures, four each when $S$ is a cube or a dodecahedron and eight when $S$ is an icosahedron, are thus the only candidates for finite regular polyhedra of index $2$ with edges of different length. Since $P$ is a regular polyhedron, it is, by definition, a faithful realization of a regular map, $M$, through bijections between $M$ and $P$, and thus we have that no face of $P$ has a vertex occurring more than once in its boundary.

Figure \ref{fig1}, below, shows by direct construction (using the convention that the starting edge in the face shape notation is an edge of length $1$) a portion of a face for seven of the sixteen structures. In each case the faces self-intersect, so these structures can be eliminated immediately. Three of the diagrams each show a portion of a face for two structures, since in each case the faces of the two structures share the portion shown.

\begin{figure} [ht]
\centering
\includegraphics[trim = 20mm 40mm 25mm 30mm, clip, scale = 0.6]{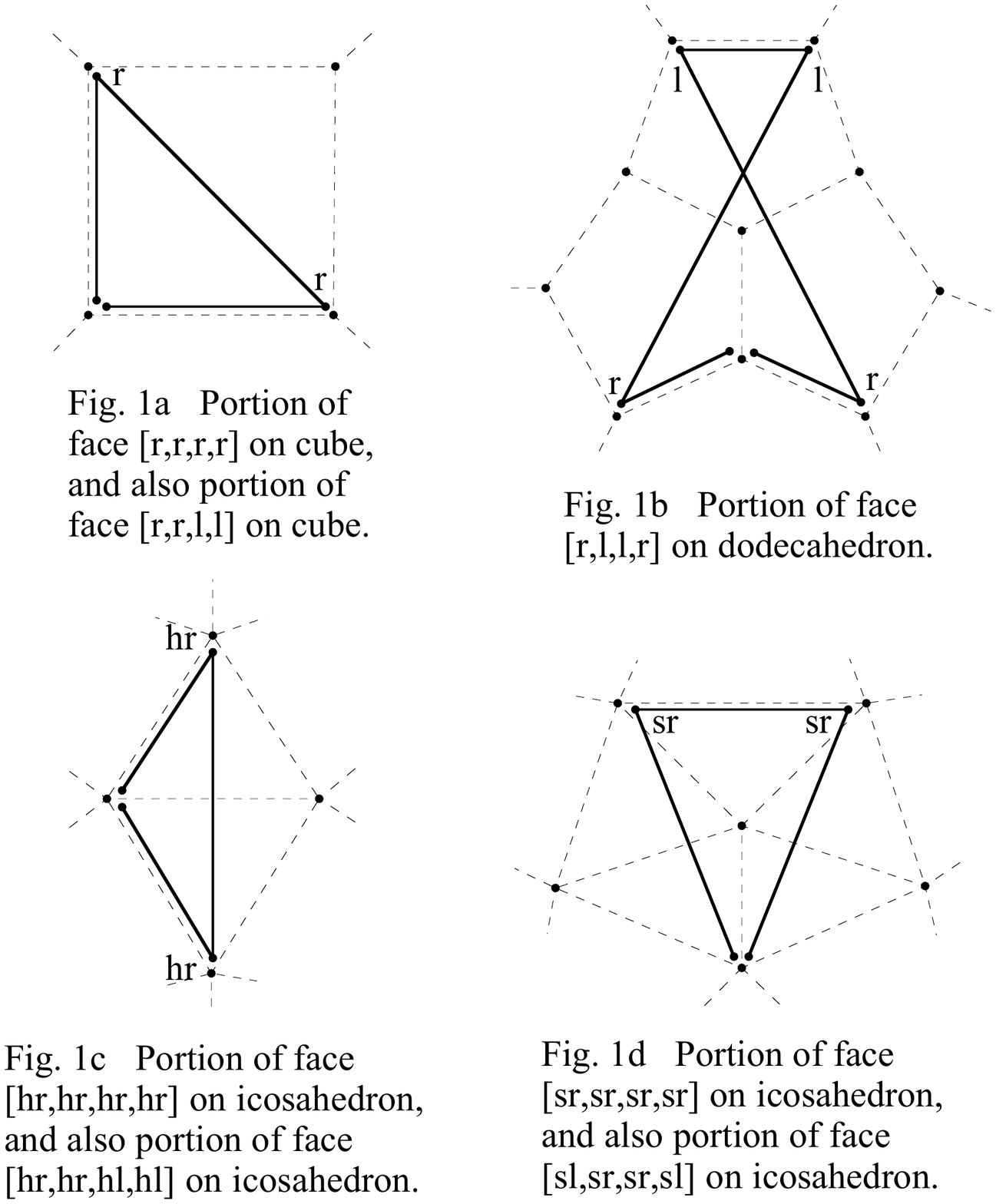}\\
\caption{}
\label{fig1}
\end{figure}

The edges in Figure \ref{fig1} have been shrunk slightly to show the underlying structure of $S$ more clearly. Note that the face $[sl,sr,sr,sl]$ in Figure 1d is the face $[sr,sl,sl,sr]$ with the same starting edge, but traversed in the opposite direction.

None of the diagrams in Figure \ref{fig1} represents the entire face, as they each have an odd number of edges, whereas any face with edges of different lengths must have an even number of edges. Thus each of the seven faces that are partially shown in Figure \ref{fig1} self-intersects at a vertex, so the structures containing those faces are not regular polyhedra by definition. This eliminates from consideration $7$ of the $16$ candidate structures.

The seven structures that are Petrie dual to the structures in Figure \ref{fig1} are, respectively, the structures $[r,l,r,l] \& [l,r,l,r]$, and $[r,l,l,r]$ when $S$ is the cube; $[r,r,l,l]$ when $S$ is the dodecahedron; and $[hr,hl,hr,hl] \& [hl,hr,hl,hr]$, $[hr,hl,hl,hr]$, $[sr,sl,sr,sl] \& [sl,sr,sl,sr]$, and $[sr,sr,sl,sl]$ when $S$ is the icosahedron. It is the case that none of these seven structures is a regular polyhedron. This can be confirmed directly from the geometry determined by their shape. Three of these seven structures, namely $[r,r,l,l]$ when $S$ is a dodecahedron, and $[hr,hl,hl,hr]$ and $[sr,sr,sl,sl]$ when $S$ is an icosahedron, have self-intersecting faces. Diagrams, similar to those in Figure \ref{fig1}, demonstrating this are in Appendix 3 of \cite{cut}.

It is also the case that for none of the fourteen structures --- the seven in Figure \ref{fig1}, and the seven that are Petrie dual to them --- is the combinatorial automorphism $\rho_{1}$ well-defined (that is, the underlying map is not combinatorially regular). Appendix 3 of \cite{cut} contains diagrams demonstrating this for the remaining four structures that have not been shown to have self-intersecting faces, namely $[r,l,l,r]$ and $[r,l,r,l] \& [l,r,l,r]$ when $S$ is a cube, and $[hr,hl,hr,hl] \& [hl,hr,hl,hr]$ and $[sr,sl,sr,sl] \& [sl,sr,sl,sr]$ when $S$ is an icosahedron.

Thus we are left with just two possible polyhedra of index $2$ with different edge lengths, namely the two structures with shapes $[r,r,r,r]$, and $[r,l,r,l] \& [l,r,l,r]$, where $S$ is a dodecahedron. It is the case that these structures are both regular polyhedra of index $2$, and the verification of their combinatorial regularity is given in Appendix 2 of \cite{cut}. Figure \ref{fig2} shows their representative faces. These then are the only two such polyhedra with edges of different lengths. These polyhedra are derived, respectively, from cases $(d)$ and $(b)$ of Lemma~\ref{lem4}, and so for every face $F$, $G_{F}^{+}(P)$ contains $\sigma_{1}^{2}(F)$.

Each of these two polyhedra have type $\{6, 6\}$ (see Lemma~\ref{lem2}), an f-vector of $(20, 60, 20)$, and edge lengths of $1$ and $4$. The polyhedron with shape $[r,r,r,r]$ is orientable, with one face orbit under $G^{+}(P)$ (and hence also $G(P)$), and has planar faces; the polyhedron with shape $[r,l,r,l] \& [l,r,l,r]$ is non-orientable, with non-planar faces, and has two face orbits under $G^{+}(P)$, but only one face orbit under $G(P)$, so that faces in different orbits are mirror reflections of each other.
\end{proof}

\begin{figure} [ht]
\centering
\includegraphics[trim = 30mm 105mm 30mm 85mm, clip, scale = 0.6]{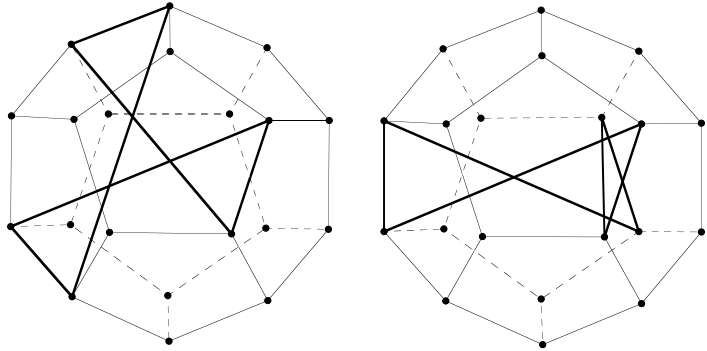}\\
\caption{The two polyhedra with edges of unequal length. They have type $\{6, 6\}_{6}$ and the vertices coincide with those of a dodecahedron. They are Petrie-dual to each other. The left one has shape $[r,r,r,r]$ and is orientable; the right one has shape $[r,l,r,l]\&[l,r,l,r]$ and is non-orientable with one face orbit under $G(P)$. Shown is one face.}
\label{fig2}
\end{figure}

\section{Edge Orientation and Polyhedral Type}
\label{eopt}

From now on we suppose that all edges of $P$ have the same length. Recall that the vertices of $P$ lie at the vertices of $S$, and are in one orbit under $G(P)$. By Lemma~\ref{lem3}, the edges of $P$ are the same length if and only if $S$ is a dodecahedron and the edge length is $2$ or $3$, or $S$ is a cuboctahedron or an icosidodecahedron.

If $S$ is a Platonic solid, any pair of vertices of $P$ at a given distance (defined as the shortest distance along edges of $S$) is equivalent under $G(P)$ to any other pair of vertices at this distance. However, when $S$ is a cuboctahedron or an icosidodecahedron this property does not hold, and accordingly we modify the definition of edge length of $P$ in order to maintain it. Non-equivalent edges of the same length exist in these cases because the cuboctahedron and icosidodecahedron each have two different face shapes. So when $S$ is a cuboctahedron or an icosidodecahedron, we modify the measure of the length of an edge of $P$ by taking the shortest path along diagonals of faces of $S$ (where a diagonal of a triangle is the same as an edge of the triangle). We keep the side of each triangle to be of length $1$, and designate as $d$ the length of the diagonal of the square in the case of the cuboctahedron (equal to $\sqrt{2}$), or the pentagon in the case of the icosidodecahedron (equal to the golden ratio, $(1+\sqrt{5})/2$). Then we can uniquely specify, up to the action of $G(P)$, any edge of $P$ by giving its length as $m+nd$, where $m$ and $n$ are integers and $m+nd$ is minimized.

\begin{lemma}
\label{lem9} Let $P$ be a regular polyhedron of index $2$ such that $P$ has one vertex orbit and all edges of $P$ have the same length. If $S$ is the dodecahedron, then the edge length of $P$ is $2$ or $3$. If $S$ is the cuboctahedron, then the edge length of $P$ is $1$ or $2$. If $S$ is the icosidodecahedron, then the edge length of $P$ is $1, 2, 3, 4, d,$ or $2d$. In all cases, $G(P)$ is transitive on the edges of $P$.
\end{lemma}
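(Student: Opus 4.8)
The plan is to fix a vertex $u$ of $S$ and study the action of its stabilizer $G_u := \{g \in G(P) : g(u)=u\}$ on the other vertices, exploiting that $G(P)=G(S)$ is vertex-transitive (Lemma~\ref{lem1}) and that all edges of $P$ have equal length. I treat the three admissible hulls from Lemma~\ref{lem3} in turn. The dodecahedral case is essentially done: Lemma~\ref{lem3} already gives edge length $2$ or $3$. For edge-transitivity note $q_P=6$ (Lemma~\ref{lem2}) and $|G_u|=6$; a dodecahedron vertex has $3,6,6,3,1$ further vertices at combinatorial distance $1,2,3,4,5$, so only the distance-$2$ and distance-$3$ shells have six members, and each is a single free $G_u$-orbit. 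As the six edges at $u$ all run into one such shell, they form one $G_u$-orbit, which together with vertex-transitivity yields edge-transitivity of $G(P)$.

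For $S$ a cuboctahedron or icosidodecahedron we have $q_P=4$ and $|G_u|=4$. As in the proof of Lemma~\ref{lem3}, every vertex other than $u$ and its antipode $u^*$ lies either in a free $G_u$-orbit of size $4$ (call these \emph{generic}) or, precisely when it lies on the perpendicular bisector plane of $u$ and $u^*$, in a $G_u$-orbit of size $2$. The $G_u$-orbit of an edge $\{u,v\}$ has the same size as the $G_u$-orbit of $v$ (since $g\{u,v\}=\{u,g(v)\}$ for $g\in G_u$), and this orbit must exhaust the four edges at $u$; hence $v$ must be generic, so that $u^*$ and the perpendicular-bisector vertices are never edge-endpoints. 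The same statement shows the four edges at $u$ form a single $G_u$-orbit, whence $G(P)$ is edge-transitive. It therefore remains only to list the modified lengths occurring among the generic orbits.

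For the cuboctahedron this is a short computation. With vertices the permutations of $(\pm1,\pm1,0)$ and $u=(1,1,0)$ (so $u^*=(-1,-1,0)$ and the bisector plane is $x+y=0$), the twelve vertices split as: $u$; the four triangle-neighbours $(1,0,\pm1),(0,1,\pm1)$ at length $1$; the square-diagonal pair $(1,-1,0),(-1,1,0)$, which are exactly the two bisector vertices, at length $d$; the four vertices $(0,-1,\pm1),(-1,0,\pm1)$ at length $2$ (here $2<1+d$, so the triangle route wins); and $u^*$ at length $2d$. Only the length-$1$ and length-$2$ shells are generic, so the edge length of $P$ is $1$ or $2$.

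The icosidodecahedron is handled identically, and the enumeration here is the main obstacle. Realize the $30$ vertices as the midpoints of the edges of the icosahedron $\{(0,\pm1,\pm\varphi),(\pm1,\pm\varphi,0),(\pm\varphi,0,\pm1)\}$, where $\varphi=(1+\sqrt5)/2$ (so $d=\varphi$), and take $u=(\varphi,0,0)$, for which $G_u$ is the Klein four-group of sign changes on the last two coordinates. One checks that the four vertices $(0,0,\pm\varphi),(0,\pm\varphi,0)$ are precisely those on the bisector plane $x=0$, forming the two exceptional orbits, so the remaining $24$ vertices split into six generic orbits of size $4$. The triangle-neighbours give length $1$, and the two pentagons through $u$ contribute four distinct diagonal endpoints giving length $d$ (both generic here, unlike the cuboctahedron); the real work is to compute, for the other four generic orbits, the shortest path weighted $1$ along triangle edges and $d=\varphi$ along pentagon diagonals, and in particular to verify that mixed routes such as $1+d$ never undercut the values claimed. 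Carrying this out shows the four remaining generic orbits realize the values $2,3,4$ and $2d$, so the edge length of $P$ is one of $1,2,3,4,d,2d$, completing the lemma.
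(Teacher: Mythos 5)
Your strategy---classify the vertex-stabilizer orbits and argue that the $q_P$ edges at a vertex $u$ must land in a single free orbit---is a genuinely different route from the paper's, and it does work for the dodecahedron and the cuboctahedron. But it has a real gap at the icosidodecahedron, which is exactly where the hard case lives. The unjustified step is: ``this orbit must exhaust the four edges at $u$; hence $v$ must be generic.'' The four edges at $u$ form a $G_u$-invariant set, but an invariant set of size $4$ need not be a single orbit: it can be a union of two orbits of size $2$. On the cuboctahedron this possibility is vacuous, since there are only two bisector vertices in total, so four distinct equidistant endpoints cannot all be exceptional. On the icosidodecahedron, however, you note yourself that there are \emph{four} bisector vertices forming \emph{two} exceptional $G_u$-orbits, $(0,0,\pm\varphi)$ and $(0,\pm\varphi,0)$ --- and these four vertices all lie at the \emph{same} modified distance from $u=(\varphi,0,0)$, namely $1+d$. (The rotation $(x,y,z)\mapsto(y,z,x)$ carries the pair $(u,(0,0,\varphi))$ to $((0,0,\varphi),(0,\varphi,0))$, while $(x,y,z)\mapsto(z,x,y)$ carries $(u,(0,\varphi,0))$ to $((0,\varphi,0),(0,0,\varphi))$, so both exceptional orbits are at distance $d((0,\varphi,0),(0,0,\varphi))$ from $u$.) Thus the four edges at $u$ could go precisely to these four exceptional vertices, i.e.\ $P$ could have all edges of equal length $1+d$, and no orbit count can rule this out; indeed the set of all $60$ vertex pairs at distance $1+d$ is even transitive under $G(S)$, so nothing combinatorial about orbit sizes is violated.

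What actually excludes this case is the paper's dedicated geometric argument, which your proof is missing: the $30$ vertices of the icosidodecahedron decompose into the vertex sets of five pairwise disjoint regular octahedra (for $u=(\varphi,0,0)$ the relevant one is $\{(\pm\varphi,0,0),(0,\pm\varphi,0),(0,0,\pm\varphi)\}$), and every pair of vertices at distance $1+d$ lies in a common octahedron. Hence a structure with edge length $1+d$ has disconnected edge graph --- it is the compound of five octahedra --- and so is a compound, not a polyhedron. (The paper's proof is otherwise short: it quotes Lemma 6.1 of \cite{cs} and Lemma~\ref{lem3} to reduce to excluding $d$ on the cuboctahedron, which it kills because such faces would be central squares forcing $q_P=2$, and $1+d$ on the icosidodecahedron, killed as above; edge-transitivity then follows because, once $1+d$ is gone, every shell of equidistant vertices is a single stabilizer orbit --- precisely the property your argument assumed throughout, and which the paper notes parenthetically would fail if length $1+d$ were allowed.) Until you insert a connectivity argument of this kind, neither the list of admissible lengths for the icosidodecahedron nor the edge-transitivity claim is established in that case; the deferred shortest-path computations for the six generic shells are routine by comparison and are not the issue.
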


\begin{proof}
Since there are no edges between opposite vertices (by Lemma 6.1 of \cite{cs}), then by Lemma~\ref{lem3} the only possible edge lengths for $P$, other than those specified in Lemma~\ref{lem9}, are $d$ when $S$ is the cuboctahedron, and $1+d$ when $S$ is the icosidodecahedron. If $S$ is a cuboctahedron and $P$ has an edge length of $d$, then every face of $P$ would be a square, coplanar with the center of $S$, necessitating $q_{P} = 2$, which is impossible. If $S$ is an icosidodecahedron and $P$ has an edge length of $1+d$, then the vertices of $S$ are coincident with the vertices of five regular octahedra, no two of which share a vertex. Any edge of $P$, of length $1+d$, from a specified vertex $v$, will have its endpoint coincide with a vertex on the octahedron that contains $v$. $P$ can thus be a compound, but not a polyhedron. Thus the only possible edge length combinations are those specified in Lemma~\ref{lem9}. To confirm that $G(P)$ is edge transitive, we observe that this modified definition of edge length of $P$ maintains the property that all vertices equidistant from a given vertex, $v$, are equivalent under the vertex stabilizer of $v$. (Note that this would not be true if $P$ could have edge length $1+d$ on the vertices of an icosidodecahedron.)
\end{proof}

We now consider the action of $G^{+}(P) = G^{+}(S)$ on edges of $P$. As noted above, any pair of vertices of $S$ separated by a given distance (the graph distance when $S$ is a Platonic solid, or as defined above when $S$ is the cuboctahedron or icosidodecahedron) is equivalent under $G(P)$ = $G(S)$ to any other pair of vertices the same distance apart. However, this no longer remains true for equivalence under the rotation subgroup $G^{+}(P)$. Indeed, it fails precisely when $S$ is a dodecahedron and the edge length is $2$ or $3$, or when $S$ is a cuboctahedron or an icosidodecahedron. These are exactly the cases for which all edges of $P$ have the same length.

Recall that $P$ has $|\Gamma(P)|/4 = |G^{+}(P)|$ edges. The length of an orbit of an edge, $e$, under $G^{+}(P)$ equals the index (in $G^{+}(P)$) of the stabilizer of $e$ in $G^{+}(P)$. If a nontrivial element $\psi \in G(P)$ stabilizes $e$, then either $\psi$ fixes $e$ pointwise and must be the reflection in the plane through $e$ and the center of $P$, or $\psi$ interchanges the endpoints of $e$ and must be either the reflection in the perpendicular bisector of $e$ or the half-turn about the midpoint of $e$. Thus the stabilizer of $e$ in $G^{+}(P)$ has order $1$ or $2$. Hence the orbit of $e$ under $G^{+}(P)$ consists of either all edges of $P$ or one-half of them.

We now have two possible scenarios. Either $G^{+}(P)$ acts transitively on the edges of $P$ (which occurs when the stabilizer of at least one edge is trivial), or there are precisely two edge orbits of the same size under $G^{+}(P)$ (which occurs when all edge stabilizers of $P$ are generated by the half-turn about the midpoint of the edge). Note that if $G^{+}(P)$ acts edge transitively, then it acts sharply edge transitive (since the edge stabilizer is trivial).

Suppose $e$ is any edge of $P$ with vertices $u$ and $v$. Consider the corresponding `directed' edge, denoted by $\{u,v\}$, obtained by equipping $e$ with a `direction' pointing from $u$ to $v$. There are two directed edges associated with $e$, namely $\{u,v\}$ and $\{v,u\}$. The stabilizer of $e$ in $G^{+}(P)$ is nontrivial if and only if these two directed edges, $\{u,v\}$ and $\{v,u\}$, are equivalent under $G^{+}(P)$. In this case the two directed edges associated with any edge of $P$ are equivalent under $G^{+}(P)$, since the corresponding edge stabilizer in $G^{+}(P)$ is nontrivial as well. In other words, if the edge stabilizer of one, and hence of each, edge is trivial, then no edge of $P$ can be `inverted' modulo $G^{+}(P)$. In this case, if we begin with a directed edge and take its (directed) images under $G^{+}(P)$, we obtain a directed copy of the edge graph of $P$.

Two directed edges $\{u,v\}$ and $\{u',v'\}$ of $P$ are said to have the \emph{same orientation} if they are equivalent under $G^{+}(P)$; that is, if there exists an element $\phi \in G^{+}(P)$ such that $\phi(u) = u'$ and $\phi(v) = v'$. Otherwise the two directed edges are said to have \textit{opposite orientation}. There are always two transitivity classes of directed edges under $G^{+}(P)$. If $G^{+}(P)$ is transitive on the (undirected) edges of $P$, then, for each edge of $P$, the two corresponding directed edges lie in different transitivity classes under $G^{+}(P)$; that is, each transitivity class of directed edges contains exactly one of the two directed edges associated with any edge of $P$. In this case the two transitivity classes are the orbits, under $G^{+}(P)$, of the pair $\{u,v\}$ and $\{v,u\}$ of directed edges associated with any given edge of $P$. On the other hand, if $G^{+}(P)$ is not transitive on the (undirected) edges of $P$, then each of the two transitivity classes of (undirected) edges of $P$ gives rise to a single transitivity class of directed edges of $P$ of twice the size. In this case the two directed edges associated with an edge of $P$ always lie in the same transitivity class of directed edges.

If for one (and hence every) edge of $P$ with vertices $u$ and $v$, the corresponding directed edges $\{u,v\}$ and $\{v,u\}$ have opposite orientation (i.e. are not equivalent under $G^{+}(P)$), then we say $P$ has \emph{directed type}. If for one (and hence every) edge of $P$ with vertices $u$ and $v$, the corresponding directed edges $\{u,v\}$ and $\{v,u\}$ have the same orientation (are equivalent under $G^{+}(P)$), then $P$ has two edge transitivity classes under $G^{+}(P)$, and we say $P$ has \emph{bicolor type}.

P has directed type if $S$ is a dodecahedron and the edge length is $2$; or if $S$ is a cuboctahedron and the edge length is $1$; or if $S$ is an icosidodecahedron and the edge length is $1, 3,$ or $d$. $P$ has bicolor type if $S$ is a dodecahedron and the edge length is $3$; or if $S$ is a cuboctahedron and the edge length is $2$; or if $S$ is an icosidodecahedron and the edge length is $2, 4,$ or $2d$. Clearly, if $P$ has a Petrie dual polyhedron, it will have the same vertices and edge lengths as $P$, and so will have the same type.

It is clear from the definition of edge orientation that every regular polyhedron of index $2$ with vertices on one orbit and all edges having the same length is of either directed or bicolor type.

The relative orientation of consecutive edges of a face boundary of $P$ is determined by the change of direction between the edges, as specified in the following lemma.

\begin{lemma}
\label{lem10} When $S$ is a cuboctahedron or an icosidodecahedron, then if two consecutive edges on a face boundary of $P$ have a change of direction represented by $r$ or $l$, the two corresponding directed edges have the same orientation if $P$ has directed type, and opposite orientation if $P$ has bicolor type. If the change of direction between the edges is represented by $f$, the two corresponding directed edges have opposite orientation if $P$ has directed type, and have the same orientation if $P$ has bicolor type.

When $S$ is a dodecahedron and the edge length of $P$ is $2$ or $3$, then if two consecutive edges on a face boundary of $P$ have a change of direction represented by $hr$, $f$, or $hl$, the two corresponding directed edges have the same orientation if $P$ has directed type, and opposite orientation if $P$ has bicolor type. If the change of direction between the edges is represented by $sr$ or $sl$, the two corresponding directed edges have opposite orientation if $P$ has directed type, and the same orientation if $P$ has bicolor type.
\end{lemma}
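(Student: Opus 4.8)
The plan is to argue locally at a single vertex $v$, comparing the two transitivity classes of directed edges under $G^{+}(P)$ with the change-of-direction labels. Write $o$ for the centre of $P$ and let $\psi_{v}$ generate the stabiliser of $v$ in $G^{+}(P)=G^{+}(S)$, so that $\psi_{v}$ is the rotation about the axis $ov$; this stabiliser has order $2$ (a half-turn) when $S$ is a cuboctahedron or icosidodecahedron, and order $3$ (a $120^{\circ}$ rotation) when $S$ is a dodecahedron. Since there are exactly two transitivity classes of directed edges, fix a $G^{+}(P)$-invariant $2$-colouring $c$ of the directed edges taking both values; by definition two directed edges have the same orientation precisely when they share the colour $c$. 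I would then record the one fact that separates the two types of $P$: reversing a directed edge preserves its colour when $P$ has bicolor type and switches it when $P$ has directed type. Thus, setting $\epsilon=0$ in the bicolor case and $\epsilon=1$ in the directed case, $c(u\to v)=c(v\to u)+\epsilon\pmod 2$ for every edge $\{u,v\}$ of $P$.

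The key step is to show that the two $\psi_{v}$-orbits of outgoing arrows at $v$ are exactly the two colour classes. Each orbit is monochromatic: since $\psi_{v}\in G^{+}(P)$ fixes $v$, it sends $(v\to w)$ to $(v\to\psi_{v}w)$, so $c(v\to\cdot)$ is constant on the $\psi_{v}$-orbit of $w$. Because no edge of $P$ joins antipodal vertices of $S$ (Lemma 6.1 of \cite{cs}), $\psi_{v}$ fixes none of the $q_{P}$ neighbours of $v$, and so splits them into two orbits, of size $2$ each when $q_{P}=4$ and of size $3$ each when $q_{P}=6$. I would then count, for fixed $v$, the colour-$0$ arrows with tail $v$; by vertex-transitivity of $G^{+}(P)$ this number $k$ is independent of $v$. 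Summing over all $f_{0}$ vertices counts every colour-$0$ arrow once, and the total number of colour-$0$ arrows is $f_{1}$ in both cases (an orbit of $|G^{+}(P)|=f_{1}$ arrows in the directed case; both directions of an edge-orbit of size $f_{1}/2$ in the bicolor case). Hence $f_{0}k=f_{1}$ and $k=f_{1}/f_{0}=q_{P}/2$, namely $2$ when $q_{P}=4$ and $3$ when $q_{P}=6$ — exactly the size of one $\psi_{v}$-orbit. So one orbit carries colour $0$ and the other colour $1$, and the two $\psi_{v}$-orbits coincide with the two orientation classes.

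It remains to match the direction symbols to the orbits, and this is where the two geometries diverge. When $\psi_{v}$ is the half-turn (cuboctahedron, icosidodecahedron) it carries the incoming neighbour $u$ to its straight-ahead continuation, so $\psi_{v}u$ is the forward neighbour; hence the orbit of $u$ contributes the single outgoing direction $f$, while the complementary orbit contributes the two side directions $r$ and $l$. When $\psi_{v}$ is the $120^{\circ}$ rotation (dodecahedron) it advances the tangent direction at $v$ by $120^{\circ}$, carrying the backward direction (at $180^{\circ}$ from forward) to directions $60^{\circ}$ from forward; thus the orbit of $u$ contributes the two soft turns $sr,sl$, and the complementary orbit contributes $hr,f,hl$. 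Now write $X=c(v\to u)$; every outgoing arrow in the orbit of $u$ has colour $X$, every outgoing arrow in the other orbit has colour $X+1$, and $c(u\to v)=X+\epsilon$. Hence $(u\to v)$ and an outgoing $(v\to w)$ share a colour exactly when either $w$ lies in the orbit of $u$ and $\epsilon=0$, or $w$ lies in the other orbit and $\epsilon=1$. Reading `same colour' as `same orientation' yields all four assertions at once, the forward direction landing on opposite sides of the dichotomy in the two geometries precisely because it shares $u$'s orbit only for the half-turn.

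The main obstacle is the colouring/counting step rather than the direction bookkeeping: one must be certain that $G^{+}(P)$ has exactly two directed-edge classes and that the colour-$0$ arrows are equidistributed over the vertices, which is exactly where vertex-transitivity of $G^{+}(P)$, the edge count $f_{1}=|G^{+}(P)|$, and the absence of edges between antipodal vertices are used. The only strictly geometric inputs are that $\psi_{v}u$ is the forward neighbour in the half-turn case and that the soft turns sit $60^{\circ}$ from forward under the $120^{\circ}$ rotation; both are immediate once the edges at $v$ are projected to the tangent plane at $v$, where $\psi_{v}$ acts as a rotation by $180^{\circ}$ or $120^{\circ}$ respectively, so that no case-by-case inspection of the individual edge lengths is required.
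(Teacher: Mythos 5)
Your proof is correct and takes essentially the same route as the paper's: a local analysis at a vertex $v$ in which the rotation generating the stabilizer of $v$ in $G^{+}(P)$ splits the $q_{P}$ outgoing directed edges into two classes (the paper phrases this as ``only alternate neighbours in cyclic order are equivalent under $G^{+}(P)$''), the direction symbols are then matched to these classes, and reversal of an edge is handled by the definition of directed/bicolor type. Your counting step ($f_{0}k=f_{1}$, hence $k=q_{P}/2$) merely re-derives what the paper asserts directly, and could be replaced by the immediate observation that any element of $G^{+}(P)$ identifying two arrows with tail $v$ must fix $v$ and so lies in $\langle\psi_{v}\rangle$, making the $\psi_{v}$-orbits exactly the orientation classes at $v$.
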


\begin{proof}
By Lemma~\ref{lem2}, $q_{P}$ is $6$ if $S$ is a dodecahedron, and is $4$ if $S$ is a cuboctahedron or an icosidodecahedron. $P$ has $q = q_{P}$ directed edges from any given vertex $v$, namely $\{v,u_{1}\}$, $\{v,u_{2}\}$, \ldots , $\{v,u_{q}\}$, listed in cyclic (counterclockwise) order about $v$ on $S$. These directed edges are all equivalent under $G(P)$, but only alternate ones are equivalent under $G^{+}(P)$ and so have the same orientation. Thus $\{v,u_{i}\}$ and $\{v,u_{j}\}$ have the same orientation if and only if $i \equiv j (mod 2)$. If $\{u_{i},v\}$ and $\{v,u_{j}\}$ are two consecutive edges of a face boundary of $P$, then a change of direction represented by $r$ or $l$, if $S$ is a cuboctahedron or an icosidodecahedron, or by $hr$, $f$, or $hl$, if $S$ is a dodecahedron, translates into the condition $i \equiv j+1 (mod 2)$, whereas a change of direction represented by $f$, if $S$ is a cuboctahedron or an icosidodecahedron, or by $sr$ or $sl$, if $S$ is a dodecahedron, gives $i \equiv j (mod 2)$. The lemma follows when we recall that, by definition, $\{v,u_{i}\}$ and $\{u_{i},v\}$ have the same orientation (i.e., are equivalent under $G^{+}(P)$) if and only if $P$ is of bicolor type.
\end{proof}

\section{Directed Type}
\label{dirty}

We first find all polyhedra of index $2$ with directed type. As noted above, if $P$ has directed type then either $S$ is a dodecahedron and the edge length is $2$; or $S$ is a cuboctahedron and the edge length is $1$; or $S$ is an icosidodecahedron and the edge length is $1$, $3$, or $d$.

If $P$ has directed type, then $G_{F}^{+}(P)$ does not contain automorphisms of the form $\rho_{e}(F)$, as $\rho_{e}(F)$ changes the direction of the edge that is flipped, while preserving the orientation. Similarly, $\rho_{v}(F)$ results in opposite orientation for the edges of $F$ that contain $v$, while $\sigma_{1}(F)$ and $\sigma_{1}^{2}(F)$ preserve the orientation of rotated edges. From these considerations, and by referring back to Lemma~\ref{lem4}, we can tabulate all possibilities for $F$, together with allowable orientation of its edges, where $x$ and $y$ represent opposite orientations. Since $\sigma_{1}^{4}(F) \in G_{F}^{+}(P)$, it is sufficient to record the orientation pattern of faces by a string containing four entries.

\begin{table}[htb]
\centering
\begin{tabular}{|c|c|c|c|} \hline
If $G_{F}^{+}(P)$ contains &then $F$ has &and orientation pattern & \# face orbits \\
automorphisms of &shape\ldots &(for the directed &under $G^{+}(P)$ \\
the form\ldots	&&boundary of $F$)	&\\[.05in]
\hline
\hline
$\sigma_{1}(F)$, &$[a,a,a,a]$	&$.x.x.x.x.$ &2 \\
but not $\rho_{v}(F)$	&&&\\
\hline
$\sigma_{1}^{2}(F)$ and $\rho_{v}(F)$, &$[f,f,f,f]$	&$.x.y.x.y.$ &2 \\
but not $\sigma_{1}(F)$	&&&\\
\hline
$\sigma_{1}^{2}(F)$, &$[a,b,a,b]$	&$.x.x.x.x.$ or $.x.y.x.y.$	&1 \\
but not $\sigma_{1}(F)$ or $\rho_{v}(F)$ &&&\\
\hline
$\rho_{v}(F)$, &$[a,f,a',f]$ &$.x.x.y.y.$ or $.x.y.y.x.$ &1 \\
but not $\sigma_{1}^{2}(F)$ &or $[f,a,f,a']$ &or $.x.y.x.y.$ &\\
\hline
\end{tabular}
\caption{All possible face shapes when $P$ has directed type.}
\label{tab:DirectedType}
\end{table}

\begin{theorem}
\label{thm2}
There are exactly two finite regular polyhedra of index $2$ with directed type whose vertices coincide with those of a regular dodecahedron. These have edges of length $2$. One of these has type $\{5, 6\}$, with genus = $9$, and is orientable with planar faces in two orbits under $G^{+}(S)$, being pentagrams of shape $[hl,hl,hl,hl]$ and convex pentagons of shape $[f,f,f,f]$. Its Petrie dual has type $\{4, 6\}$, with genus = $12$, and is non-orientable with non-planar faces in one orbit under $G^{+}(S)$, and of shape $[hl,f,hl,f]$.
\end{theorem}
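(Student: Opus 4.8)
The plan is to push the directed-type analysis of Table~\ref{tab:DirectedType} through the specific geometry of the dodecahedron with edge length $2$, in the same spirit that the proof of Theorem~\ref{thm1} pushed Lemma~\ref{lem8} through the Platonic cases, and then to recognize the two surviving structures and their mutual Petrie duality. First I would specialize the candidate list. With $S$ a dodecahedron and edge length $2$ we have $q_{P}=6$ by Lemma~\ref{lem2}, so the five directions $hr,sr,f,sl,hl$ are available; since $q_{S}=3$ is odd, no half-turn $\rho_{v}(F)$ about a vertex lies in $G^{+}(S)$, so rows $2$ and $4$ of Table~\ref{tab:DirectedType} are vacuous and only $[a,a,a,a]$ and $[a,b,a,b]$ can occur. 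Because $\sigma_{1}(F)\in G^{+}(P)$ preserves edge orientation, a face with full $\sigma_{1}$-symmetry has orientation pattern $.x.x.x.x.$, and by Lemma~\ref{lem10} this forces its repeated direction into $\{hr,f,hl\}$, so $[sr,sr,sr,sr]$ and $[sl,sl,sl,sl]$ are excluded at once. Using the mirror $hr\leftrightarrow hl$, $sr\leftrightarrow sl$ to pair a face with its reflected image, the homogeneous candidates collapse to $[hl,hl,hl,hl]$ and $[f,f,f,f]$, and the alternating candidates to $[hl,f,hl,f]$, $[hr,hl,hr,hl]$, and $[sr,sl,sr,sl]$.

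Next I would realize each candidate directly on the dodecahedron, tracing a boundary of length-$2$ edges with the prescribed turns, as in Figure~\ref{fig1}. The shape $[hl,hl,hl,hl]$ closes into the pentagram inscribed in a face $A_{1}A_{2}A_{3}A_{4}A_{5}$ of $S$, namely the cycle $A_{1}A_{3}A_{5}A_{2}A_{4}$; the shape $[f,f,f,f]$ closes into the convex planar pentagon carried by a ring of five vertices adjacent to a face (coplanar by the five-fold symmetry); and $[hl,f,hl,f]$ closes into a skew quadrilateral. For the remaining two, $[hr,hl,hr,hl]$ and $[sr,sl,sr,sl]$, I expect the traced boundary either to revisit a vertex or to fail to be a simple cycle, eliminating them exactly as seven of the sixteen structures were eliminated in Theorem~\ref{thm1}; the finer check that the underlying map fails to be combinatorially regular (that $\rho_{1}$ is not well defined) I would confirm by direct inspection, as there deferred to the constructions in \cite{cut}.

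Now the decisive point. Neither the pentagram nor the convex pentagon can stand alone, since each yields only three faces per vertex rather than $q_{P}=6$; but a count of vertex–face incidences shows that at every vertex three pentagrams and three convex pentagons fit together, so the two homogeneous shapes must be assembled jointly. I would verify that the $24$ faces ($12$ pentagrams and $12$ pentagons, one of each centred on every five-fold axis through a pair of opposite faces of $S$) realize a single regular map of type $\{5,6\}$ faithfully, with $\rho_{1}$ well defined. Since a pentagram and a convex pentagon are not congruent and each shape is carried to itself by reflection, these form two orbits under $G(P)$ while $\Gamma(P)$ stays flag-transitive, so the index is exactly $2$; as in Theorem~\ref{thm1} the combinatorial regularity would be confirmed by the explicit computation of Appendix~2 of \cite{cut}. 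The surviving alternating shape $[hl,f,hl,f]$ I would then identify with the Petrie dual: its quadrilateral faces are the Petrie polygons of the $\{5,6\}$ polyhedron, which alternate a pentagram turn $hl$ and a pentagon turn $f$ at successive vertices, so by Lemma~3.2 of \cite{cs} it is a regular polyhedron of index $2$ of type $\{4,6\}$ on the same vertices and edges. Finally I would read off invariants: both have $f_{0}=20$, $f_{1}=60$, so $f_{2}=24$ and $f_{2}=30$, giving Euler characteristics $-16$ and $-10$, hence genus $9$ (orientable, planar faces) and genus $12$ (non-orientable, skew faces).

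The main obstacle is the assembly-and-regularity step for the $\{5,6\}$ polyhedron: unlike every case in Theorem~\ref{thm1}, this object is not built from a single face shape but from two geometrically distinct yet combinatorially equivalent faces, so the crux is to show that the pentagrams and pentagons interlock into a faithful realization of one flag-transitive map with $\rho_{1}$ well defined and symmetry index exactly $2$. Establishing the completeness of the elimination—that $[hr,hl,hr,hl]$ and $[sr,sl,sr,sl]$ genuinely fail and that no admissible shape has been overlooked—is the remaining technical burden, and, as in Theorem~\ref{thm1}, I would expect the finest combinatorial verifications to rest on the explicit constructions in \cite{cut}.
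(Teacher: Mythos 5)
Your overall skeleton does match the paper's: restrict to rows $1$ and $3$ of Table~\ref{tab:DirectedType}, use Lemma~\ref{lem10} to force the turns into $\{hr,f,hl\}$ or $\{sr,sl\}$, identify the two-orbit structure as pentagram-plus-pentagon, and obtain $[hl,f,hl,f]$ as its Petrie dual. But two of your reduction steps are wrong, and they carry the real content of the enumeration. First, the mirror-pairing $hr\leftrightarrow hl$, $sr\leftrightarrow sl$ is invalid in the directed-type setting. A plane reflection sends left-oriented directed edges to right-oriented ones, so under the convention that the starting edge has left orientation, the only admissible traversal of the mirror image of a face is the \emph{reversed} traversal; mirroring and reversal each flip left/right turns, so the two flips cancel and the mirror image of a face of shape $[a,b,c,d]$ has shape $[d,c,b,a]$ with the \emph{same} symbols, not $[a',b',c',d']$. (This is exactly why the paper performs no mirror reduction here, in contrast to Theorem~\ref{thm1}, where each edge of $P$ lies in a reflection plane of $S$.) Concretely, $[hr,hr,hr,hr]$ is the vertex-figure triangle, with $p_{F}=3$, while $[hl,hl,hl,hl]$ is the face pentagram, with $p_{F}=5$: they are not mirror images. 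Likewise $[hr,f,hr,f]$ is a genuine candidate distinct from $[hl,f,hl,f]$; your list silently omits it, yet it must be eliminated. The paper does this via Lemma~\ref{lem7}: its Petrie dual would have to be an index-$2$ polyhedron with two face orbits of shapes $[hr,hr,hr,hr]$ and $[f,f,f,f]$, whose faces have $3$ and $5$ vertices respectively, which is impossible, the only admissible two-orbit structure being $[hl,hl,hl,hl] \& [f,f,f,f]$.

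Second, your elimination of $[hr,hl,hr,hl]$ and $[sr,sl,sr,sl]$ by ``tracing until the boundary revisits a vertex'' rests on a false expectation: both shapes trace \emph{simple} closed polygons, so no contradiction of that kind ever appears. A face of shape $[sr,sl,sr,sl]$ runs along the edges of one cube of the compound of five cubes inscribed in $S$ and is a skew hexagon (a Petrie polygon of that cube); the correct objection, which the paper makes, is that every boundary changing orientation at each vertex is confined to a single inscribed cube, so the resulting structure is a disconnected compound rather than a polyhedron. A face of shape $[hr,hl,hr,hl]$ traces a simple skew decagon (a Petrie polygon of the structure formed jointly by the vertex-figure triangles and the face pentagrams), so it too survives your test; the paper kills it by the same Petrie-dual argument as above, since its Petrie dual would need face orbits $[hr,hr,hr,hr]$ and $[hl,hl,hl,hl]$ with $p_{F}=3$ and $5$. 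Your fallback, checking ``by direct inspection'' that $\rho_{1}$ is not well defined, is precisely the work these arguments are needed to do, and your proposal contains no argument for it. So while you correctly identify the surviving pair, their Petrie duality, and the invariants, the completeness of the enumeration, which is the substance of the theorem, is not established by your reductions.
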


\begin{proof}
Let $P$ have directed type, with a face $F$, and $S$ be a dodecahedron. Then the edge length of $P$ is $2$, and by Lemma~\ref{lem4}, $G_{F}^{+}(P)$ does not contain any automorphism of the form $\rho_{v}(F)$. By Lemma~\ref{lem2}, $q_{P} = 6$, so there are five possible directions ($hr$, $sr$, $f$, $sl$, and $hl$) in which a face boundary of $P$ may continue at any vertex. Note that for a dodecahedron, unlike all other allowable structures for $S$, a direction of $f$ does not imply that the two connecting edges lie in a common plane through the origin.

We first suppose that $P$ has two face orbits under $G^{+}(P)$. From Table \ref{tab:DirectedType}, we see that this requires that $\sigma_{1}(F) \in G_{F}^{+}(P)$, so that all faces of $P$ have shape $[a,a,a,a]$, where $a$ must have a different value for each of the two face orbits, and that each directed edge in the face boundary has the same orientation. By Lemma~\ref{lem10}, the value of $a$ must be $hr$, $f$, or $hl$. We therefore determine the number of vertices, $p_{F}$, a face $F$ with shape $[hr,hr,hr,hr]$, $[f,f,f,f]$, or $[hl,hl,hl,hl]$ would have. Since $p_{F} = p_{P}$ for all faces of $P$, we require that two different face shapes have the same value of $p_{F}$. In order to determine these face shapes, we need to specify the orientation of the starting edge. We say that an edge $\{u,v\}$ of $P$ has left orientation if the corresponding edge path $\{u,w,v\}$ of length $2$ on $S$ bends left at $w$, and has right orientation if the path bends to the right at $w$, and we adopt the convention that the starting edge has left orientation.

Then a face $F$ of shape $[hr,hr,hr,hr]$ is a vertex-figure of $S$, with $p_{F} = 3$; a face of shape $[hl,hl,hl,hl]$ is a pentagram on a face of $S$, with $p_{F} = 5$; and a face of shape $[f,f,f,f]$ is a convex pentagon taking alternate vertices of a Petrie polygon of $S$, also with $p_{F} = 5$. Thus if $P$ has two face orbits under $G^{+}(P)$, it must have shape $[hl,hl,hl,hl] \& [f,f,f,f]$.

Suppose now that $P$ has one face orbit, so that $G_{F}^{+}(P)$ contains $\sigma_{1}^{2}(F)$ but not $\sigma_{1}(F)$. By Table \ref{tab:DirectedType}, $F$ has shape $[a,b,a,b]$ (allowing $a = b$), where the directed boundary of $F$ either has no change in orientation, or changes orientation at every vertex. However, the latter case can not occur, for if the boundary of $F$ changed orientation at every vertex, then the boundary would be confined to the edges of a cube inscribed in the dodecahedron that forms part of the regular compound of five cubes whose vertices coincide with the vertices of a regular dodecahedron. Any two of the cubes of this compound have only two (antipodal) vertices in common, and no edge of $F$ joins those two vertices. So if the boundary of $F$ changed orientation at every vertex, $P$ might be a compound but could not be a polyhedron.

Thus the directed boundary of $F$ has no change in orientation, and has shape $[a,b,a,b]$, where now $a$ and $b$ must differ since $\sigma_{1}(F) \notin G_{F}^{+}(P)$. By Lemma~\ref{lem10}, $a, b \in \{hr, f, hl\}$. Of the six possible face shapes, we have that $[hl,hr,hl,hr]$, $[f,hr,f,hr]$, and $[f,hl,f,hl]$ are the same faces as $[hr,hl,hr,hl]$, $[hr,f,hr,f]$, and $[hl,f,hl,f]$, with the respective starting edges being successive edges in the boundary of $F$.

By Lemma~\ref{lem6}, a polyhedron of shape $[hr,hl,hr,hl]$ has Petrie polygons of shape $[hr,hr,hr,hr]$ or $[hl,hl,hl,hl]$. We have just verified that these Petrie polygons do not revisit any vertex, so by Lemma 3.2 of \cite{cs}, the polyhedron of shape $[hr,hl,hr,hl]$ has a polyhedral Petrie-dual. By Lemma~\ref{lem7}, a polyhedron of shape $[hr,f,hr,f]$ or $[hl,f,hl,f]$ has a polyhedral Petrie-dual with two face orbits. The only candidate for any of these Petrie-duals is the structure of shape $[hl,hl,hl,hl] \& [f,f,f,f]$, which is the Petrie-dual of the structure of shape $[hl,f,hl,f]$.

It is the case that these two structures are both regular polyhedra of index $2$, and the verification of their combinatorial regularity is given in Appendix 2 of \cite{cut}. A representative face from each face orbit is shown in the left column of Figure \ref{fig3}.
\end{proof}

\begin{theorem}
\label{thm3}
There are no finite regular polyhedra of index $2$ with directed type whose vertices coincide with those of a regular cuboctahedron. There are exactly two finite regular polyhedra of index $2$ with directed type whose vertices coincide with those of a regular icosidodecahedron. These have edge length $d = (1+\sqrt{5})/2$. One has type $\{5, 4\}$, with genus = $4$, and is orientable with planar faces in two orbits under $G^{+}(S)$, the faces being convex pentagons of shape $[r,r,r,r]$ and pentagrams of shape $[l,l,l,l]$. The other is its Petrie dual with type $\{6, 4\}$, and genus = $12$, and is non-orientable with non-planar hexagonal faces in one orbit under $G^{+}(S)$, and is of shape $[r,l,r,l]$.
\end{theorem}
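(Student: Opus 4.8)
The plan is to follow the strategy of Theorem~\ref{thm2}, now with $q_P = 4$ so that only the three directions $r$, $f$, $l$ occur at a vertex. The new feature, absent from Theorem~\ref{thm2}, is that $q_S$ is even for both the cuboctahedron and the icosidodecahedron, so $G_F^{+}(P)$ may contain an automorphism $\rho_v(F)$; thus cases $(c)$ and $(f)$ of Lemma~\ref{lem4} are now available, while cases $(b)$ and $(e)$ remain excluded because $\rho_e(F) \notin G_F^{+}(P)$ for directed type. Combining Table~\ref{tab:DirectedType} with the orientation rule of Lemma~\ref{lem10} (that $r$ and $l$ preserve, and $f$ reverses, orientation), the admissible face shapes are $[r,r,r,r]$ and $[l,l,l,l]$ (from $\sigma_1(F)$), $[f,f,f,f]$ (from $\sigma_1^2(F)$ together with $\rho_v(F)$), $[r,l,r,l]$ (from $\sigma_1^2(F)$ alone), and $[r,f,l,f]$ and $[f,r,f,l]$ (from $\rho_v(F)$ alone). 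Fixing the convention that the starting edge has left orientation, I would, for each candidate and each admissible edge length, compute the number of vertices $p_F$ of the corresponding face by tracing its boundary directly on $S$, and then test the resulting structure for polytopality.

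For the cuboctahedron, directed type forces edge length $1$, so the edges of $P$ are precisely the edges of $S$. Tracing boundaries, $[r,r,r,r]$ and $[l,l,l,l]$ run along the triangular and square faces of $S$, with $p_F = 3$ and $p_F = 4$; since all faces of a map of type $\{p,4\}$ have the same size, these cannot serve as the two face orbits of an index $2$ polyhedron. The all-$f$ boundary $[f,f,f,f]$ runs along an equatorial hexagon, each edge of which lies in only one such hexagon, so it cannot furnish the two faces required at every edge; the remaining shapes are seen to self-intersect or to leave $\rho_1$ ill-defined. Hence no directed-type polyhedron has its vertices at those of a cuboctahedron.

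For the icosidodecahedron, directed type allows edge lengths $1$, $3$, and $d$. Lengths $1$ and $3$ are disposed of by the same tracing: for length $1$ the faces run along the triangles and pentagons of $S$ (sizes $3$ and $5$), and for length $3$ the candidate boundaries self-intersect, revisit a vertex, or close up into a compound rather than a polyhedron (as in the compound argument of Lemma~\ref{lem9}). For edge length $d$ the edges of $P$ are the pentagon diagonals of $S$, and direct construction gives $p_F = 5$ for both $[r,r,r,r]$ (a convex pentagon) and $[l,l,l,l]$ (a pentagram), and $p_F = 6$ for $[r,l,r,l]$ (a skew hexagon), whereas the shapes containing $f$ --- namely $[f,f,f,f]$, $[r,f,l,f]$, and $[f,r,f,l]$ --- are shown to self-intersect or to revisit a vertex, with $[f,f,f,f]$ also excluded by Lemma~\ref{lem5}. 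The two survivors, $[r,r,r,r] \& [l,l,l,l]$ and $[r,l,r,l]$, are by Lemma~\ref{lem6} Petrie duals of one another, and their combinatorial regularity is verified as in Theorem~\ref{thm1} (Appendix 2 of \cite{cut}).

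Finally the stated invariants follow by bookkeeping. With $f_0 = 30$ and $f_1 = 60$, the relation $p f_2 = 2 f_1$ gives the f-vectors $(30,60,24)$ for the type $\{5,4\}$ polyhedron and $(30,60,20)$ for the type $\{6,4\}$ polyhedron, whence $\chi = -6$ and $\chi = -10$ by Euler's formula. Inspection shows the pentagon--pentagram polyhedron, whose faces lie in two orbits under $G^{+}(S)$, to be orientable, so its genus is $4$; its hexagonal Petrie dual, with a single face orbit under $G^{+}(S)$, is non-orientable, so its genus is $12$. The main obstacle I anticipate is the elimination of the $f$-containing shapes (cases $(c)$ and $(f)$ of Lemma~\ref{lem4}): these have no counterpart in Theorem~\ref{thm2}, where $q_S$ is odd and $\rho_v(F)$ is unavailable, so they cannot simply be dropped, and ruling each of them out for every admissible edge length of both solids rests on the most delicate boundary-tracing, since it requires exhibiting an explicit self-intersection or repeated vertex rather than citing an earlier result.
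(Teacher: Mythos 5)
Your treatment of the two-face-orbit case (enumerate the shapes allowed by Table~\ref{tab:DirectedType} and Lemma~\ref{lem10}, trace each $[a,a,a,a]$ candidate, and demand that two distinct shapes share the same $p_F$) matches the paper, and your final bookkeeping is correct. The genuine gap is in how you eliminate the remaining candidates: the one-orbit shapes $[r,l,r,l]$, $[r,f,l,f]$, $[f,r,f,l]$ on both solids, and the edge-length~$3$ candidates on the icosidodecahedron. You propose to exhibit ``an explicit self-intersection or repeated vertex'' for each, but for several of these candidates no such thing exists. With edge length $1$ on the cuboctahedron, a face of shape $[r,l,r,l]$ is precisely a Petrie polygon of $S$: a skew octagon through eight \emph{distinct} vertices, which neither revisits a vertex nor crosses itself; the analogous zigzag occurs on the icosidodecahedron with edge length $1$. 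Likewise, for edge length $3$ on the icosidodecahedron the faces of shapes $[r,r,r,r]$, $[l,l,l,l]$, $[f,f,f,f]$ are a pentagram, a triangle, and a star-decagon $\{\frac{10}{3}\}$ --- all legitimate polygons with distinct vertices; interior crossings of a star polygon are not grounds for exclusion (the surviving $\{5,4\}$ polyhedron itself has pentagram faces). The correct reason these fail is that their vertex counts $5, 3, 10$ are pairwise distinct, so no two of them can be the two face orbits of one regular map. Your fallback claim that ``$\rho_1$ is ill-defined'' is the right kind of statement but is a nontrivial combinatorial verification for which you give no method, and your appeal to the compound argument of Lemma~\ref{lem9} misapplies it: that argument concerns edge length $1+d$, not length $3$.

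The paper closes exactly this gap without any delicate tracing, by a Petrie-duality reduction to the two-orbit case already settled. Having checked that every polygon of shape $[r,r,r,r]$ or $[l,l,l,l]$, for every admissible edge length on either solid, closes up without revisiting a vertex, it invokes Lemma 3.2 of \cite{cs} together with Lemma~\ref{lem6}: if $P$ had shape $[r,l,r,l]$, its Petrie dual $Q$ would again be a regular polyhedron of index $2$ of directed type, with shape $[r,r,r,r]$, $[l,l,l,l]$, or $[r,r,r,r] \& [l,l,l,l]$; the two-orbit analysis shows the only such polyhedron is the icosidodecahedral one with edge length $d$, so $P$ can only be its Petrie dual. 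Similarly the shapes $[a,f,a',f]$ and $[f,a,f,a']$ are eliminated by Lemma~\ref{lem7}: their Petrie dual would be a two-orbit polyhedron containing a face of shape $[f,f,f,f]$, impossible since $[f,f,f,f]$ never matches any other shape in vertex count. You should replace your case-by-case search for self-intersections by this reduction; as written, your elimination step would fail on the candidates named above.
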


\begin{proof}
Let $P$ have directed type, and $S$ be a cuboctahedron or an icosidodecahedron. By Lemma~\ref{lem2}, $q_{P} = 4$ so there are three possible directions $(r, f,$ and $l)$ in which a face boundary of a polyhedron may continue at any vertex.

Suppose first that $P$ has two face orbits under $G^{+}(P)$. From Table \ref{tab:DirectedType} we see then that all faces of $P$ have shape $[a,a,a,a]$, where $a$ must have a different value for each of the two face orbits. Note here that any face of shape $[r,r,r,r]$ or $[l,l,l,l]$ has all its directed boundary edges in the same orientation by Lemma~\ref{lem10}, and so can not be traversed in the opposite direction, as starting edges must have the same orientation; thus $[r,r,r,r]$ and $[l,l,l,l]$ represent different faces. We first determine the number of vertices, $p_{F}$, a face $F$ with shape $[r,r,r,r]$, $[f,f,f,f]$, or $[l,l,l,l]$ would need to have for a given edge length when $S$ is a cuboctahedron or an icosidodecahedron. All faces of $P$ must have the same value of $p_{F}$. 

For edge length $1$, any face of shape $[r,r,r,r]$ or $[l,l,l,l]$ would be a face of $S$ (we assume that the starting edge is such that $[r,r,r,r]$ represents a triangular face), and any face of shape $[f,f,f,f]$ would be a circumference of $S$, parallel to a triangular face when $S$ is a cuboctahedron, and parallel to a pentagon face when $S$ is an icosidodecahedron. Thus $[r,r,r,r]$, $[l,l,l,l]$, and $[f,f,f,f]$ each have a different value of $p_{F}$, namely $3, 4, 6,$ respectively, for the cuboctahedron, and $3, 5, 10,$ respectively, for the icosidodecahedron.

If $S$ is an icosidodecahedron and the edge length is $3$, then for appropriate choice of starting edge the faces with shape $[r,r,r,r]$ are pentagrams centered on pentagon faces of $S$, the faces with shape $[l,l,l,l]$ are triangles centered on triangle faces of $S$, and the faces with shape $[f,f,f,f]$ go three times round a circumference of $S$, parallel to a pentagon face of $S$, giving a star-decagon $\{\frac{10}{3}\}$ inscribed in the circumference. Thus the three face shapes again have different values of $p_{F}$, being $5, 3,$ and $10$, respectively.

Finally, if $S$ is an icosidodecahedron and the edge length is $d$, then the faces with shape $[r,r,r,r]$ and $[l,l,l,l]$ are pentagrams and pentagons, each centered on a pentagon face of $S$, and the faces with shape $[f,f,f,f]$ are planar hexagons contained in a circumference of $S$ that is parallel to a triangular face of $S$ (but does not run along edges of $S$). The face shapes $[r,r,r,r]$ and $[l,l,l,l]$ each have $p_{F} = 5$, and the face shape $[f,f,f,f]$ has $p_{F} = 6$.

It follows that if $S$ is a cuboctahedron or an icosidodecahedron, and $P$ has directed type with two face orbits under $G^{+}(P)$, then the only candidate structure for $P$ occurs when $S$ is an icosidodecahedron and $P$ has shape $[r,r,r,r] \& [l,l,l,l]$ with edge length of $d$.

Suppose now that $P$ has one face orbit under $G^{+}(P)$. By Lemma~\ref{lem5}, and the above, no Petrie polygon of $P$ has shape $[f,f,f,f]$, so, by Lemma~\ref{lem7}, no face of $P$ has shape $[a,f,b,f]$ or $[f,c,f,d]$. We then have, from Table \ref{tab:DirectedType}, that $G_{F}^{+}(P)$ contains $\sigma_{1}^{2}(F)$, but not $\sigma_{1}(F)$, for all $F$. Further, $P$ can not have a face of shape $[a,a,a,a]$, as all such faces were examined above, and in each case $G_{F}^{+}(P)$ contained $\sigma_{1}(F)$. Thus the only possibilities for the shape of $P$ are $[r,l,r,l]$ or $[l,r,l,r]$. These represent the same face, by Lemma~\ref{lem10}, with the respective starting edges being successive edges in the face boundary. Thus $[r,l,r,l]$ is the only possible face shape for which $G_{F}^{+}(P)$ contains $\sigma_{1}^{2}(F)$ but not $\sigma_{1}(F)$. We now find allowable edge lengths of $P$ by showing that if $P$ has shape $[r,l,r,l]$, for any given edge length, then the Petrie-dual, $Q$, of $P$ is also a regular polyhedron of index $2$. By Lemma 3.2 of \cite{cs}, it is sufficient to show that a Petrie polygon of $P$ does not revisit a vertex. From Lemma~\ref{lem6}, every Petrie polygon of $P$ has shape $[r,r,r,r]$ or $[l,l,l,l]$, and we have just checked all such polygons for all allowable edge lengths and found that none of them revisit a vertex. Thus $Q$ is a regular polyhedron of index $2$ with directed type, and by Lemma~\ref{lem6} it has shape $[r,r,r,r]$, $[l,l,l,l]$, or $[r,r,r,r] \& [l,l,l,l]$. We have already verified that there is only one possible such polyhedron (obtained when $S$ is an icosidodecahedron, and the edge length is $d$), and so the only candidate with one face orbit under $G^{+}(P)$ is its Petrie-dual.

It is the case that both these structures are regular polyhedra of index $2$, and the verification of their combinatorial regularity is given in Appendix 2 of \cite{cut}. Their representative faces are shown in the left column of Figure \ref{fig4}.
\end{proof}

\section{Bicolor Type}
\label{bicty}

We turn now to structures with bicolor type, which are the only remaining potential polyhedra. As noted earlier, $P$ has bicolor type if (and only if) $S$ is a dodecahedron and the edge length is $3$; or $S$ is a cuboctahedron and the edge length is $2$; or $S$ is an icosidodecahedron and the edge length is $2, 4,$ or $2d$.

We first eliminate the possibility that $G_{F}^{+}(P)$ contains automorphisms of the form $\rho_{e}(F)$ if $P$ has one face orbit under $G^{+}(P)$. We begin with a general lemma.

For an edge $e$ of $P$ we let $G_{e}(P)$ and ${G_{e}}^{+}(P)$ denote the stabilizer of $e$ in $G(P)$ and $G^{+}(P)$, respectively. Clearly, ${G_{e}}^{+}(P)$ is a subgroup of $G_{e}(P)$.

\begin{lemma}
\label{lem11} Let $P$ have all its vertices in one orbit under $G(P)$ and have all its edges of the same length. Then $G(P)$ is edge transitive and the stabilizer $G_{e}(P)$ of an edge $e$ has order $2$. In particular, if $P$ is of bicolor type then $G_{e}(P)$ is generated by a half-turn and equals ${G_{e}}^{+}(P)$; if $P$ is of directed type then $G_{e}(P)$ is generated by a plane reflection and ${G_{e}}^{+}(P)$ is trivial.
\end{lemma}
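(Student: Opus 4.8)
The plan is to pin down the order of $G_{e}(P)$ by a flag count combined with the edge-transitivity already established, and then to read off the directed/bicolor dichotomy from the list of edge-stabilizing isometries recorded just before Lemma~\ref{lem9}.

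First I would fix the ambient group orders. Since $P$ is a regular polyhedron of index $2$, its automorphism group $\Gamma(P)$ acts simply flag-transitively, and a regular polyhedron with $f_{1}$ edges has exactly $4f_{1}$ flags (each edge, being incident with two vertices and two faces, carries $2\cdot 2$ flags); hence $|\Gamma(P)| = 4f_{1}$ and $|G(P)| = |\Gamma(P)|/2 = 2f_{1}$. This is the same count recalled earlier as $f_{1} = |\Gamma(P)|/4 = |G^{+}(P)|$. Next I would invoke Lemma~\ref{lem9}, whose hypotheses coincide with those here, to assert that $G(P)$ is transitive on the $f_{1}$ edges of $P$. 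By orbit-stabilizer the edge orbit has size $|G(P)|/|G_{e}(P)| = 2f_{1}/|G_{e}(P)|$, and transitivity forces this to equal $f_{1}$, so $|G_{e}(P)| = 2$.

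To make the bound transparent I would also spell out the possibilities directly. For an edge $e$ with endpoints $u,v$, midpoint $m$, and centre $o$ of $P$, the only nontrivial isometries fixing $e$ setwise are the three listed before Lemma~\ref{lem9}: the reflection $\tau_{1}$ in the plane through $e$ and $o$ (fixing $e$ pointwise), the reflection $\tau_{2}$ in the perpendicular bisector of $e$, and the half-turn $\tau_{3}$ about the line $om$. Since $\tau_{1}\tau_{2} = \tau_{3}$, the set $\{1,\tau_{1},\tau_{2},\tau_{3}\}$ is a Klein four-group containing $G_{e}(P)$, so $|G_{e}(P)| \in \{1,2,4\}$. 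Because $P$ has only $f_{1}$ edges while $|G(P)| = 2f_{1}$, the orbit of $e$ cannot have the full size $2f_{1}$, excluding $|G_{e}(P)| = 1$; and $|G_{e}(P)| = 4$ would split the edges into two $G(P)$-orbits, contradicting the edge-transitivity of Lemma~\ref{lem9}. Hence $|G_{e}(P)| = 2$.

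For the ``in particular'' clauses I would write $g$ for the unique nontrivial element of $G_{e}(P)$, so $g \in \{\tau_{1},\tau_{2},\tau_{3}\}$, and note that among these only the half-turn $\tau_{3}$ is a rotation, and it is the only rotation that can stabilize $e$; thus ${G_{e}}^{+}(P)$ is nontrivial precisely when $g = \tau_{3}$. If $P$ has directed type, then by definition $\{u,v\}$ and $\{v,u\}$ are inequivalent under $G^{+}(P)$, equivalently (as noted before Lemma~\ref{lem9}) ${G_{e}}^{+}(P)$ is trivial; so $g \neq \tau_{3}$, whence $g$ is one of the plane reflections $\tau_{1},\tau_{2}$, $G_{e}(P) = \langle g\rangle$ is generated by a plane reflection, and ${G_{e}}^{+}(P) = G_{e}(P)\cap G^{+}(P)$ is trivial. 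If $P$ has bicolor type, then $\{u,v\}$ and $\{v,u\}$ are equivalent under $G^{+}(P)$, so ${G_{e}}^{+}(P)$ is nontrivial; being a nontrivial subgroup of the order-$2$ group $G_{e}(P)$ it must equal it, and its nontrivial element is $\tau_{3}$, giving $G_{e}(P) = {G_{e}}^{+}(P) = \langle\tau_{3}\rangle$ generated by a half-turn. The argument is essentially bookkeeping once these pieces are assembled; the only substantive external inputs are the edge-transitivity of Lemma~\ref{lem9} (needed to exclude $|G_{e}(P)| = 4$) and the prior classification of edge-fixing isometries. The one point requiring care—the mild ``obstacle''—is keeping $G(P)$ and $G^{+}(P)$ carefully distinct, since it is exactly the observation that $\tau_{3}$ is the single rotation fixing $e$ that couples the directed/bicolor distinction to the internal structure of $G_{e}(P)$.
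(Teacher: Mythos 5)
Your proposal is correct and takes essentially the same approach as the paper: edge-transitivity (Lemma~\ref{lem9}) together with the fact that $P$ has $|G(P)|/2$ edges gives $|G_{e}(P)|=2$ by orbit--stabilizer, and the directed/bicolor dichotomy is then read off from the classification of edge-stabilizing isometries (reflection in the plane through $e$ and the center, reflection in the perpendicular bisector, half-turn about the midpoint) recorded in Section~\ref{eopt} --- which, as a small point, appears just \emph{after} Lemma~\ref{lem9}, not before it. Your extra bookkeeping (the explicit flag count $|\Gamma(P)|=4f_{1}$ and the Klein four-group framing) is a harmless expansion of what the paper simply recalls, and your derivation of the bicolor case from ``the half-turn is the only rotation stabilizing $e$'' matches the paper's appeal to the definition of bicolor type.
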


\begin{proof}
As pointed out earlier, our assumptions on $P$ imply that $G(P)$ is edge transitive. Since $P$ has exactly $|G(P)|/2$ edges, the stabilizer in $G(P)$ of an edge $e$ must have order $2$. By definition, if $P$ is of bicolor type, then for every edge $e$, ${G_{e}}^{+}(P)$ is generated by a half-turn about the midpoint of $e$. Since $G_{e}(P)$ and ${G_{e}}^{+}(P)$ have the same order, they must coincide. On the other hand, if $P$ is of directed type, then ${G_{e}}^{+}(P)$ is trivial. Since the generating element of $G_{e}(P)$ is an involution, it must be either a plane reflection, a half-turn, or the central inversion. The last is impossible, since edges of $P$ can not pass through the center. Neither can it be a half-turn, since then it would lie in ${G_{e}}^{+}(P)$. Thus $G_{e}(P)$ is generated by the reflection in the perpendicular bisector of $e$.
\end{proof}

\begin{lemma}
\label{lem12} If $P$ has bicolor type, and $P$ has one face orbit under $G^{+}(P)$, then for any face, $F$, of $P$, $G_{F}^{+}(P)$ can not contain an automorphism of the form $\rho_{e}(F)$.
\end{lemma}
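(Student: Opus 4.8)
The plan is to argue by contradiction: suppose that some face $F$ of $P$ has $\rho_e(F)\in G_F^+(P)$, and extract a contradiction from the bicolor and one-face-orbit hypotheses. First I would pin down $\rho_e(F)$ geometrically. By the proof of Lemma~\ref{lem4}, $\rho_e(F)$ is realized as the half-turn about the axis through $o$ and the midpoint of $e$ that inverts $e$. By Lemma~\ref{lem11}, in the bicolor case the edge stabilizer $G_e(P)=G_e^+(P)$ is generated by precisely this half-turn, so $\rho_e(F)$ is that generator. Since it stabilizes $e$ it permutes the two faces meeting along $e$, and as it fixes $F$ it must fix the second face as well. Viewing $G(P)$ as a subgroup of $\Gamma(P)$ and comparing the action on the base flag $(F_0,F_1,F_2)$, this half-turn acts exactly as $\rho_0$; hence $\rho_0\in G^+(P)\subseteq G(P)$. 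Writing $\chi\colon\Gamma(P)\to\Gamma(P)/G(P)\cong\mathbb{Z}_2$ for the quotient homomorphism, this says $\chi(\rho_0)=0$.

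Next I would exploit the one-face-orbit hypothesis. The combinatorial stabilizer of $F$ is the dihedral group $\langle\rho_0,\rho_1\rangle$ of order $2p$, and $G_F^+(P)=G^+(P)\cap\langle\rho_0,\rho_1\rangle$ has order $p/2$; a counting of double cosets then shows that $P$ having a single face orbit under $G^+(P)$ is equivalent to $G^+(P)\langle\rho_0,\rho_1\rangle=\Gamma(P)$. Applying $\chi$ and using $\chi(G^+(P))=0$ forces $\chi(\langle\rho_0,\rho_1\rangle)=\mathbb{Z}_2$, so $\chi(\rho_1)=1$. Consequently $\chi(\sigma_1^2(F))=2(\chi(\rho_0)+\chi(\rho_1))=0$, whence $\sigma_1^2(F)\in G(P)$; since it also fixes $F$, it lies in the stabilizer of $F$ in $G(P)$, a group of order $p$ properly containing $G_F^+(P)$.

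The key step is then the resulting shape constraint. Because $\rho_e(F)\in G_F^+(P)$ with one face orbit, we are necessarily in case~(e) of Lemma~\ref{lem4}, so $\sigma_1^2(F)\notin G_F^+(P)$; hence $\sigma_1^2(F)$ fixes $F$ but is realized by an \emph{orientation-reversing} isometry. Such an isometry carries the boundary of $F$ two steps forward while reversing the sense of every turn, so the change of direction two positions along is the reverse of the current one; that is, the shape of $F$ must have the form $[c,d,c',d']$. Matching this against the two shapes allowed in case~(e), namely $[a,a',b,b']$ and $[a,b,b',a']$, and discarding $[f,f,f,f]$ by Lemma~\ref{lem5}, leaves only $F$ of shape $[a,a',a',a]$ or $[a,a,a',a']$ with $a\neq f$.

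The main obstacle is the final elimination of these two residual shapes, which is where the argument becomes genuinely case-dependent. I expect to rule them out in the same spirit as their counterparts were eliminated in the unequal-length analysis of Theorem~\ref{thm1}: for equal edge lengths on a dodecahedron, cuboctahedron, or icosidodecahedron, the edge-coloring forced by Lemma~\ref{lem10}, together with the requirements that no face revisit a vertex and that $\rho_1$ be well-defined, should show that a face of shape $[a,a',a',a]$ or $[a,a,a',a']$ either self-intersects or fails to bound a combinatorially regular map. Carrying out this finite inspection over the bicolor configurations closes the contradiction and establishes the lemma.
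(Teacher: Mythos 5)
Your reduction is sound as far as it goes, but the proof is incomplete. The steps up through the shape constraint are correct: your coset/homomorphism derivation of $\sigma_1^2(F)\in G_F(P)$ is a legitimate (if more elaborate) substitute for the paper's direct appeal to Lemmas 5.1 and 5.3 of \cite{cs}, which give $|G_F(P)|=2|G_F^{+}(P)|=p$ at once, and your identification of the residual shapes $[a,a',a',a]$ and $[a,a,a',a']$ is valid. The genuine gap is that the lemma is then made to rest on a case-by-case geometric inspection of these shapes over all bicolor configurations --- $S$ a dodecahedron with edge length $3$, a cuboctahedron with edge length $2$, or an icosidodecahedron with edge length $2$, $4$, or $2d$ --- and that inspection is never carried out; you only assert that it ``should'' go through in the spirit of Theorem~\ref{thm1}. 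In Theorem~\ref{thm1} the analogous eliminations were not automatic: they were established by explicit constructions (Figure~\ref{fig1} and Appendices 2 and 3 of \cite{cut}), and several candidates there failed only for the subtler reason that $\rho_1$ is not well-defined, a global condition that cannot be read off from a single face. Nothing of that kind is exhibited here, so as written the proposal reduces the lemma to an unverified finite check.

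The gap is also avoidable, because you had already assembled the ingredients of the paper's one-step finish. You know $\rho_e(F)\in G_F^{+}(P)$ is a half-turn, and that $\sigma_1^{2}(F)$ lies in $G_F(P)$ but not in $G_F^{+}(P)$, i.e., is realized by an improper isometry. Writing $\sigma_1^{2}(F)=\rho_e(F)\rho_{e'}(F)$, where $e'$ is the edge of $F$ adjacent to $e$, you get $\rho_{e'}(F)=\rho_e(F)\sigma_1^{2}(F)\in G_F(P)\setminus G_F^{+}(P)$. Thus $\rho_{e'}(F)$ is an improper involution that stabilizes $e'$ and swaps its endpoints; it cannot be the central inversion (no edge of $P$ passes through the center), so it must be the reflection in the perpendicular bisector plane of $e'$. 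This contradicts Lemma~\ref{lem11}, by which every edge stabilizer in $G(P)$ of a bicolor-type polyhedron has order $2$ and is generated by a half-turn. That contradiction disposes of the lemma with no shape analysis and no configuration-dependent inspection --- and it is exactly how the paper argues.
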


\begin{proof}
By Lemmas 5.1 and 5.3 of \cite{cs}, if $P$ has one face orbit under $G_{F}^{+}(P)$ and hence under $G(P)$, then $|G_{F}(P)| = 2|G_{F}^{+}(P)| = p$, so necessarily $\sigma_{1}^{2}(F) \in G_{F}(P)$. Now suppose $G_{F}^{+}(P)$ contains an automorphism of the form $\rho_{e}(F)$. Then by Lemma~\ref{lem4}, $\sigma_{1}^{2}(F) \notin G_{F}^{+}(P)$. However, $\sigma_{1}^{2}(F) = \rho_{e}(F)\rho_{e'}(F)$, where $e'$ is an edge of $F$ adjacent to $e$, and $\rho_{e'}(F) \in G_{F}(P)$ since both $\sigma_{1}^{2}(F)$ and $\rho_{e}(F) \in G_{F}(P)$, but $\rho_{e'}(F) \notin G_{F}^{+}(P)$ since $\rho_{e}(F)$ lies in $G_{F}^{+}(P)$ but $\sigma_{1}^{2}(F)$ does not. This implies that $\rho_{e'}(F)$ is not a half-turn; it must therefore be a plane reflection. This contradicts Lemma~\ref{lem11}, since all stabilizers of edges in $G(P)$ must be generated by half-turns if $P$ is of bicolor type.
\end{proof}

We next eliminate the possibility that $G_{F}^{+}(P)$ contains $\sigma_{1}(F)$, or automorphisms of the form $\rho_{v}(F)$.

\begin{lemma}
\label{lem13} If $P$ has bicolor type, and $F$ is a face of $P$, then $G_{F}^{+}(P)$ contains $\sigma_{1}^{2}(F)$, but does not contain $\sigma_{1}(F)$, or automorphisms of the form $\rho_{v}(F)$. Moreover, adjacent edges in the boundary of $F$ have opposite orientation.
\end{lemma}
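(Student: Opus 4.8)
The plan is to run through the six possibilities for the pair $(G_F^+(P),\text{shape of }F)$ supplied by Lemma~\ref{lem4} and to show that, under the bicolor hypothesis, only cases $(b)$ and $(d)$ can occur; since both of these contain $\sigma_1^2(F)$ and neither contains $\sigma_1(F)$ nor an automorphism $\rho_v(F)$, the first three assertions are then automatic. The organising device is Lemma~\ref{lem10}: for bicolor type the changes of direction that make two consecutive directed edges share the \emph{same} orientation are exactly $f$ (when $S$ is a cuboctahedron or icosidodecahedron) and $sr,sl$ (when $S$ is a dodecahedron); I will call these the \emph{even} directions. In this language the final clause --- that adjacent edges of $F$ have opposite orientation --- says precisely that no face of $P$ uses an even direction, and the whole lemma follows once this single fact is proved.

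For the cuboctahedron and icosidodecahedron the even direction is the forward direction $f$, and I would argue as follows. If a face $F$ is forward at a vertex $v$, then its two edges $e,e'$ at $v$ are the opposite pair of the (rectangular) vertex figure, and the half-turn $\rho_v\in G^+(S)$ about the axis through $v$ and the centre interchanges them. Since no two faces of $P$ share two consecutive edges, a consecutive edge-pair determines its face uniquely; as $\rho_v$ sends the pair $(e,e')$ of $F$ to $(e',e)$ it must fix $F$, so $\rho_v(F)\in G_F^+(P)$ and $F$ falls under case $(c)$ or $(f)$. I would dispose of these through Lemmas~\ref{lem5} and~\ref{lem7}. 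A $[f,f,f,f]$ face (case $(c)$, or a degenerate case $(a)$) is monochromatic, since forward means ``same orientation''; the reflections of $G(P)$ interchange the two colour classes, hence interchange the two $G^+(P)$-orbits of such faces, so $P$ has a single face orbit under $G(P)$ with every face of shape $[f,f,f,f]$, and Lemma~\ref{lem5} yields a contradiction. A face of shape $[a,f,a',f]$ or $[f,a,f,a']$ (case $(f)$) meets the hypothesis of Lemma~\ref{lem7}, whose Petrie dual $Q$ is a bicolor regular polyhedron of index $2$ with \emph{two} face orbits under $G(P)$ and a $[f,f,f,f]$ face; but the monochromatic argument just given forces that same $Q$ to have \emph{one} face orbit, a contradiction.

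For the dodecahedron the argument is cleaner and, I think, the prettiest step. Here $q_S=3$ is odd, so there is no half-turn about a vertex axis and $\rho_v(F)$ simply cannot arise; it remains to exclude the soft directions $sr,sl$. Suppose $F$ is soft-right at $v$. Then its two edges $e,e'$ at $v$ lie two steps apart in the cyclic order of the six edges at $v$ and so in one orbit of the order-$3$ rotation $r$ about the vertex axis, with $r(e)=e'$. An order-$3$ rotation fixing $v$ cannot fix $F$ (it would have to permute the two edges of $F$ at $v$ in a $3$-cycle), so $F,\,r(F),\,r^2(F)$ are three distinct faces, and they successively share the edges $e',\,r(e'),\,e=r^2(e')$. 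These three faces and three edges therefore close up into a cycle of length $3$ in the vertex figure at $v$; but the vertex figure of a polyhedron is a single cycle through all $q_P=6$ edges, so this is impossible. (The point to watch is that $e,e'$ are consecutive in the \emph{combinatorial} vertex figure even though they are geometrically two apart.) Hence no soft direction occurs, completing the proof that $P$ has no even direction.

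With the central fact established the remaining claims drop out. If $\sigma_1(F)\in G_F^+(P)$ then by Lemma~\ref{lem4}$(a)$ the face $F$ is $[a,a,a,a]$ with all directed edges in one orientation, which forces $a$ to be an even direction and is excluded; so $\sigma_1(F)\notin G_F^+(P)$ and case $(a)$ is gone. Case $(e)$ is removed by Lemma~\ref{lem12}, and cases $(c)$ and $(f)$ --- the only ones carrying $\rho_v(F)$ --- have just been eliminated, so $\rho_v(F)\notin G_F^+(P)$. The survivors are $(b)$ and $(d)$, each of which contains $\sigma_1^2(F)$, and since no even direction is used, every pair of consecutive edges has opposite orientation. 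I expect the main obstacle to lie not in any one geometric step but in the bookkeeping that makes the cuboctahedral/icosidodecahedral reduction rigorous: one must verify carefully that a bicolor polyhedron possessing a $[f,f,f,f]$ face really does have a single face orbit under $G(P)$ (so that Lemma~\ref{lem5} is applicable and so that it collides with the two-orbit conclusion of Lemma~\ref{lem7}), and, should this prove awkward, to fall back on checking the short explicit list of admissible edge lengths (cuboctahedron of length $2$; icosidodecahedron of lengths $2,4,2d$) exactly as in Theorems~\ref{thm2} and~\ref{thm3}.
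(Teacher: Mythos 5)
Your proof is correct, but it takes a genuinely different route from the paper's. The paper's argument turns on one global fact: in a bicolor polyhedron no face boundary can consist entirely of edges of one orientation, since otherwise the two $G^{+}(P)$-orbits of faces would share no edges, contradicting the flag-transitivity (connectivity) of the underlying map. That fact kills $\sigma_{1}(F)$ at once; $\rho_{v}(F)$ is excluded because it would force case $(f)$ of Lemma~\ref{lem4}, whereupon Lemma~\ref{lem7} hands the Petrie dual a monochromatic face of shape $[f,f,f,f]$, contradicting the same fact; then $\sigma_{1}^{2}(F)\in G_{F}^{+}(P)$ comes from Lemmas~\ref{lem12} and~\ref{lem4}, and the alternation clause follows because the orientation pattern along the boundary has period two but is not constant. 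You work in the opposite order: you prove the alternation clause first and locally (no face makes an ``even'' turn at any single vertex), then read off the group-theoretic claims. This costs you two steps the paper never needs. First, for the cuboctahedron and icosidodecahedron, a converse to Lemma~\ref{lem4}: a single $f$ at a vertex $v$ forces the half-turn about the $v$-axis to stabilize $F$; your justification (the half-turn swaps the two edges of $F$ at $v$, and a consecutive edge pair determines its face uniquely) is sound. Second, for the dodecahedron, your vertex-figure argument: a soft turn makes the two edges of $F$ at $v$ conjugate under the order-$3$ vertex rotation $r$, so $F$, $r(F)$, $r^{2}(F)$ close into a circuit using only three of the six edges at $v$, which is impossible because the vertex figure of a map is a single $6$-cycle through all of them. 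That argument appears nowhere in the paper and is the most novel part of your proof. Where you do overlap with the paper (eliminating cases $(c)$ and $(f)$ via Lemmas~\ref{lem5} and~\ref{lem7}), you replace its connectivity argument with the observation that reflections swap the two color classes --- also valid, since $G(P)$ is edge-transitive (Lemma~\ref{lem11}) while $G^{+}(P)$ has two edge orbits. The trade-off: the paper's proof is shorter and uniform in $S$; yours is case-split by $S$ but more elementary and geometric, and it establishes the stronger local statement directly, so the fallback you contemplate in your last paragraph is not needed.
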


\begin{proof}
By definition of bicolor type, $P$ has two edge transitivity classes under $G^{+}(P)$, of equal size. The orientation of each edge specifies which transitivity class it belongs to. If the boundary of $F$ (and thus also of every face in the same orbit as $F$) consists only of edges of the same orientation, it would be necessary for $P$ to have two face orbits under $G^{+}(P)$, and for the boundary of every face of $P$ not in the same orbit as $F$ to consist only of edges having the opposite orientation to the edges in the boundary of $F$. This would result in two face orbits with no edges in common, which is not possible as the underlying map of $P$ is flag transitive. So $F$ must have edges of opposite orientation in its boundary. In particular, $G_{F}^{+}(P)$ does not contain $\sigma_{1}(F)$.

Suppose now that $G_{F}^{+}(P)$ contains automorphisms of the form $\rho_{v}(F)$. By Lemma~\ref{lem4}, $S$ must be a cuboctahedron or an icosidodecahedron. In that case, $P$ can not have any face of shape $[f,f,f,f]$, as its boundary would consist only of edges in the same transitivity class by Lemma~\ref{lem10}. So, by Lemma \ref{lem4}, $P$ has one face orbit under $G^{+}(P)$. Thus Lemma \ref{lem7} applies, giving that the Petrie dual, $Q$, of $P$ is also a regular polyhedron of index $2$, with a face, $F'$, of shape $[f,f,f,f]$. $Q$ has the same vertices and edge lengths as $P$, so all the boundary edges of $F'$ have the same orientation, by Lemma~\ref{lem10}. We have just seen that this is not possible, so it follows that $G_{F}^{+}(P)$ does not contain automorphisms of the form $\rho_{v}(F)$. Therefore, by Lemmas \ref{lem12} and \ref{lem4}, $G_{F}^{+}(P)$ contains $\sigma_{1}^{2}(F)$, and since not all edges in the boundary of $F$ have the same orientation, it must be that adjacent edges have opposite orientation.
\end{proof}

Since, by Lemma~\ref{lem13} and Theorems~\ref{thm1}, \ref{thm2}, and \ref{thm3}, $G_{F}^{+}(P)$ contains $\sigma_{1}^{2}(F)$ for all faces $F$ of $P$, we may describe the faces of $P$ by a $2$-symbol shape notation, $[a,b]$. As in \cite{cs}, we use $[a,b]$ to indicate the shape $[a,b,a,b]$. Consequently, if $P$ has bicolor type, then from Lemmas \ref{lem4}, \ref{lem12}, and \ref{lem13} we can set out all possible shapes and edge orientation of faces of $P$. As before, $x$ and $y$ represent opposite orientations. Note that if $P$ has one face orbit under $G^{+}(P)$ and $F$ has shape $[a,a']$, then for any edge, $e$, of $F$, the half-turn $\psi \in {G_{e}}^{+}(P)$ maps $F$ to itself, giving $\psi = \rho_{e}(F)$, which contradicts Lemma~\ref{lem12}.

\begin{table}[htb]
\centering
\begin{tabular}{|c|c|c|c|} \hline
If $G_{F}^{+}(P)$ contains &then $F$ has 	&and boundary & \# face orbits \\
automorphisms of the form\ldots &shape\ldots &orientation pattern &under $G^{+}(P)$\\[.05in]
\hline
\hline
$\sigma_{1}^{2}(F)$ and $\rho_{e}(F)$, &$[a,a']$	&$.x.y.$	&2 \\
but not $\sigma_{1}(F)$ or $\rho_{v}(F)$ &&&\\
\hline
$\sigma_{1}^{2}(F)$, &$[a,b]$	&$.x.y.$ &1 \\
but not $\sigma_{1}(F)$ or $\rho_{e}(F)$ or $\rho_{v}(F)$	&($b\neq a'$) &&\\
\hline
\end{tabular}
\caption{All possible face shapes when $P$ has bicolor type.}
\label{tab:BicolorType}
\end{table}

\begin{theorem}
\label{thm4}
There are exactly two finite regular polyhedra of index $2$ with bicolor type whose vertices coincide with those of a regular dodecahedron. These have edges of length $3$. One of these has type $\{4, 6\}$, with genus = $6$, and is orientable with non-planar faces in one orbit under $G^{+}(S)$, and has shape $[hl,f]$. The other is its Petrie dual with type $\{10, 6\}$, and genus = $30$, and is non-orientable with two kinds of antiprismatic decagons as faces, in two face orbits under $G^{+}(S)$, and has shape $[hl,hr] \& [f,f]$.
\end{theorem}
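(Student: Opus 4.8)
The plan is to follow the pattern of the proofs of Theorems~\ref{thm2} and~\ref{thm3}: first pin down the admissible face shapes from the orientation constraints, then realize the survivors as a Petrie-dual pair and read off their invariants. Since $P$ has bicolor type with $S$ a dodecahedron, the edge length is $3$ and $q_{P}=6$ by Lemma~\ref{lem2}, so a face boundary may turn in one of the five directions $hr, sr, f, sl, hl$ at each vertex. By Lemma~\ref{lem13}, $G_{F}^{+}(P)$ contains $\sigma_{1}^{2}(F)$ but neither $\sigma_{1}(F)$ nor any $\rho_{v}(F)$, and consecutive edges of $F$ have opposite orientation. By Lemma~\ref{lem10}, for a dodecahedron of bicolor type opposite orientation of consecutive edges occurs exactly when the change of direction is $hr$, $f$, or $hl$, so the soft turns $sr, sl$ are excluded. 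Hence every face has a period-two shape $[a,b]=[a,b,a,b]$ with $a,b\in\{hr,f,hl\}$.

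First I would enumerate the candidates using Table~\ref{tab:BicolorType}. For one face orbit under $G^{+}(P)$, Lemma~\ref{lem12} forbids $\rho_{e}(F)$, so $b\neq a$ and $b\neq a'$; using $hr'=hl$, $f'=f$, $hl'=hr$, together with the equivalences from re-choosing the starting edge ($[a,b]\sim[b,a]$) and from passing to the mirror image, the unique surviving one-orbit candidate is $[hl,f]$. For two face orbits the table permits $\rho_{e}(F)$ and forces each orbit to carry a shape $[a,a']$, namely $[hr,hl]$ or $[f,f]$. I would then construct each candidate directly on the dodecahedron with edge length $3$, tracing the prescribed turn sequence to find when the boundary closes, the resulting number of vertices $p_{F}$, and whether it revisits a vertex or self-intersects. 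Here $[hl,f]$ closes after four edges (type $\{4,6\}$, so that $\sigma_{1}^{2}(F)$ has order $2$), while both $[hr,hl]$ and $[f,f]$ close only after ten edges, giving skew decagons with $p_{F}=10$.

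Finally I would use Petrie duality to assemble the survivors and compute their invariants. Since $[hl,f,hl,f]$ has the form $[a,f,b,f]$, Lemma~\ref{lem7} shows that its Petrie dual $Q$ is a regular polyhedron of index $2$ with two face orbits and a face of shape $[f,f,f,f]$; tracing the construction identifies $Q$ as the structure $[hl,hr]\,\&\,[f,f]$ of type $\{10,6\}$, whose two orbits realize the two decagonal candidates found above. Polytopality of $Q$ follows from Lemma~\ref{lem7} once one checks, by inspection of the few possibilities on the dodecahedron, that a Petrie polygon of $[hl,f]$ does not revisit a vertex. Conversely, any bicolor polyhedron on the dodecahedron with two face orbits must combine the shapes $[hr,hl]$ and $[f,f]$ in exactly this way; the other combinations are eliminated because $\rho_{1}$ fails to be well-defined or the faces self-intersect, just as in Appendix~3 of~\cite{cut}, while the combinatorial regularity of the two survivors is confirmed as in Appendix~2 of~\cite{cut}. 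Both have $f_{0}=20$ and $f_{1}=|G^{+}(P)|=60$: for $[hl,f]$, $p=4$ gives $f_{2}=2f_{1}/4=30$, so $\chi=20-60+30=-10$ and the orientable genus is $6$; for $[hl,hr]\,\&\,[f,f]$, $p=10$ gives $f_{2}=2f_{1}/10=12$, so $\chi=20-60+12=-28$ and the non-orientable genus is $30$. Orientability and the non-planarity of the faces then follow directly from the shapes.

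The main obstacle throughout is the geometric bookkeeping on the dodecahedron with edge length $3$: for each candidate shape one must decide closure, the vertex count $p_{F}$, vertex-revisiting, self-intersection, and ultimately the well-definedness of $\rho_{1}$ (i.e. combinatorial regularity). This is precisely the step the paper discharges by explicit construction and by appeal to Appendices~2 and~3 of~\cite{cut}, and it is where all the real case analysis lives; the Petrie-duality argument merely packages two of these constructions into a single pair.
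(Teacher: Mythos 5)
Your overall architecture matches the paper's (orientation constraints via Lemmas~\ref{lem10}--\ref{lem13}, shape enumeration, then Petrie duality to pair the survivors, with combinatorial regularity delegated to Appendix~2 of \cite{cut}), and your Euler-characteristic/genus computations at the end are correct. However, your enumeration of candidate shapes contains a genuine error: the equivalence $[a,b]\sim[b,a]$ ``from re-choosing the starting edge'' is \emph{false} in bicolor type. By Lemma~\ref{lem13} adjacent edges of a face have opposite orientation, and the shape notation requires the starting edge to lie in a fixed directed-edge orbit under $G^{+}(S)$ (the paper's ``left orientation'' convention); shifting the starting edge by one step lands on an edge of the opposite orientation, which is not an admissible starting edge. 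That this identification fails is not a technicality: the paper's proof shows that $[hr,hl]$ is an antiprismatic \emph{hexagon} with $p_{F}=6$, while $[hl,hr]$ is an antiprismatic \emph{decagon} with $p_{F}=10$. Your assertion that ``both $[hr,hl]$ and $[f,f]$ close only after ten edges'' is therefore wrong for one of the two shapes you have conflated, and with it you lose the actual reason the two-orbit candidate is unique, namely that among the three shapes $[hr,hl]$, $[hl,hr]$, $[f,f]$ (with $p_{F}=6,10,10$) only the pairing $[hl,hr]\,\&\,[f,f]$ has matching $p_{F}$.

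The same false equivalence, combined with your unsupported constraint $b\neq a$ (Lemma~\ref{lem12} via Table~\ref{tab:BicolorType} gives only $b\neq a'$), makes your one-orbit enumeration collapse too quickly. The paper has \emph{three} one-orbit candidates, $[hr,hr]$ ($\sim[hl,hl]$ under reversal), $[hl,f]$ ($\sim[f,hr]$), and $[hr,f]$ ($\sim[f,hl]$); note that reversal/mirror sends $[a,b]$ to $[b',a']$, so $[hr,f]$ is \emph{not} equivalent to $[hl,f]$, and $[hr,hr]$ is not excluded by any symmetry consideration since a shape $[a,a,a,a]$ does not force $\sigma_{1}(F)\in G_{F}^{+}(P)$. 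Both of these extra candidates must be killed by substantive arguments, which is exactly what the paper does: $[hl,f]$ and $[hr,f]$ have (by Lemma~\ref{lem7}) polyhedral Petrie duals with two face orbits, and $[hr,hr]$ likewise has a polyhedral Petrie dual because its Petrie polygons, of shape $[hr,hl]$ or $[hl,hr]$ by Lemma~\ref{lem6}, revisit no vertex; since the \emph{only} admissible two-orbit structure is $[hl,hr]\,\&\,[f,f]$, whose Petrie dual is $[hl,f]$, the involutory nature of Petrie duality eliminates $[hr,f]$ and $[hr,hr]$. Your proposal reaches the correct pair of polyhedra, but only because the structures your faulty equivalences discard or merge happen to be precisely the non-polyhedral ones; as written, the case analysis that constitutes the heart of the proof is not sound.
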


\begin{proof}
Let $P$ have bicolor type, and $S$ be a dodecahedron. Thus the edge length of $P$ is $3$. By Lemma~\ref{lem2}, $q_{P} = 6$, so there are five possible directions $(hr, sr, f, sl,$ and $hl)$ in which a face boundary of $P$ may continue at any vertex. Let $F$ be any face of $P$. From Table \ref{tab:BicolorType}, $F$ has shape $[a,b]$, where the boundary of $F$ changes orientation at every vertex, so $a, b \in \{hl, f, hr\}$, by Lemma~\ref{lem10}.

We first find $p_{F}$ for the cases when $P$ has two face orbits under $G^{+}(P)$, so that $F$ has shape $[a,a']$. In order to make the face shape notation precise, we identify a starting edge, by specifying the orientation on $S$. When $S$ is a dodecahedron and the edges of $P$ have length $3$, we say that an edge $\{u,v\}$ of $P$ has left orientation if the edge path $\{u,w,t,v\}$ of $S$ of minimum length connecting $u$ and $v$ bends left at $w$, and has right orientation if the path on $S$ bends to the right at $w$. If the starting edge of $F$ has left orientation, then $[hr,hl]$ represents an antiprismatic hexagonal polygon with its vertices at the vertices of the vertex-figures of a pair of antipodal vertices of $S$, and has $p_{F} = 6$; $[hl,hr]$ represents an antiprismatic decagon with its vertices at the vertices of antipodal faces of $S$, and has $p_{F} = 10$; and $[f,f]$ represents a star-polygon $\{\frac{10}{3}\}$ (also an antiprismatic decagon) inscribed in a Petrie polygon of $S$, and has $p_{F} = 10$.

Thus the only candidate for a polyhedron with two face orbits (and with bicolor type and where $S$ is a dodecahedron) is the structure with shape $[hl,hr] \& [f,f]$.

Referring to Table \ref{tab:BicolorType}, when $P$ has one face orbit under $G^{+}(P)$, $F$ has shape $[a,b]$, where $b \neq a'$. Here $[hl,hl]$, $[f,hr]$, and $[f,hl]$ are the same faces as $[hr,hr]$, $[hl,f]$, and $[hr,f]$ traversed in the opposite direction, so $P$ has three possible shapes. We show that if any of these candidate structures for $P$ are polyhedral, then the Petrie dual, $Q$, is also polyhedral and has two face orbits. This is so for the structures of shape $[hl,f]$ and $[hr,f]$ by Lemma~\ref{lem7}. If $P$ has shape $[hr,hr]$, then by Lemma~\ref{lem6} the Petrie polygons of $P$ have shape $[hr,hl]$ or $[hl,hr]$. By Lemma 3.2 of \cite{cs}, $Q$ is a polyhedron if any of the Petrie polygons of $P$ do not revisit vertices of $P$, and we have just ascertained that this is the case for both $[hr,hl]$ and $[hl,hr]$.

So the only possible index $2$ polyhedra of bicolor type when $S$ is a dodecahedron are $[hl,hr] \& [f,f]$, and its Petrie dual, $[hl,f]$. It is the case that both these structures are polyhedra of index $2$, and the verification of their combinatorial regularity is given in Appendix 2 of \cite{cut}. Diagrams of their representative faces are in the right column of Figure \ref{fig3}.
\end{proof}

\begin{theorem}
\label{thm5}
There are no finite regular polyhedra of index $2$ with bicolor type whose vertices coincide with those of a regular cuboctahedron. There are exactly two finite regular polyhedra of index $2$ with bicolor type whose vertices coincide with those of a regular icosidodecahedron. These have edge length $1+\sqrt{5}$ (= $2d$). One has type $\{6, 4\}$, with genus = $6$, and is orientable with non-planar hexagonal faces in one orbit under $G^{+}(S)$, and has shape $[r,r]$. The other is its Petrie dual with type $\{10, 4\}$, and genus = $20$, and is non-orientable with two kinds of antiprismatic decagons as faces, in two face orbits under $G^{+}(S)$, and has shape $[r,l] \& [l,r]$.
\end{theorem}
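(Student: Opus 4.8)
The plan is to adapt the bicolor case analysis of Theorem~\ref{thm4} to the cuboctahedral and icosidodecahedral settings, following the same pattern of pinning down the face shape from Table~\ref{tab:BicolorType}, computing the closing length $p_F$ of the candidate boundaries, and using Petrie duality to link the one-orbit and two-orbit survivors. I would begin by recording the standing data: with $P$ of bicolor type and $S$ a cuboctahedron or icosidodecahedron, the admissible edge lengths are $2$ for the cuboctahedron and $2$, $4$, or $2d$ for the icosidodecahedron, and by Lemma~\ref{lem2} we have $q_P = 4$, so a face boundary turns only $r$, $f$, or $l$ at each vertex. By Lemma~\ref{lem13}, every face $F$ has $\sigma_1^2(F) \in G_F^+(P)$, has shape $[a,b]$ (meaning $[a,b,a,b]$), and has adjacent boundary edges of opposite orientation. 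By Lemma~\ref{lem10}, in bicolor type only the turns $r$ and $l$ reverse orientation while $f$ preserves it, so $a,b \in \{r,l\}$. Consulting Table~\ref{tab:BicolorType} then leaves exactly two essentially distinct candidates: the one-orbit shape $[r,r]$ (with $[l,l]$ its reverse traversal, since $r' = l$) and the two-orbit mirror pair $[r,l] \& [l,r]$ arising from the row $[a,a']$.

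The geometric heart of the argument, and the main obstacle, is the next step: for each $S$ and each admissible edge length I would compute $p_F$ for the $[r,r,r,r]$-boundary and check whether it closes into a simple polygon, i.e.\ one that neither revisits a vertex nor self-intersects. This is the analogue of the length-by-length computations in Theorems~\ref{thm2} and~\ref{thm3}. I expect that on the icosidodecahedron with edge length $2d$ the $[r,r]$ boundary closes as a skew hexagon ($p_F = 6$), whereas on the cuboctahedron (length $2$) and on the icosidodecahedron with lengths $2$ or $4$ the candidate boundaries either self-intersect, revisit a vertex, or yield a map on which $\rho_1$ is not well defined, exactly as happened to the discarded structures in Theorem~\ref{thm1}; the explicit diagrams of Appendix~3 of~\cite{cut} carry out these eliminations and in particular dispose of the cuboctahedron entirely. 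This leaves the icosidodecahedron at edge length $2d$ as the only surviving case.

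With the one-orbit survivor $[r,r]$ in hand, I would produce its partner by Petrie duality. By Lemma~\ref{lem6} the Petrie polygons of a polyhedron whose faces have shape $[r,r,r,r]$ have shape $[r,l,r,l]$ or $[l,r,l,r]$; I would check, by the same direct inspection used in Theorems~\ref{thm3} and~\ref{thm4}, that such a polygon (at edge length $2d$) does not revisit a vertex, so that by Lemma 3.2 of~\cite{cs} the Petrie dual $Q$ is a regular polyhedron of index $2$. Since $Q$ shares the vertices and edge length $2d$ of $P$ it is again of bicolor type; its faces have shape $[r,l] \& [l,r]$ and, lying in the two-orbit row of Table~\ref{tab:BicolorType}, it has two face orbits under $G^+(S)$ with $p_F = 10$. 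Conversely, the only two-orbit bicolor candidate is precisely this $Q$, so the two structures form a single Petrie-dual pair and there are no others; combinatorial regularity of both is the content of Appendix~2 of~\cite{cut}.

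Finally I would assemble the stated invariants. Both polyhedra have $f_0 = 30$ and, since $q_P = 4$, $f_1 = 60$. The $[r,r]$ polyhedron is of type $\{6,4\}$, so $f_2 = 2f_1/6 = 20$, giving Euler characteristic $30 - 60 + 20 = -10$ and, being orientable, genus $6$; its Petrie dual is of type $\{10,4\}$, so $f_2 = 2f_1/10 = 12$, giving Euler characteristic $-18$ and, being non-orientable, genus $20$, its two face orbits under $G^+(S)$ being mirror-image antiprismatic decagons. Representative faces appear in the right column of Figure~\ref{fig4}. The one genuinely delicate point throughout is the middle paragraph: it is the explicit $S$-by-$S$, length-by-length determination of $p_F$ and of the simplicity of the resulting polygon that simultaneously eliminates the cuboctahedron and the icosidodecahedral lengths $2$ and $4$ and fixes $p_F = 6$ and $p_F = 10$ for the two admissible faces.
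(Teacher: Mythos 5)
Your reduction to the two candidate shapes $[r,r]$ (one orbit) and $[r,l]\,\&\,[l,r]$ (two orbits) is sound and matches the paper, as does your final assembly of types, genera, and face counts. But the core eliminations invert the paper's logic in a way that leaves two genuine gaps. First, your disposal of the cuboctahedron and of the icosidodecahedral edge lengths $2$ and $4$ rests on the expectation that the $[r,r]$ boundary there self-intersects, revisits a vertex, or fails to admit a well-defined $\rho_{1}$, with verification deferred to the diagrams of Appendix~3 of \cite{cut}. That appendix concerns the unequal-edge-length structures of Theorem~\ref{thm1}, not these cases, so the citation does not supply the missing work; moreover nothing you say (or that the paper proves) forces the $[r,r]$ polygon at those lengths to be non-simple --- the obstruction the paper identifies is combinatorial, not geometric. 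The paper's actual elimination runs through the \emph{two-orbit} candidates: for each $S$ and admissible edge length it computes $p_{F}$ for both shapes $[r,l]$ and $[l,r]$ (antiprismatic hexagon vs.\ nonplanar square, $6$ vs.\ $4$, on the cuboctahedron at length $2$; decagon vs.\ hexagon, $10$ vs.\ $6$, on the icosidodecahedron at lengths $2$ and $4$; decagon and decagon, $10$ and $10$, at length $2d$), and since all faces of a combinatorially regular polyhedron have the same number of vertices, only the length-$2d$ case survives. The one-orbit case then follows for free: any $[r,r]$ polyhedron, at \emph{any} allowable length, has Petrie polygons of shape $[r,l]$ or $[l,r]$, which were seen in those computations never to revisit a vertex, so by Lemma 3.2 of \cite{cs} its Petrie dual is a polyhedron, necessarily of bicolor type with two face orbits; uniqueness of the two-orbit polyhedron then pins the one-orbit polyhedron to the icosidodecahedron at length $2d$.

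Second, and relatedly, your sentence ``Conversely, the only two-orbit bicolor candidate is precisely this $Q$'' is asserted, not proved: you never perform the $p_{F}$ comparisons for $[r,l]$ and $[l,r]$ at the other lengths, and the claim does not follow from your (conjectured) one-orbit elimination either, since the Petrie dual of a hypothetical two-orbit polyhedron at another length is only guaranteed to be a polyhedron if its Petrie polygons (of shape $[r,r]$ or $[l,l]$) avoid revisiting vertices, which you have not examined. As written the proposal is therefore circular: the one-orbit elimination is an unverified expectation, and the two-orbit elimination is deduced from it by fiat. The repair is to restore the paper's order of argument: carry out the explicit two-orbit $p_{F}$ comparison first (this, not any appendix diagram, is what disposes of the cuboctahedron entirely), and then obtain the one-orbit case from it by the Petrie-duality argument.
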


\begin{proof}
Let $P$ have bicolor type, and $S$ be a cuboctahedron or an icosidodecahedron. Then $q_{P} = 4$, and there are three possible directions $(r, f,$ and $l)$ in which a face boundary of $P$ may continue at any vertex. Let $F$ be a face of $P$.

Suppose first that $P$ has two face orbits under $G^{+}(P)$. By Lemmas \ref{lem10} and \ref{lem13}, no face of $P$ has shape $[f,f]$. Thus, from Table \ref{tab:BicolorType}, we have that the only possible shapes for faces of $P$ are $[r,l]$, and $[l,r]$. We now calculate $p_{F}$ for these face shapes. We require that the faces in each orbit have the same value of $p_{F}$.

We specify a starting edge for the face shape by distinguishing between the two forms of orientation on $P$. We assume that the starting edge of $F$ is a directed edge $\{u,v\}$ of $P$ such that in the edge or face-diagonal path $\{u,w,\ldots,v\}$ of $S$ of minimum length connecting $u$ and $v$, the triangle face of $S$ adjoining $\{u,w\}$ (or the centroid of the pentagon face of $S$ traversed by $\{u,w\}$, if $P$ has edge length $2d$) is to the left of $\{u,w\}$. With this convention, $P$ is fully determined by its face shape, vertex position, and edge length.

If $S$ is a cuboctahedron and $P$ has edge length $2$, then a face with shape $[r,l]$ is an antiprismatic hexagon with vertices in two opposite triangular faces of $S$, and has $p_{F} = 6$; and a face with shape $[l,r]$ is a nonplanar square with its vertices at opposite square faces of $S$ and its diagonals given by diagonals of those square faces, and has $p_{F} = 4$.

If $S$ is an icosidodecahedron and $P$ has edge length $2$, then a face with shape $[r,l]$ is an antiprismatic decagon positioned along a decagon circumference of $S$, and has $p_{F} = 10$; and a face with shape $[l,r]$ is an antiprismatic hexagon positioned along a circumference of $S$ parallel to a triangle face of $S$, and has $p_{F} = 6$.

If $S$ is an icosidodecahedron and $P$ has edge length $4$, then a face with shape $[l,r]$ is an antiprismatic decagon (over a pentagram) positioned along a decagonal circumference of $S$ and with its vertices in the two pentagonal faces of $S$ parallel to the circumference, and has $p_{F} = 10$; and a face with shape $[r,l]$ is an antiprismatic hexagon with its vertices in two opposite triangular faces of $S$, and has $p_{F} = 6$.

Finally, when $S$ is an icosidodecahedron and $P$ has edge length $2d$, then a face with shape $[r,l]$ is an antiprismatic decagon positioned along a circumference of $S$ parallel to a pentagon face and with its vertices at opposite pentagon faces of $S$, and has $p_{F} = 10$; and a face with shape $[l,r]$ is an antiprismatic decagon (over a pentagram) positioned along a circumference of $S$ parallel to a pentagon face, and has $p_{F} = 10$.

It follows that if $S$ is a cuboctahedron or an icosidodecahedron, and $P$ has bicolor type with two face orbits under $G^{+}(P)$, then the only candidate for $P$ is the structure with shape $[r,l] \& [l,r]$ and edge length $2d$, with $S$ an icosidodecahedron.

If $P$ has one face orbit under $G^{+}(P)$, then, from Table \ref{tab:BicolorType}, $F$ has shape $[a,b]$, where $b \neq a'$. Further, $F$ does not have shape $[a,f]$ or $[f,a]$, by Lemma~\ref{lem7}. Thus the shape of $F$ can only be $[r,r]$ or $[l,l]$, which are the same shape traversed in opposite directions. So $F$ must have shape $[r,r]$. We now find allowable edge lengths of $P$ by showing that if $P$ has shape $[r,r]$, for any given edge length, then the Petrie-dual, $Q$, of $P$ is also a regular polyhedron of index $2$. By Lemma 3.2 of \cite{cs}, it is sufficient to show that a Petrie polygon of $P$ does not revisit a vertex. From Lemma~\ref{lem6} any Petrie polygon of $P$ has shape $[r,l]$ or $[l,r]$, and we have just checked all such polygons for all allowable edge lengths and found that none of them revisit a vertex. Thus $Q$ is a regular polyhedron of index $2$ with bicolor type. Since $P$ is the only candidate for such a polyhedron with one face orbit under $G^{+}(P)$, $Q$ must have two face orbits, and we have already found that there is only one possible polyhedron with two face orbits.

Thus we have that the only two possible regular polyhedra of index $2$ occur when $S$ is an icosidodecahedron and $P$ has edge length $2d$, and they have shapes $[r,l] \& [l,r]$ and $[r,r]$. The latter shape gives skew hexagonal faces, with three vertices at vertices of triangle faces of $S$. It is the case that both these structures are regular polyhedra of index $2$, and the verification of their combinatorial regularity is given in Appendix 2 of \cite{cut}. Diagrams of their representative faces are in the right column of Figure \ref{fig4}.
\end{proof}

Thus there are precisely $10$ finite regular polyhedra of index $2$ with vertices on one orbit under the full symmetry group. These polyhedra all have icosahedral symmetry, and all the polyhedra have faces such that the squares of the combinatorial rotation about the face lies in the geometric rotation group (that is, $\sigma_{1}^{2}(F) \in G^{+}(P)$, for all $F$). Three of the polyhedra have planar faces.

The underlying maps of these polyhedra, and verification of their regularity, are in Appendix 2 of \cite{cut}.
 
Table \ref{tab:OneOrbit} lists these polyhedra in Petrie pairs in the order in which they occur in this paper. Recall that a polyhedron of type $\{p, q\}_{r}$ has a Petrie dual of type $\{r, q\}_{p}$. We see that each Petrie pair consists of one orientable polyhedron and one non-orientable polyhedron; one of these has one face orbit under $G^{+}(P)$, and the other has two face orbits under $G^{+}(P)$.

\begin{table}[htb]
\centering
\begin{tabular}{|c|c|c|c|c|c|c|} \hline
Type &Face Vector &Edge &Shape of $P$	&Map of &Figure	&Notes \\
$\{p, q\}_{r}$	&$(f_{0}, f_{1}, f_{2})$ &length	&& \cite{con} &&\\[.05in]
\hline
\hline
$\{6, 6\}_{6}$	&$(20, 60, 20)$	&$1, 4$	&$[r,r]$	&$R11.5$	&\ref{fig2}	&Planar faces; \\
&&&&&&self-dual map \\
\hline
$\{6, 6\}_{6}$	&$(20, 60, 20)$	&$1, 4$	&$[r,l] \& [l,r]$	&$N22.3$	&\ref{fig2}	&One face orbit \\
&&&&&&under $G(P)$ \\
\hline
$\{4,6\}_{5}$	&$(20, 60, 30)$	&$2$	&$[hl,f]$	&$N12.1$	&\ref{fig3} & \\
\hline
$\{5,6\}_{4}$	&$(20, 60, 24)$	&$2$	&$[f,f] \& [hl,hl]$	&$R9.16$	&\ref{fig3}	&Planar faces \\
\hline
$\{6,4\}_{5}$	&$(30, 60, 20)$	&$d$	&$[r,l]$	&$N12.1^*$	&\ref{fig4} & \\
\hline
$\{5,4\}_{6}$	&$(30, 60, 24)$	&$d$	&$[r,r] \& [l,l]$	&$R4.2^*$	&\ref{fig4}	&Planar faces \\
\hline
$\{4,6\}_{10}$	&$(20, 60, 30)$	&$3$	&$[hl,f]$	&$R6.2$	&\ref{fig3} & \\
\hline
$\{10,6\}_{4}$	&$(20, 60, 12)$	&$3$	&$[f,f] \& [hl,hr]$	&$N30.11^*$	&\ref{fig3} & \\
\hline
$\{6,4\}_{10}$	&$(30, 60, 20)$	&$2d$	&$[r,r]$	&$R6.2^*$	&\ref{fig4} & \\
\hline
$\{10,4\}_{6}$	&$(30, 60, 12)$	&$2d$	&$[r,l] \& [l,r]$	&$N20.1^*$	&\ref{fig4} & \\
\hline
\end{tabular}
\caption{The finite regular polyhedra of index $2$ with vertices on one orbit.}
\label{tab:OneOrbit}
\end{table}

Note that in the Conder, Dobesanyi~\cite{codo} and Conder \cite{con} classification of regular maps, `$R$' indicates an orientable, non-chiral (i.e., reflexible) map and `$N$' indicates a non-orientable map. The number before the period is the genus of the map, and an asterisk indicates the dual of the map. Thus $N30.11^{*}$ is the dual of the $11^{th}$ non-orientable map of genus $30$ in the listing. 

The two polyhedra of type $\{6, 6\}_{6}$ are neither combinatorially isomorphic, nor dual to each other: one is orientable, the other non-orientable. However the orientable polyhedron of type $\{6, 6\}_{6}$ is combinatorially self-dual. Moreover, the polyhedra of types $\{6, 4\}_{5}$ and $\{4, 6\}_{5}$ are combinatorially dual, as are the polyhedra of types $\{6, 4\}_{10}$ and $\{4, 6\}_{10}$. Finally, the two orientable polyhedra with $p = 5$, of type $\{5, 4\}_{6}$ and $\{5, 6\}_{4}$ (which both have planar faces) are combinatorially dual to the orientable polyhedra, of type $\{4, 5\}_{6}$ and $\{6, 5\}_{4}$ respectively, with vertices on two orbits described in \cite{cs}.

Figures \ref{fig2}--\ref{fig4} show a representative face for each face orbit under $G(P)$ of these polyhedra. Polyhedra in the same vertical column are Petrie-duals. Faces in the same horizontal row are related by the duality described in the next section. In this manner the ten polyhedra fall naturally into three sets.

Taken in conjunction with \cite{cs}, this completes the classification of all regular finite polyhedra of index $2$.

\begin{figure} [hp]
\centering
\includegraphics[trim = 50mm 55mm 45mm 45mm, clip, scale = 0.8]{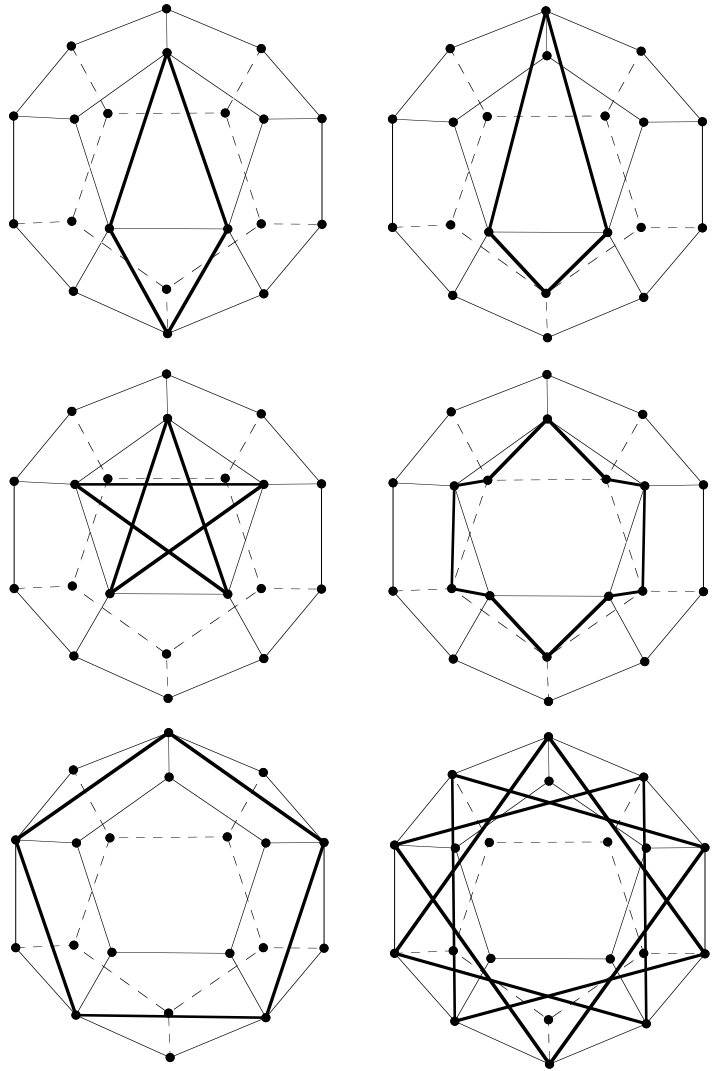}\\
\caption{The six represntative faces of the four polyhedra with edges of equal length and vertices coinciding with those of a dodecahedron. Those in the left column have directed type, and those in the right column have bicolor type. The top left has type $\{4,6\}_{5}$ and shape $[hl,f]$ and is non-orientable. Below it are the two face orbits of its Petrie-dual of type $\{5,6\}_{4}$ and shape $[hl,hl]\&[f,f]$, which is orientable. The top right has type $\{4,6\}_{10}$ and shape $[hl,f]$ and is orientable. Below it are the two face orbits of its Petrie-dual of type $\{10,6\}_{4}$ and shape $[hl,hr]\&[f,f]$, which is non-orientable.}
\label{fig3}
\end{figure}

\begin{figure} [hp]
\centering
\includegraphics[trim = 40mm 50mm 44mm 40mm, clip, scale = 0.8]{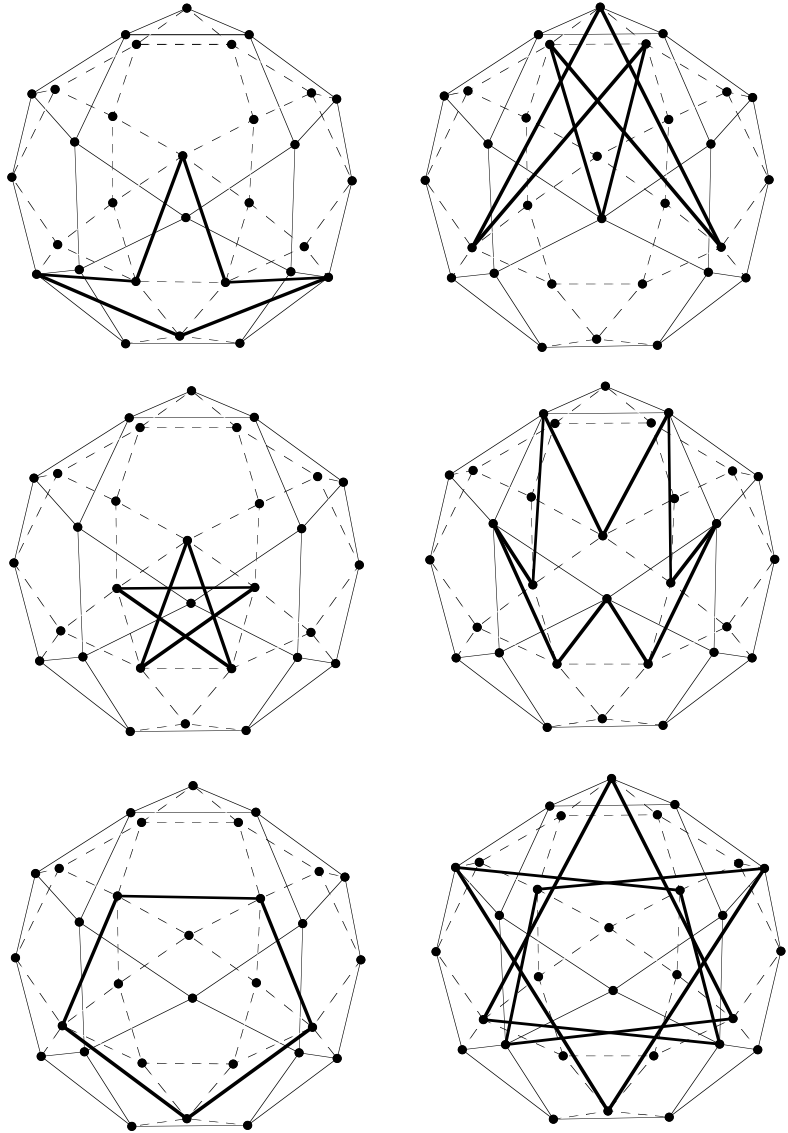}\\
\caption{The six represntative faces of the four polyhedra with edges of equal length and vertices coinciding with those of an icosidodecahedron. Those in the left column have directed type, and those in the right column have bicolor type. The top left has type $\{6,4\}_{5}$ and shape $[r,l]$ and is non-orientable. Below it are the two face orbits of its Petrie-dual of type $\{5,4\}_{6}$ and shape $[r,r]\&[l,l]$, which is orientable. The top right has type $\{6,4\}_{10}$ and shape $[r,r]$ and is orientable. Below it are the two face orbits of its Petrie-dual of type $\{10,4\}_{6}$ and shape $[r,l]\&[l,r]$, which is non-orientable.}
\label{fig4}
\end{figure}

\section{A Non-Petrie Duality}
\label{npdua}

Each pair of faces in any horizontal row of Figures \ref{fig2}--\ref{fig4} (or in any vertical column of Figures 2--7 of \cite{cs}) is related in the following manner: given a regular polyhedron $P$, with a face $F$ having boundary $\{v_{1}, v_{2}, v_{3}, v_{4}, v_{5}\ldots\}$, we can construct a corresponding polygon, which we designate $C(F)$, to have boundary $\{v_{1}, v'_{2}, v_{3}, v'_{4}, v_{5}\ldots\}$, where $v'_{i}$ is the vertex opposite $v_{i}$. If $P$ is centrally symmetric, then $v'_{i}$ is a vertex of $P$.

By choosing a different starting vertex, a different copy of $C(F)$ can be obtained that is a point reflection of the first through the center of $P$. Thus $C(F)$ is unique up to the action of $G(P)$. If $F$ has shape $[a,b,c,d]$, then $C(F)$ has shape $[a,b',c,d']$, for, as shown in Figure \ref{fig5}, the three pairs of consecutive projected edges $\{u, w, u'\}$ are half circumferences of the sphere of projection, so that the change of direction along $\{v_{1}, v'_{2}, v_{3}\}$ is opposite to that along $\{v_{1}, v_{2}, v_{3}\}$, whereas the change of direction along $\{v'_{2}, v_{3}, v'_{4}\}$ is the same as that along $\{v_{2}, v_{3}, v_{4}\}$.

\begin{figure} [ht]
\centering
\includegraphics[trim = 55mm 105mm 60mm 100mm, clip, scale = 0.6]{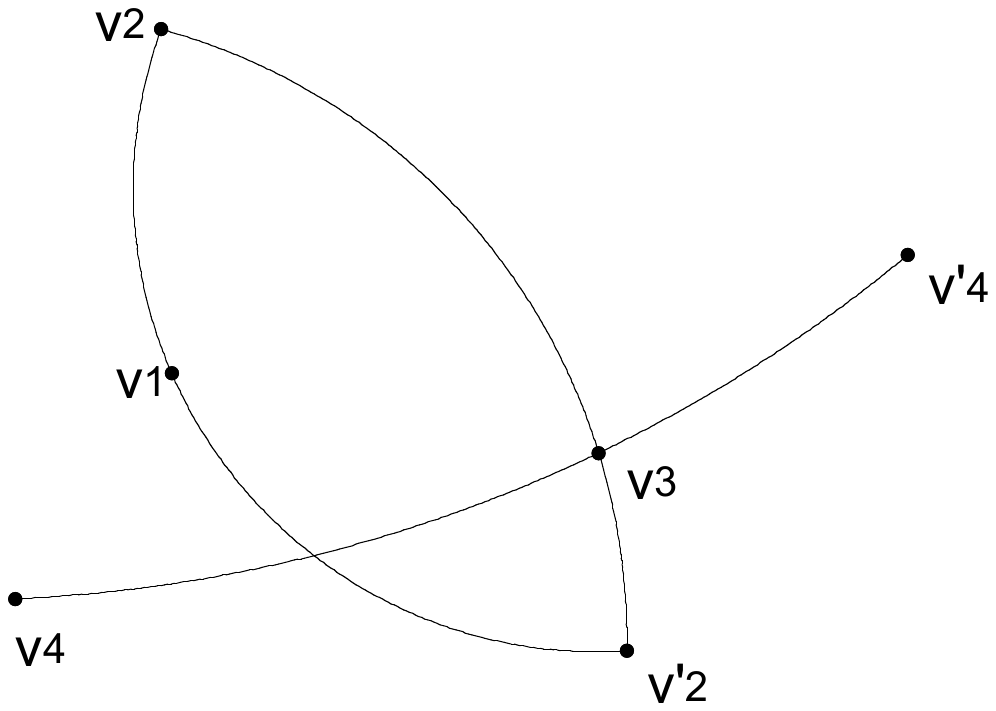}\\
\caption{}
\label{fig5}
\end{figure}

Given $P$, we form $C(P)$ similarly, with the additional requirement that if an edge, $e$, of $P$ subtends two faces $F$ and $F'$ then the boundaries of $C(F)$ and $C(F')$ will share an edge of $C(P)$ corresponding to $e$. This construction gives two possibilities for $C(P)$, but we do not distinguish between them as they are geometrically equivalent, one being a point reflection of the other about the center of $P$. Thus $C(P)$ is well-defined, modulo $G(P)$.

If $P$ has type $\{p, q\}$ then $C(P)$ will have type $\{p_{C}, q\}$, where $p_{C} = 2p, p,$ or $p/2$ respectively, according to whether $p$ is odd; or $p$ is even and no face boundary of $P$ contains two opposite vertices; or $p$ is even and a face boundary of $P$ contains opposite vertices. For further details, see \cite{cut}.

Since the sum of lengths of corresponding edges of $P$ and $C(P)$ is the distance between opposite vertices, we have that if $P$ has directed type then $C(P)$ has bicolor type, and vice versa.

Clearly $C(C(P)) = P$ (up to a point reflection of $P$) so that $C(P)$ is a duality, which in general differs from Petrie duality and always differs from vertex/face duality. All regular polyhedra of index $2$ have a corresponding $C(P)$-dual, as well as a Petrie-dual, and so the regular polyhedra of index $2$ can be put in nine symmetry groupings, as is shown in Figures \ref{fig2}--\ref{fig4}, together with Figures 2--7 of \cite{cs}.\\

\begin{acknowledgement}
I am very grateful to Egon Schulte for his detailed and insightful appraisals during the writing of \cite{cut}. Many of his comments have been incorporated into this paper.
\end{acknowledgement}


\begin{thebibliography}{999}

\bibitem{bru} M.Br\"uckner, {\em Vielecke und Vielflache\/}, Leipzig, 1900.

\bibitem{con} M.Conder, http://www.math.auckland.ac.nz/~conder

\bibitem{codo} M.Conder and P.Dobesanyi, {\em Determination of all regular maps of small genus\/}, J. Comb. Theory, Series B, 81 (2001), 224--242.

\bibitem{cox} H.S.M.Coxeter, {\em Regular Polytopes\/} (3rd edition),  Dover
(New York, 1973).

\bibitem{crsp} H.S.M.Coxeter, {\em Regular skew polyhedra in $3$ and $4$ dimensions and their topological analogues\/},  Proc.\ Lond. Math.\ Soc. (2) 43 (1937), 33--62.  (Reprinted with amendments in {\em Twelve Geometric Essays\/}, Southern Illinois University Press (Carbondale, 1968), 76--105.)

\bibitem{cm} H.S.M.Coxeter and W.O.J.Moser, {\em Generators and Relations for Discrete Groups}, 4th editon, Springer, Berlin, 1980.

\bibitem{cdfp} H.S.M.Coxeter, P.DuVal, H.T.Flather and J.F.Petrie, {\em The $59$ Icosahedra\/}, University of Toronto Studies, Math. Series no. 6, University of Toronto, Toronto, 1938.

\bibitem{clm} H.S.M.Coxeter, M.S.Longuet-Higgins and J.C.P.Miller, {\em Uniform polyhedra\/}, Philos. Trans. R. Soc. Lond. Series A  246 (1954), 401--450.

\bibitem{cut} A.M.Cutler, {\em Regular polyhedra of index $2$\/}, Ph.D. Thesis, Northeastern University (Boston, 2009). [http://iris.lib.neu.edu/math\_diss/2]

\bibitem{cs} A.M.Cutler and E.Schulte, {\em Regular polyhedra of index two, I\/}, to appear in Beiträge Algebra Geom. 52 (2011).

\bibitem{d1} A.W.M.Dress, {\em A combinatorial theory of Gr\"unbaum's new regular polyhedra, I:  Gr\"unbaum's new regular polyhedra and their automorphism group\/}, Aequationes Math.\  23 (1981), 252--265.

\bibitem{d2} A.W.M.Dress, {\em A combinatorial theory of Gr\"unbaum's new regular polyhedra, II:  complete enumeration\/}, Aequationes Math.\ 29 (1985), 222--243.

\bibitem{far}	S.L.Farris, {\em Completely classifying vertex-transitive and edge-transitive polyhedra\/}, Geom. Dedicata 26 (1988), 111--124.

\bibitem{gar}	D.Garbe, {\em \"Uber die regularen Zerlegungen geschlossener orientierbarer Flachen\/}, J. Reine Angew. Math. 237, 1969, 39--55

\bibitem{gor} P.Gordan, {\em \"Uber die Aufl\"osung der Gleichungen vom f\"unften Grade\/}, Math.\ Ann. 13 (1878), 375--404.

\bibitem{grbe} L.C.Grove and C.T.Benson, {\em Finite Reflection Groups\/} (2nd edition), Graduate Texts in Mathematics, Springer-Verlag (New York-Heidelberg-Berlin-Tokyo, 1985).

\bibitem{gr1} B.Gr\"unbaum, {\em Regular polyhedra --- old and new\/}, Aequationes\ Math.\ {16} (1977), 1--20.

\bibitem{grhol} B.Gr\"unbaum, {\em Polyhedra with hollow faces\/}, In {\em
Polytopes:  Abstract, Convex and Computational\/} (eds.\ T.~Bisztriczky,
P.~McMullen, R.~Schneider and A.~Ivi'c Weiss), NATO ASI Series C 440,
Kluwer (Dordrecht etc., 1994), 43--70.

\bibitem{grsame} B.Gr\"unbaum, {\em Are your polyhedra the same as my polyhedra?\/}, In {\em Discrete \& Comput. Geom.\/}, 461--488,  Algorithms Combin., 25, Springer, Berlin, 2003. 

\bibitem {gsh} B.Gr\"unbaum and G.C.Shephard, {\em Duality of polyhedra\/}, In {\em Shaping Space --- A polyhedral approach\/}, (G.Fleck and M.Senechal, eds.), Birkh\"auser, Boston, 1988, pp. 205--211.

\bibitem{mar1} H.Martini,  {\em Hierarchical classification of euclidean polytopes with regularity properties\/},  In {\em Polytopes:  Abstract, Convex and Computational\/} (eds.\ T.Bisztriczky, P.McMullen, R.Schneider and A.Ivi'c Weiss), NATO ASI
Series C {\bf 440}, Kluwer (Dordrecht etc., 1994), 71--96.

\bibitem{mar2} H.Martini, {\em Regul\"are Polytope und Verallgemeinerungen\/}, In {\em Geometrie und ihre Anwendungen\/} (eds. O. Giering and J. Hoschek), Verlag Carl Hanser (Munich, Vienna, 1994), 247--281.

\bibitem{pm1} P.McMullen, {\em Four-dimensional regular polyhedra\/}, Discrete Comput.\ Geom. 38 (2007), 355--387.

\bibitem{pm} P.McMullen, {\em Regular polytopes of full rank\/}, Discrete Comput.\ Geom. 32 (2004), 1--35.

\bibitem{pm2} P.McMullen, {\em Regular apeirotopes of dimension and rank $4$\/}, Discrete Comput.\ Geom. 42 (2009), 224--260.

\bibitem{ms1} P.McMullen and E.Schulte, {\em Regular polytopes in ordinary space\/}, Discrete Comput. Geom. 17 (1997), 449--478.

\bibitem{arp} P. McMullen and E. Schulte, \emph{Abstract Regular Polytopes}, Encyclopedia of Mathematics and its Applications, Vol. 92, Cambridge University Press, Cambridge, UK, 2002.

\bibitem{ms3} P.McMullen and E.Schulte, {\em Regular and chiral polytopes in low dimensions\/}, In {\em The Coxeter Legacy --- Reflections and Projections\/} (eds. C.Davis and E.W.Ellers), Fields Institute Communications, Volume 48, American Mathematical Society (Providence, RI, 2006), 87--106.

\bibitem{mw5} B.R.Monson and A.I.Weiss, {\em Realizations of regular toroidal maps\/}, Canad.\ J.\ Math. (6) 51 (1999), 1240--1257.

\bibitem{ps} D.Pellicer and E.Schulte, {\em Regular polygonal complexes in space, I}, Trans. Amer. Math. Soc. (to appear).

\bibitem{rich} D.A.Richter, {\em The regular polyhedra of index two\/},\\
http://homepages.wmich.edu/~drichter/regularpolyhedra.htm

\bibitem{chi1}
E.Schulte, {\em Chiral polyhedra in ordinary space, I\/}, Discrete Comput. Geom. 32
(2004), 55--99.

\bibitem{chi2}
E.Schulte, {\em Chiral polyhedra in ordinary space, II\/}, Discrete Comput. Geom. 34
(2005), 181--229.

\bibitem{sw} E.Schulte and J.M.Wills, {\em Combinatorially regular polyhedra in three-space\/}, In {\em Symmetry of Discrete Mathematical Structures and Their Symmetry Groups\/} (eds.\  K.H.Hofmann and R.Wille), Res. Exp. Math. 15, Heldermann (Berlin, 1991), 49--88.

\bibitem{she} F.A.Sherk, {\em The regular maps on a surface of genus three\/}, Canad. J. Math 11, (1959), 452--480

\bibitem{wills} J.M.Wills, {\em Combinatorially regular polyhedra of index $2$\/}, Aequationes Math. 34 (1987), 206--220.

\end{thebibliography}
\end{document}